\theoremstyle{plain}
\newtheorem{rem}{Remark}
\newtheorem{lem}{Lemma}
\newtheorem{thm}{Theorem}
\newtheorem{nota}{Notation}
\numberwithin{equation}{section}
\newcommand{\A}{\mathcal{A}}
\newcommand{\R}{\mathbb{R}}
\numberwithin{equation}{section}
\numberwithin{equation}{section}
\numberwithin{equation}{section}
\numberwithin{equation}{section}
\numberwithin{equation}{section}
\numberwithin{equation}{section}
\numberwithin{equation}{section}
\numberwithin{equation}{section}
\numberwithin{equation}{section}
\numberwithin{equation}{section}
\numberwithin{equation}{section}
\numberwithin{equation}{section} \setcounter{page}{1}
\def\ep{\varepsilon}
\begin{document} 
\title[Error bounds for the asymptotic expansions of the Jacobi polynomials]{Error bounds for the asymptotic expansions of the Jacobi polynomials}
\thanks{ Research partially supported by  Guangdong Basic and Applied Basic Research Foundation No. 2021A1515110654(Huang).}

\author{Xiao-Min Huang}%{$^\diamond $}\\
\address{School of Mathematics and statistics, 
Guangdong University of Technology, China}
\email{\href{mahuangxm@gdut.edu.cn}{mahuangxm@gdut.edu.cn}}

\author{Yu Lin$^*$}
\address{Department of Mathematics,
South China University of Technology, China}
\email{\href{scyulin@scut.edu.cn}{scyulin@scut.edu.cn}}

\author{Xiang-Sheng Wang}
\address{Department of Mathematics, University of Louisiana at Lafayette, Lafayette, LA 70503, USA}
\email{\href{xswang@louisiana.edu}{xswang@louisiana.edu}}

\author{R. Wong}
\address{Liu Bie Ju Centre for Mathematical Sciences,
City University of Hong Kong, Hong Kong,
China}
\email{\href{rodscwong@gmail.com}{rodscwong@gmail.com}}

\allowdisplaybreaks

\begin{abstract}
 This paper aims to derive explicit and computable error bounds for the asymptotic expansion of the Jacobi polynomials as their degree approaches infinity, using an integral method. The analysis focuses on the outer or oscillatory region of these polynomials. A novel technique is introduced to address the challenges posed by the logarithmic singularity in the phase function of the integral representation of Jacobi polynomials. A recurrence formula is also developed to compute the coefficients in the asymptotic expansions.

\end{abstract}

\maketitle{}

\textbf{Key words:} Error bounds; asymptotic expansions; Jacobi polynomials

{\textbf{AMS subject classification:} Primary: 41A60; 33C45 }

\section{Introduction}
The study of the asymptotic expansions of Jacobi polynomials as the polynomial degree approaches infinity has garnered considerable attention in the literature \cite{CI91,KMVV04,KV99,Szego79,WZ96,WZ03}. Various standard asymptotic techniques have been used to investigate these expansions, including the steepest descent method for integrals \cite{Wong89}, the WKB method tailored for differential equations \cite{Olver74}, the Deift–Zhou strategy for Riemann–Hilbert problems \cite{DKMVZ99,DZ93,KMVV04}, the asymptotic theory of difference equations \cite{KV99,Wong14}, and Darboux's approach \cite{WZ05}. Notably, Wong and Zhang, in \cite{WZ96}, provided an estimate for the error term in the asymptotic expansions of Jacobi polynomials that is of the same order as the first neglected term. In \cite{WZ03}, Wong and Zhao introduced an alternative approach to address a deficiency in the behavior of these expansions near the origin.   However, while these works provide valuable theoretical insights, fall short of offering explicit and computable error bounds.  
In \cite{Olver80}, Olver demonstrated that well-constructed error bounds for asymptotic
approximations can offer invaluable insight into the nature and reliability of the approximations,
enable somewhat unsatisfactory concepts and lead to significant extensions of asymptotic results. He urged asymptotic analysts to peer intently at the error term with the following reasons: (i) To legitimize computation.
(ii) To provide analytical insight.
(iii) To avoid unsatisfactory concepts.
(iv) To facilitate extensions. 
As pointed out in \cite{Wong80}, it is challenging to calculate the upper limits of error terms obtained through conventional asymptotic techniques.
 To address this issue, we require fresh insights and more robust tools. By exploring these new tools, we will enhance their practicality for certain applications.

In this paper, we endeavor to derive explicit and feasible error bounds for the asymptotic expansions of the Jacobi polynomials. The primary hurdle lies in identifying an appropriate contour for the integral representation of the error term that allows for a computable error bound. The ‘adjacent saddles’ method, introduced by Berry and Howls \cite{BH91} and further refined in \cite{BHNOD18,B93}, suggests expressing the error term as an integral on steepest descent contours of neighboring saddle points. However, this technique is not directly applicable to Jacobi polynomials and numerous other orthogonal polynomials because of the presence of a branch point in the integrand. 
To overcome these obstacles, the authors of \cite{SNWW23} propose a ‘branch cut’ technique to study the Hermite polynomials, which involves modifying the contour of integration for the error term to align with the branch cut of the phase function. When studying the outer interval of the Jacobi polynomials, we encounter a challenge in the deformation of the contour due to the fact that both saddle points of the phase function lie on the steepest descent contour. As a result, the direct application of Cauchy's integral formula, as demonstrated in \cite{SNWW23}, is not straightforward. To overcome this obstacle, we need to divide the steepest descent contour, since Cauchy's integral formula typically requires the integrand to be analytic within a connected domain that contains the contour of integration and excludes any singularities within that domain.
However, the discussion for the Jacobi polynomials in the oscillatory interval becomes more complex than in the outer interval. In this case, the circumference of the steepest descent contour is difficult to describe, making it impractical to divide the contour as we did before. Therefore, we need to adopt a different approach to analyze this interval effectively.

The approaches in our paper not only circumvent the limitations posed by the `branch cut' technique but also provide a practical framework for computing error bounds in the context of Jacobi polynomials.

%%%%%%%%%
The integral representation of the Jacobi polynomials is given by 
\begin{equation}\label{Eq:Jacobi1}
P_n^{(\alpha,\beta)}(x)=\frac{(-1)^n(1-x)^{-\alpha}(1+x)^{-\beta}}{2^{n+1}\pi i}\int_{\mathcal L} (1-z)^{n+\alpha}(1+z)^{n+\beta}(z-x)^{-n-1}dz,
\end{equation}
 with $\alpha>-1$, $\beta>-1$,  $x\neq \pm1$, where $\mathcal L$ is extended in the positive sense along a closed curve around the point $x$, such that the points $\pm1$ lie neither on it nor in its interior (see \cite[p.72]{Szego79}).
Clearly, equation \eqref{Eq:Jacobi1} can be rewritten as 
\begin{align*}
P_n^{(\alpha,\beta)}(-x)=\frac{(1+x)^{-\alpha}(1-x)^{-\beta}}{2^{n+1}\pi i}\int_{\mathcal L} (1-z)^{n+\beta}(1+z)^{n+\alpha}(z-x)^{-n-1}dz=(-1)^n P_n^{(\beta,\alpha)}(x).
\end{align*}
By this relation, we may assume that $x\geqslant0$ and $x\neq1$.
It follows from the recurrence relation \cite[Equ. (4.5.4)]{Szego79}
\[P_{n}^{(\alpha+1,\beta)}(x) = \frac{2}{2n + \alpha + \beta + 2} \cdot \frac{(n + \alpha + 1)P_{n}^{(\alpha,\beta)}(x) - (n + 1)P_{n+1}^{(\alpha,\beta)}(x)}{1 - x},\]
and 
\[P_{n}^{(\alpha,\beta+1)}(x) = \frac{2}{2n + \alpha + \beta + 2} \cdot \frac{(n + \beta + 1)P_{n}^{(\alpha,\beta)}(x) + (n + 1)P_{n+1}^{(\alpha,\beta)}(x)}{1 + x},\]
we may assume without loss of generality that $-1<\alpha\leqslant 0$ and $\beta\leqslant \alpha$.

For $x>1$, we can denote the phase function in integral  \eqref{Eq:Jacobi1} by  
\begin{equation}\label{Eq:fz1}
f(z)=\log(z-x)-\log(z-1)-\log(z+1),
\end{equation}
which is analysis in $\mathbb C\setminus(-\infty, x]$.
The saddle points are defined as the zeros of $f'(z)$, which can be calculated to be 
\(
z_{\pm}=x\pm \sqrt{x^2-1}.
\)
For $x\in[0,1)$, the phase function in integral  \eqref{Eq:Jacobi1} shall be
\begin{equation}\label{Eq:fz2}
f(z)=\log(z-x)-\log(1-z)-\log(z+1),
\end{equation}
which is analytic in $\mathbb C\setminus(-\infty, x]\cup [1, +\infty)$.
Setting $f'(z)=0$, we find the critical points
\(
z_{\pm}=x\pm i\sqrt{1-x^2}.
\) 
For this reason, we will consider the following two cases. 

Case 1: the outer interval with $x=\cosh\gamma>1$ and $\gamma>0$ ; 

Case 2: the oscillatory interval with $x=\cos\gamma\in[0,1)$ and $\gamma\in(0,\frac{\pi}{2}]$.

The main result associates to the asymptotic expansion with error bound of the Jacobi polynomials for Case 1 can be derived as follows. 
\begin{thm}\label{Th:C1}
Let $-1<\beta\leqslant \alpha\leqslant 0$, $\gamma>0$ and $n$ be a positive integer, $\delta$ be given by
\begin{equation}\label{Eq:delta}
    \delta=\frac12\min\Big\{1, \sqrt{2\gamma},\sqrt{\log[4(\cosh\gamma+1)]-\gamma}, \sqrt{\log\frac{(\cosh\gamma+2)^2}{\cosh\gamma(\cosh\gamma-1)}}\Big\}.
\end{equation}
 Define that 
\[
c_p:=\left(1+\frac{1}{4}\left( p(p-1)\delta +2p+2\right)\left(\frac{p+1}{\delta^2 e}\right)^{p+1}\right)\frac{e^{2\gamma}}{4^p\delta^{2p+1}}\cdot\frac{ 10(\cosh\gamma+2)^5}{(\beta+1)(\cosh\gamma-1)},
\]
for any positive integer $p$. Then the Jacobi polynomials have the asymptotic expansion
\begin{equation}\label{Eq:Jacobiu1}
\begin{split}
P_n^{(\alpha,\beta)}(\cosh\gamma)
=&\frac{2^{\alpha+\beta}\cdot e^{\gamma (n+\alpha+\beta+1)}}{ (e^{\gamma}-1)^{\alpha}(e^{\gamma}+1)^{\beta}\sqrt{n\pi(e^{2\gamma}-1)}}\left(\sum_{j=0}^{p-1}\frac{\A_j(e^{\gamma})}{n^j}+\zeta_p(n,\alpha,\beta)\right),
\end{split}
\end{equation}
where the error term is bounded by
\begin{align}
|\zeta_p(n,\alpha,\beta)|\leqslant\frac{\hat c_p}{n^p},
\end{align}
with
\[
\hat c_p=|\A_p(e^{\gamma})|+\left|\frac{c_{p+1}\Gamma(p+\frac32)}{n}\right|
\]
the coefficients $\A_j(e^{\gamma})$ of this asymptotic expansion introduce in equation \eqref{Eq:Aj1}. 
The constant $\hat c_p$  satisfies $\hat c_p\to \A_p(e^{\gamma})$ as $n\to\infty$, which means that the error bound asymptotically matches the magnitude of the first neglected term in the expansion. The rational functions
$\A_p(e^{\gamma})=\frac{S_p(e^\gamma)}{T_p(e^\gamma)}$ (with $S_p(e^\gamma)$ and $T_p(e^\gamma)$ being degree-equivalent polynomials).
These rational functions $\A_p(e^{\gamma})$ can be computed via the recursive algorithm in Section \ref{APP:A}.
\end{thm}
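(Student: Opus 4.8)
The plan is to evaluate the integral representation \eqref{Eq:Jacobi1} by the method of steepest descent, converting it into a Gaussian integral whose remainder is controlled by Cauchy's coefficient estimates. Writing the integrand of \eqref{Eq:Jacobi1} as $(1-z)^{\alpha}(1+z)^{\beta}(z-x)^{-1}e^{-nf(z)}$ with $f$ as in \eqref{Eq:fz1}, the saddle points $z_{\pm}=e^{\pm\gamma}$ of $-f$ are the zeros of $f'$, and a direct computation gives $f(z_+)=-\gamma-\log 2$ and $f''(z_+)=-2/(e^{2\gamma}-1)$. The first identity yields $e^{-nf(z_+)}=2^{n}e^{n\gamma}$, which together with the amplitude at $z_+$ reproduces the prefactor of \eqref{Eq:Jacobiu1}, while the second supplies the factor $\sqrt{e^{2\gamma}-1}$ in its denominator. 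The dominant saddle is $z_+$; the subdominant saddle $z_-=e^{-\gamma}$ lies in $(0,1)\subset(-\infty,x]$, hence on the branch cut of $f$, which is exactly the structural difficulty emphasized in the introduction. First I would deform $\mathcal L$ onto the steepest descent contour through $z_+$; because this contour meets the branch cut and runs through $z_-$, I would split it into two arcs glued along the cut and apply Cauchy's theorem on each connected piece, so that no singularity of the integrand is enclosed.

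Next I would introduce the steepest descent variable $w$ by $f(z)-f(z_+)=w^2$, a map that is conformal near $z_+$ with $z-z_+\sim i\sqrt{e^{2\gamma}-1}\,w$. Under it the integral becomes $\int e^{-nw^2}h(w)\,dw$, where $h(w)=(1-z)^{\alpha}(1+z)^{\beta}(z-x)^{-1}\,dz/dw$ gathers the amplitude and the Jacobian. The obstructions to the analyticity of $h$, namely the second saddle and the branch points $z=\pm1,x$, are carried to the $w$-points whose moduli appear under the minimum in \eqref{Eq:delta}; in particular $\mathrm{Re}\,[f(z_-)-f(z_+)]=2\gamma$ accounts for the term $\sqrt{2\gamma}$, while the logarithmic terms record the $w$-distance to $z=\pm1$, so that $h$ is analytic on the disk of radius $2\delta$ about $w=0$. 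Expanding $h(w)=\sum_{j}a_j w^{j}$ and integrating against $e^{-nw^2}$, the odd powers cancel by the symmetry of the descent path and the even powers produce, via $\int_{-\infty}^{\infty} w^{2j}e^{-nw^2}\,dw=\Gamma(j+\tfrac12)\,n^{-j-1/2}$, the coefficients $\A_j(e^\gamma)$; the Cauchy-product structure of the $a_{2j}$ is what yields the recurrence of Section \ref{APP:A} and equation \eqref{Eq:Aj1}.

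The crux is the estimate of the remainder. Keeping terms through $w^{2p}$ and writing $h(w)=\sum_{j=0}^{p}a_{2j}w^{2j}+R_{p+1}(w)$, Cauchy's estimates on the circle $|w|=2\delta$ bound the relevant Taylor coefficients, equivalently the high derivatives, of $h$ in terms of $M:=\max_{|w|=2\delta}|h|$, whence $|R_{p+1}(w)|\le c_{p+1}|w|^{2p+2}$ for $|w|\le\delta$. Tracking $M$ explicitly against the amplitude $(1-z)^{\alpha}(1+z)^{\beta}(z-x)^{-1}$ and the Jacobian $dz/dw$ over the disk $|w|\le 2\delta$ is precisely what produces the stated constant $c_p$, including the powers $4^{-p}\delta^{-2p-1}$, the Stirling-type factor $(\tfrac{p+1}{\delta^2 e})^{p+1}$, the factor $e^{2\gamma}$, and the algebraic factors in $\cosh\gamma$. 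On the tail $|w|>\delta$ I would bound $R_{p+1}$ by its two constituent pieces and exploit the Gaussian decay of $e^{-nw^2}$ along the descent path to show it contributes at the same order. Combining the near-saddle bound with $\int_{-\infty}^{\infty}|w|^{2p+2}e^{-nw^2}\,dw=\Gamma(p+\tfrac32)\,n^{-p-3/2}$ and extracting the explicit term $\A_p(e^\gamma)/n^{p}$ gives $|\zeta_p|\le |\A_p(e^\gamma)|/n^{p}+c_{p+1}\Gamma(p+\tfrac32)/n^{p+1}$, that is the stated bound $\hat c_p/n^{p}$; since the second term is $O(1/n)$ relative to the first, $\hat c_p\to|\A_p(e^\gamma)|$ as $n\to\infty$.

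The main obstacle throughout is the branch-cut geometry rather than any single estimate: because both saddles lie on the steepest descent contour and $z_-$ sits on the cut, the contour splitting must be carried out so that Cauchy's theorem genuinely applies on each connected piece, and every constant entering the Cauchy and tail estimates must be kept fully explicit and uniform in $\gamma$ down to the form recorded in $c_p$ and \eqref{Eq:delta}. Making this deformation and the attendant bookkeeping rigorous, while preserving the analyticity of $h$ up to radius $2\delta$, is the heart of the argument.
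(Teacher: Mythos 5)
Your overall strategy---steepest descent through $z_+$, the variable $w^2=f(z)-f(z_+)$, a series expansion of the amplitude with an explicitly bounded remainder, and separate exponential estimates for the portion of the contour away from the saddle---is the same as the paper's, and your identification of the prefactor, of $\A_0=1$, and of the final peeling-off step $|\zeta_p|\le|\A_p(e^\gamma)|n^{-p}+c_{p+1}\Gamma(p+\tfrac32)n^{-p-1}$ matches the paper. The essential difference is where the Cauchy-type argument is run. You expand $h(w)=g(z(w))\,z'(w)$ in the $w$-plane and control the Taylor remainder by Cauchy's coefficient estimates on the circle $|w|=2\delta$. The paper never inverts the map: it represents $g(w^{-1}(u))/w'(w^{-1}(u))=\frac{1}{2\pi i}\int_{\Gamma^0}\frac{g(z)}{w(z)-u}\,dz$ over an explicit closed $z$-plane contour $\Gamma^0$ (a small circle about $x$, a large circle, and two horizontal segments hugging the branch cut $(-\infty,x]$ from above and below), expands $\frac{1}{w(z)-u}$ as a finite geometric series, and then needs only two facts: $|w(z)|\ge 2\delta$ on $\Gamma^0$ (this is exactly what the four terms in \eqref{Eq:delta} encode---$\sqrt{2\gamma}$ for the cut segment passing over $z_-$, and the two logarithms for the small and large circles; they are not the ``$w$-distance to $z=\pm1$,'' which is infinite) and $\int_{\Gamma^0}|g(z)|\,|dz|\le M_g$, finite because $\alpha,\beta>-1$.

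This difference is not cosmetic, and your version has a genuine failure point. The Jacobian $z'(w)$ has an inverse-square-root singularity at $w=\pm\sqrt{2\gamma}$, the image of the second saddle $z_-$. Since $2\delta\le\sqrt{2\gamma}$, with equality whenever $\sqrt{2\gamma}$ attains the minimum in \eqref{Eq:delta} (i.e.\ for moderate $\gamma$), your circle $|w|=2\delta$ then passes exactly through this singularity, $M=\max_{|w|=2\delta}|h|=\infty$, and the Cauchy estimate is vacuous. The paper's $z$-plane formulation is immune to this because the Jacobian never appears: on $\Gamma^0$ the integrand $g(z)/(w(z)^{2p}(w(z)-u))$ stays bounded near $z_-$ since $g(z_-)$ is finite and $|w(z_-^\pm)-u|\ge\sqrt{2\gamma}-\delta\ge\delta$. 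You could retreat to a strictly smaller radius, but then you would not reproduce the stated constant $c_p$; indeed your attribution of that constant to ``tracking $M$'' is also off in a second way: the factor $\left(\frac{p+1}{\delta^2e}\right)^{p+1}\left(p(p-1)\delta+2p+2\right)$ does not come from the Cauchy estimate at all, but from maximizing $n^{p+1}e^{-n\delta^2}$ over $n$ when the truncated Gaussian tails $\int_\delta^\infty e^{-nu^2}u^{2j}\,du$ and the exponentially small arcs $I_1,I_3$ are absorbed into a uniform $O(n^{-p})$ bound, while the $4^{-p}\delta^{-2p-1}M_g$ part comes from the geometric series together with the $L^1$ bound of $g$ on $\Gamma^0$. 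So the architecture of your argument is right, but the central analytic device must either be replaced by the paper's contour representation or be repaired (smaller radius, hence different constants) before the theorem as stated follows.
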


The main result derives the asymptotic expansion of the Jacobi polynomials for Case 2, along with its corresponding error bound, as follows.

\begin{thm}\label{Th:C2}
Let $-1<\alpha\leqslant 0$, $\beta\leqslant \alpha$ and $0<\gamma<\frac{\pi}2$, $n $ be a positive integer. Define 
\[N=\frac{\alpha+\beta+1}{2}+n,\qquad \kappa=-\Big(\frac{\alpha}{2}+\frac{1}{4}\Big)\pi\]
and
\begin{align*}
\notag  
c_p^+= &\pi +24 \pi \gamma^{2\alpha}+  \frac{128\sqrt{3\pi}(4e+1)^{\alpha+1}}{(\beta+1)\gamma^{p+3}}+\frac{128\sqrt{3\pi}}{\gamma^{p+3}}\mathbf{B}(\alpha+1,\beta+1) + \frac{9280 \gamma^{\alpha} + 1184\pi(\pi-\gamma)\gamma^{\beta}}{(\beta+1)^{\frac{3}{2}}\gamma^{p+1}}.
\end{align*}
For any positive integer $p$, we have the asymptotic expansion
\begin{align}\label{Eq:PnC2}
\notag P_n^{(\alpha,\beta)}(\cos\gamma)
=&\frac{1}{\sin^{\alpha+\frac{1}{2}}\frac{\gamma}{2} \cos^{\beta+\frac{1}{2}}\frac{\gamma}{2}\sqrt {n\pi}}\Big\{\cos(N\gamma+\kappa ) \sum_{j=0}^{p-1}\frac{\Re\big( \A_j(e^{i\gamma})\big)}{n^{j}}\\
&+\sin(N\gamma+\kappa)\sum_{j=0}^{p-1}\frac{\Im\big( \A_j(e^{i\gamma})\big)}{n^{j}}+\varepsilon_p(n,\alpha,\beta)\Big\},
\end{align}
where
\begin{align*}
|\varepsilon_p(n,\alpha,\beta)|\leqslant\frac{\hat c_p}{n^p},
\end{align*}
with
\[
\hat c_p=\left|\Re \big(\A_p(e^{i\gamma})e^{(N\gamma+\kappa)i}\big)\right|+\frac{c_{p+1}^+\Gamma(p+\frac32)}{n}
\] and the coefficients $\A_j(e^{i\gamma})$ define in equation \eqref{Eq:Aj2}. 
Furthermore, as $n\to \infty$  the constant $\hat c_p$  converges to $\left|\Re \big(\A_p(e^{i\gamma})e^{(N\gamma+\kappa)i}\big)\right|$. This implies that for sufficiently large $n$, the error bound is approximately equal to the absolute value of the first term neglected in the asymptotic expansion. The coefficients
$\A_p(e^{i\gamma})$ are defined as rational functions of  $e^{i\gamma}$,  specifically given by $\A_p(e^{i\gamma})=\frac{S_p(e^{i\gamma})}{T_p(e^{i\gamma})}$, where $S_p(e^{i\gamma})$ and $T_p(e^{i\gamma})$ are polynomials of the same degree. 
These rational functions $\A_p(e^{i\gamma})$ can be computed using a recursive formula, which is detailed in the Section \ref{APP:A}.
\end{thm}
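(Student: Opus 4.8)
The plan is to adapt the steepest-descent analysis behind Theorem~\ref{Th:C1} to the oscillatory regime, where the two zeros $z_\pm=e^{\pm i\gamma}$ of $f'$ are complex conjugates rather than a single dominant real saddle. First I would recast \eqref{Eq:Jacobi1} at $x=\cos\gamma$ in the canonical form
\[
P_n^{(\alpha,\beta)}(\cos\gamma)=\frac{C_n}{2\pi i}\int_{\mathcal{L}} e^{-nf(z)}\,g(z)\,dz,\qquad g(z)=(1-z)^{\alpha}(1+z)^{\beta}(z-x)^{-1},
\]
with $f$ as in \eqref{Eq:fz2} and $C_n$ the explicit prefactor, thereby isolating the large-$n$ exponential $e^{-nf(z)}$ from the fixed amplitude $g$. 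Since the integrand is symmetric under conjugation and $\mathcal{L}$ is a closed loop around $x$ avoiding the cuts $(-\infty,x]\cup[1,\infty)$, the contribution of $z_-$ is the conjugate of that of $z_+$; hence $P_n^{(\alpha,\beta)}(\cos\gamma)$ is recovered as (twice) the real part of the $z_+$-contribution, which is already the source of the $\cos(N\gamma+\kappa)$ and $\sin(N\gamma+\kappa)$ once $f(z_+)$ and $g(z_+)$ are evaluated. The phase $N\gamma+\kappa$ then arises from $\Im f(e^{i\gamma})$ (supplying the $n\gamma$ and the $\tfrac{\alpha+\beta+1}{2}\gamma$ corrections from $g$ and $C_n$), the steepest-descent direction (an $e^{\pm i\pi/4}$ rotation), and the branch of $(1-z)^{\alpha}$ at $z_+$ (supplying the $-\tfrac{\alpha}{2}\pi$ in $\kappa$).

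Next I would carry out the local analysis at $z_+$. Introducing the steepest-descent variable $u$ through $f(z)-f(z_+)=u^{2}$ sends $z_+$ to $u=0$ and turns the relevant arc into a segment of the real $u$-axis, so that the $z_+$-contribution equals $e^{-nf(z_+)}\int e^{-nu^{2}}\Phi(u)\,du$ with $\Phi(u)=g(z(u))\,dz/du$. Expanding $\Phi$ as a Taylor polynomial of degree $2p+1$ about $u=0$ and integrating term by term against $e^{-nu^{2}}$ produces the series $\sum_j \A_j(e^{i\gamma})/n^{j}$ through the Gaussian moments $\int u^{2j}e^{-nu^{2}}\,du=\Gamma(j+\tfrac12)\,n^{-j-1/2}$, the odd moments dropping out. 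Because the only structural change from \eqref{Eq:fz1} to \eqref{Eq:fz2} is the constant shift $\log(z-1)\mapsto\log(1-z)$, the derivatives of $f$ and $g$ at the saddle, and hence the coefficients, are exactly the Case~1 rational functions evaluated at $e^{i\gamma}$ in place of $e^{\gamma}$ (consistent with $\cosh(i\gamma)=\cos\gamma$); they are thus delivered by the single recursion of Section~\ref{APP:A} and match \eqref{Eq:Aj2}. Collecting $C_n$, $e^{-nf(z_+)}$, and the Gamma values yields the normalization $1/(\sin^{\alpha+1/2}\tfrac{\gamma}{2}\cos^{\beta+1/2}\tfrac{\gamma}{2}\sqrt{n\pi})$.

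The crux of the argument, and the step where Case~2 genuinely departs from Case~1, is the explicit bound on the remainder. After the term-by-term integration one is left with
\[
\Re\!\left(e^{-nf(z_+)}\!\int e^{-nu^{2}}\,u^{2p+2}R_{p}(u)\,du\right),
\]
where $R_p$ is the Taylor remainder of $\Phi$ at $u=0$. As the introduction notes, the steepest-descent contour here is a closed loop whose circumference is awkward to describe, so rather than dividing it as in Case~1 I would deform $\mathcal{L}$ onto a fixed, explicitly parametrized contour — line segments together with contributions near the branch points $z=\pm1$ — on which $\Re f$ is bounded below away from $z_\pm$. I would then control $|R_p(u)|$ by Cauchy estimates for $\Phi$ on a disk of radius comparable to $\gamma$ about $u=0$, and estimate the part of the integral near the endpoints $z=\pm1$, where $(1-z)^{\alpha}(1+z)^{\beta}$ is algebraically singular, by a Beta-function integral; this is what produces $c_{p+1}^{+}$ with its $\gamma^{-(p+3)}$, $\gamma^{2\alpha}$, $(4e+1)^{\alpha+1}$, and $\mathbf{B}(\alpha+1,\beta+1)$ contributions. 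The surviving Gaussian factor $\int|u|^{2p+2}e^{-nu^{2}}\,du=\Gamma(p+\tfrac32)\,n^{-p-3/2}$ supplies the $\Gamma(p+\tfrac32)$, and combining the explicitly retained $j=p$ term $\Re(\A_p(e^{i\gamma})e^{(N\gamma+\kappa)i})$ with this remainder bound gives $\hat c_p$. A final pass lets $n\to\infty$, so that the $\Gamma(p+\tfrac32)/n$ piece vanishes and $\hat c_p\to|\Re(\A_p(e^{i\gamma})e^{(N\gamma+\kappa)i})|$.

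I expect the principal obstacle to be the construction of a single explicit contour that handles both conjugate saddles at once, keeps $\Re f$ suitably negative away from $z_\pm$, and near $z=\pm1$ simultaneously controls the logarithmic singularity of the phase and the algebraic singularity of $g$, all with constants that stay explicit and uniform in $\alpha,\beta,\gamma$. The uniformity as $\gamma\to0$, where the two saddles coalesce toward the turning point, is precisely what forces the negative powers of $\gamma$ in $c_p^{+}$ and will demand the most delicate estimation.
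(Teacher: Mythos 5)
Your overall architecture matches the paper's: exploit the conjugate symmetry $I_-=\overline{I_+}$ to reduce to the saddle $z_+=e^{i\gamma}$, substitute $u=w(z)$ with $w^2=f(z)-f(z_+)$ to obtain a Gaussian integral, read off the phase $N\gamma+\kappa$ from $e^{-nf(z_+)}$, $(-1)^n$ and the branches of $(1-z)^\alpha(1+z)^\beta$ at $z_+$, and identify the coefficients with the Case~1 rational functions evaluated at $e^{i\gamma}$ via $\cos\gamma=\cosh(i\gamma)$ and the recursion of Section~\ref{APP:A}. All of that is sound and is exactly what the paper does.

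The genuine gap is in your treatment of the remainder. You propose to Taylor-expand $\Phi(u)=g(w^{-1}(u))/w'(w^{-1}(u))$ about $u=0$ and to control the Taylor remainder $R_p(u)$ ``by Cauchy estimates for $\Phi$ on a disk of radius comparable to $\gamma$ about $u=0$.'' Such an estimate only bounds $R_p(u)$ for $|u|$ strictly inside that disk (and degenerates as $|u|$ approaches the radius), whereas in Case~2 the $u$-integration range is $(-\tfrac{\sqrt{-\log\varepsilon}}{2},\tfrac{\sqrt{-\log\varepsilon}}{2})$, i.e.\ all of $\R$ in the limit $\varepsilon\to0$; there is no analogue of the Case~1 truncation to $[-\delta,\delta]$ with separately bounded tails $I_{j,\delta}$. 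To get an explicit constant you need a bound on the remainder that is uniform in $u\in\R$. The paper achieves this by never Taylor-expanding $\Phi$ at all: it writes $\Phi(u)=\frac{1}{2\pi i}\int_{\Gamma_0^+}\frac{g(z)}{w(z)-u}\,dz$ over the explicit contour $\Gamma_0^+$ of Notation~\ref{No:Gam0+}, expands the kernel $\frac{1}{w(z)-u}$ as a finite geometric series, and then bounds $C_{p,+}(u)=\int_{\Gamma_0^+}\frac{g(z)}{w(z)^{2p}(w(z)-u)}\,dz$ uniformly for all real $u$ by proving lower bounds on $|\Im w(z)|$ (of order $\gamma$, Lemmas~\ref{Lem:imw1} and~\ref{Le:imw}) along the segments of $\Gamma_0^+$, so that $|w(z)-u|\geqslant|\Im w(z)|$ holds for every $u\in\R$; the small arcs $\Gamma_{\pm1}^+$ around the branch points are shown to contribute nothing as $\varepsilon\to0$. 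These $|\Im w|$ estimates, rather than a disk radius about $u=0$, are what produce the explicit powers $\gamma^{-(p+1)}$, $\gamma^{-(p+3)}$ and the Beta-function term in $c_p^+$. Without this (or an equivalent uniform-in-$u$ device), your remainder bound does not close with a computable constant, which is the entire point of the theorem.
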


Note that $\A_0(e^{i\gamma})=\A_0(e^{\gamma})=1$ (refer to Section \ref{Sec:C1} and Section \ref{Sec:C2}).  A direct computation shows that the leading term of the asymptotic expansion in \eqref{Eq:Jacobiu1} is consistent with  \cite[ Equ. (8.21.9)]{Szego79}). Similarly, it is evident that  the leading term in \eqref{Eq:PnC2} agrees with \cite[ Equ. (8.21.10)]{Szego79}.

Throughout this paper, we investigate analytic functions with branch cuts along a real interval, say, $(c,d)$, which may be finite or infinite. While such a function $f$ may not be well-defined on the interval itself, it admits one-sided limits from the upper and lower half-planes. Specifically, for each $x\in(c,d)$, the limits  
\[
\lim_{\varepsilon \to 0^+} f(x \pm i \varepsilon)  
\]  
exist.  

For convenience, we introduce the notation $(c,d)^\pm$ to denote the limiting values of $(c,d) \pm i\varepsilon$ as $\varepsilon\to0^+$. This notation is meaningful only when $f$ is analytic in a neighborhood of $x\pm i\varepsilon$ for all $x\in(c,d)$ and $\varepsilon>0$. If $c$ or $d$ is finite, analogous definitions apply for half-open or closed intervals, such as $[c,d)^\pm$, $(c,d]^\pm$, or $[c,d]^\pm$.  

As an example, consider the square root function $f(z)=(z-1)^{1/2}$ with a branch cut along $(-\infty,1]$. For $x\in(-\infty,1)^\pm$, we have  
\[
f(x) = \lim_{\varepsilon \to 0^+} f(x \pm i \varepsilon) = \pm i \sqrt{1 - x}.  
\]  

This paper is organized as follows. In Section~\ref{Sec:C1}, we derive explicit and computable bounds for the error term of the Jacobi polynomials in the outer region. In Section~\ref{Sec:C2}, we derive analogous bounds for the oscillatory region. The recurrence formula for the coefficients in the asymptotic expansions of the Jacobi polynomials is presented in Section \ref{APP:A}. The proof of part of the lemmas will be given in Appendix \ref{APP:L}.

\section{Case 1: \(x>1\)}\label{Sec:C1}
Assume $x=\cosh \gamma>1$ with $\gamma>0$.
We rewrite equation \eqref{Eq:Jacobi1} as  
\begin{equation}\label{Eq:Jacobi2}
P_n^{(\alpha,\beta)}(x)=\frac{(x-1)^{-\alpha}(x+1)^{-\beta}}{2^{n+1}\pi i}\int_{\mathcal L} e^{-nf(z)}g(z)dz,
\end{equation}
where $f(z)$ is the phase function in equation \eqref{Eq:fz1} and 
\begin{equation}\label{Eq:gz1}
g(z)=\frac{(z-1)^{\alpha}(z+1)^{\beta}}{z-x}.
\end{equation}
Recalling that the saddle points
\(
z_{\pm}=x\pm \sqrt{x^2-1}=e^{\pm \gamma}
\), by choosing an appropriate cut for the logarithmic function, we have
\(
f(z_\pm)=\mp\gamma-\log 2.
\)
% and 
% \begin{equation*}
%     f(z_-)-f(z_+)=2\gamma>0.
% \end{equation*}
A simple calculation yields
\[
f''(z)=-\frac{1}{(z-x)^2}+\frac{2(z^2+1)}{(z^2-1)^2}.
\]
In particular, we have 
\[
f''(z_+)=\frac{2}{1-e^{2\gamma}}<0.
\]
In order to describe the steepest decent contour, we introduce the analytic function 
\begin{equation}\label{Eq:wz1}
\begin{split}
w(z)&=\left(f(z)-f(z_+)\right)^{1/2}=\left(-\log\frac{z^2-1}{z-x}+\log 2 +\gamma \right)^{1/2}, 
\end{split}
\end{equation}
for $z\in \mathbb C\setminus(-\infty, x]$.
As $z\to z_+$, we have the Taylor expansion:
\begin{align*}
f(z)=f(z_+)+\frac{f''(z_+)}{2}(z-z_+)^2+O\big((z-z_+)^3\big).
\end{align*}
We choose the square root function of equation \eqref{Eq:wz1} as 
\begin{align*}
w(z)=e^{-\frac{\pi }2i}\sqrt{\frac{-f''(z_+)}{2}}(z-z_+)+O\big((z-z_+)^2\big).
\end{align*}
Note that on the steepest decent/ascent curve, we have ${\rm Im}\big(f(z)-f(z_+)\big)=0$, i.e., ${\rm Im}f(z)=0$. A simple calculation gives 
 \begin{equation}\label{Eq:imft=0}
 \begin{cases}
 {\rm Im}\big(z+x+\cfrac{x^2-1}{z-x}\big)=0;\\
{\rm Re}\big(z+x+\cfrac{x^2-1}{z-x}\big)>0.
 \end{cases}
 \end{equation}
Set $z-x=a+ib$. We obtain from  \eqref{Eq:imft=0} two curves in the $z$ complex plane and $f(z)\geqslant f(z_+)$;
the steepest descent curve is defined by
\begin{align*}
\{z=x+a+ib \, |\, a^2+b^2=x^2-1 \}\cup \{z=x+a \, |\, -1< a+x<1\}.
\end{align*}
From  \eqref{Eq:imft=0}  and $f(z)\leqslant f(z_+)$; the steepest ascent curve is defined by
\begin{equation*}
\{z=x+a \, |\, a>0\}.
\end{equation*}
%   \begin{equation}\label{Eq:imft=02}
%  \begin{cases}
% b=0;\\
% a\left(a+x+1\right)\left(a+x-1\right)>0.
%  \end{cases}
%  \end{equation}

Now, we define the steepest decent curve $\Gamma$ as the union of the circle given in the above equation and two line segments connecting $1\pm i\ep$ to $z_-\pm i\ep$ with $\ep\to0^+$; see Fig. \ref{Fig:sd1}. 
\begin{figure}[H]
\center
\includegraphics[scale=0.6] {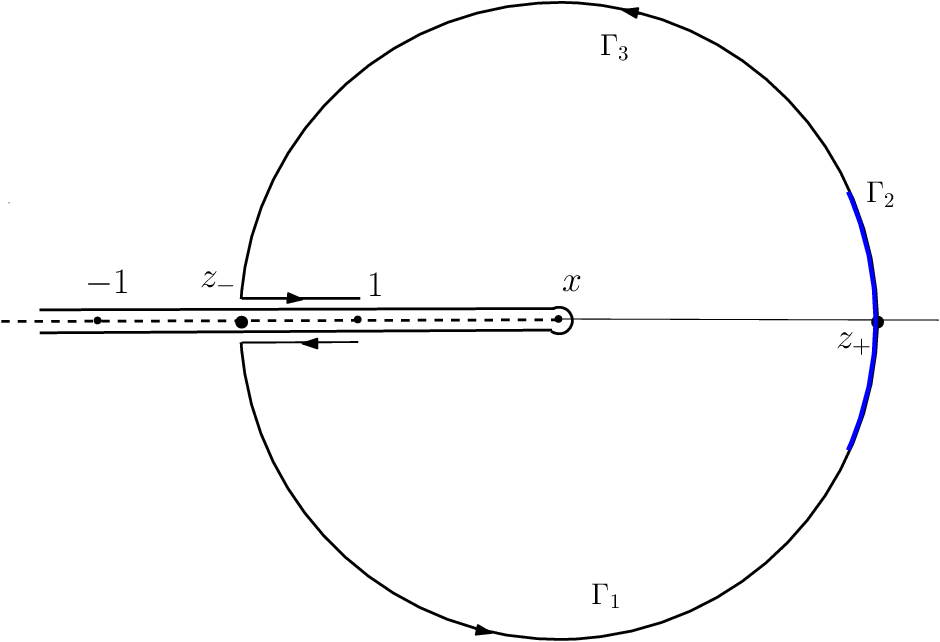}
 \caption{The steepest decent contour $\Gamma=\Gamma_1+\Gamma_2+\Gamma_3$.}   
 \label{Fig:sd1}  
\end{figure}

As in \cite{SNWW23}, we observe from equation \eqref{Eq:wz1} that when $z$ traverses along the steepest descent curve $\Gamma$, starting from $1$ in the lower plane and ending at $1$ in the upper plane, the behavior of the functions $f(z)$ and $w(z)$ can be described as follows. The function $f(z)$ initially decays from $+\infty$ to $f(z_+)$ (at $z = z_+$) and subsequently increases from $0$ to $+\infty$. Meanwhile, the function $w(z)$ exhibits a monotonic increase from $-\infty$ to $+\infty$.  

\begin{figure}[H]
\centering
\includegraphics[scale=0.5] {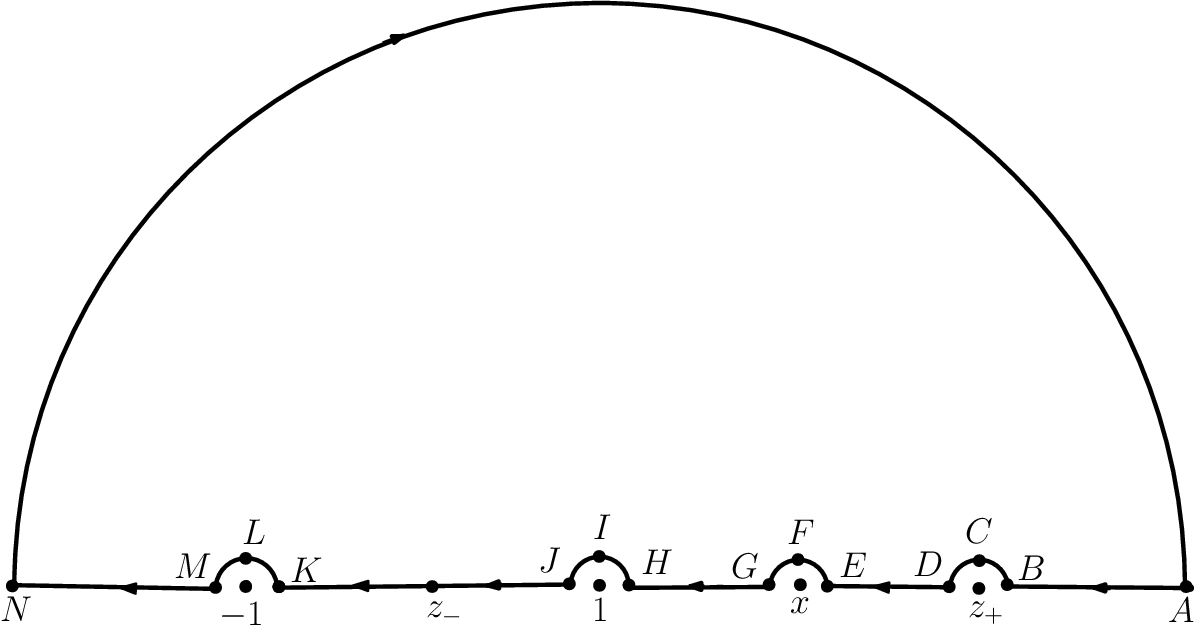}
\label{Fig:zcon1}
%\label{Fig:z1}
 \caption{ The $z-$plane.}   
 %%%%%%%%%%%%%  
\end{figure}
\begin{figure}[H]
\centering
\includegraphics[scale=0.6] {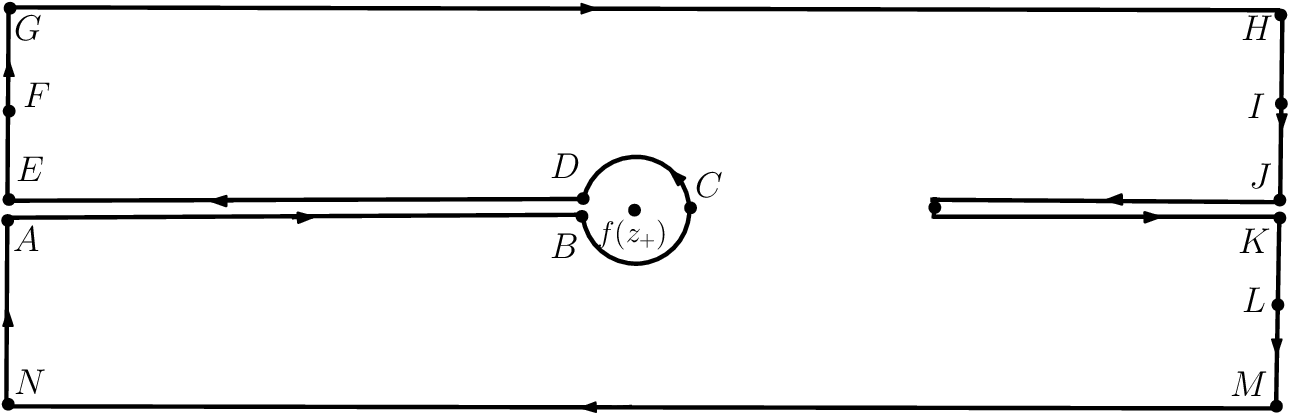}
 \caption{ The $f-$plane.}  
 \label{Fig:f1}   
\end{figure}
%%%%%%%%%%%%%%%%
\begin{figure}[H]
\centering
\includegraphics[height=8cm,width=8cm] {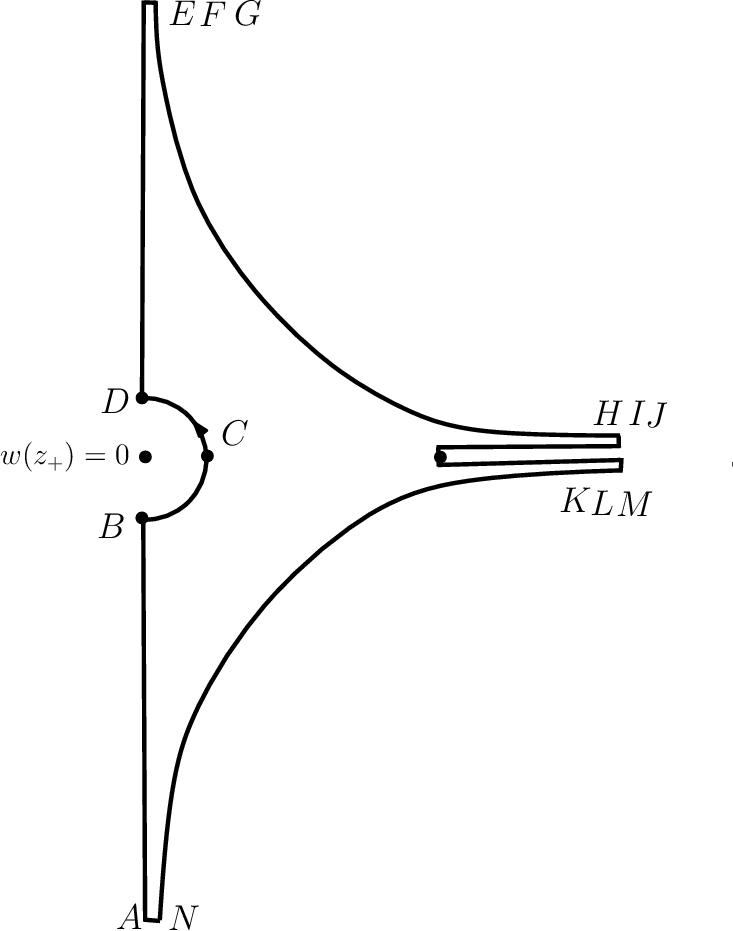}
 \caption{ The $w-$plane.}     
 \label{Fig:w1}
\end{figure}

To visualize the conformal maps $f(z)$ and $w(z)$, we plot a closed curve $ABCDEFGHIJKLMNA$ in the upper half of the $z$-plane, as shown in Fig. \ref{Fig:zcon1}, along with its corresponding images under the maps $f(z)$ and $w(z)$, shown in Fig. \ref{Fig:f1} and Fig. \ref{Fig:w1}, respectively. By symmetry, the images of a similar closed curve in the lower half of the $z$-plane can be obtained through a simple reflection.

 Let $\delta\in(0,1)$ be a constant as in equation \eqref{Eq:delta}. Recall that $x=\cosh \gamma$.
The parameter $\eta\in(0,\pi)$ is determined by the following equation
\begin{equation}\label{eta-delta}
    w(\cosh\gamma+\sinh\gamma e^{i\eta})=\delta.
\end{equation} 
Since both  saddle points  $z_{\pm}=e^{\pm \gamma}$ lie on the steepest decent contour of $f(z)$,  we adopt a strategic approach by dissecting this contour $\Gamma$ into distinct segments: $\Gamma_1+\Gamma_2+\Gamma_3$, as illustrated in Fig. \ref{Fig:sd1}, where
\begin{equation}\label{Eq:Gam1}
    \Gamma_1=[e^{-\gamma},1)^-\cup
\Gamma_{11},
\end{equation}
with
\begin{equation}\label{Eq:Gam11}
\Gamma_{11}=\{z|z=\cosh\gamma+\sinh\gamma e^{i\theta}, ~\theta\in (-\pi,-\eta)\};
\end{equation}
and
\[
\Gamma_2=\{z|z=\cosh\gamma+\sinh\gamma e^{i\theta}, ~\theta\in (-\eta,\eta)\};
\]
\[
\Gamma_3=\{z|z=\cosh\gamma+\sinh\gamma e^{i\theta}, ~\theta\in (\eta,\pi)\}\cup [z_-,1)^+.\]
% \[
% \Gamma_{31}=\{z|z=s+i\varepsilon, ~s\in [z_-,1),~\varepsilon>0\}.
% \]
% where $(c,d)^{+}$ (resp. $(c,d)^{-}$)  represents the upper (resp. lower) edge of the secant line.
%modifies

By changing the contour $\mathcal L$ to the steepest decent $\Gamma=\Gamma_1+\Gamma_2+\Gamma_3$,  we can rewrite equation \eqref{Eq:Jacobi2} as 
\begin{equation}\label{Eq:Jc1}
\begin{split}
P_n^{(\alpha,\beta)}(\cosh\gamma)
=(\cosh\gamma-1)^{-\alpha}(\cosh\gamma+1)^{-\beta}\left(I_1+I_2+I_3\right),
\end{split}
\end{equation}
where 
\begin{equation}\label{Eq:Ij1}
I_j=\frac{e^{-nf(z_+)}}{2^{n+1}\pi i}\int_{\Gamma_j}\,e^{-n[f(z)-f(z_+)]}g(z)\,dz,  ~~~~j=1,2,3.
\end{equation}
%By symmetry, we only need to consider $I_1$ and $I_2$. 
%\[I_1=\frac{e^{-nf(z_+)}}{2^{n+1}\pi i}\int_{\Gamma_1}\,e^{-n[f(z)-f(z_+)]}g(z)\,dz.\]
As we shall see later, $I_1$ and $I_3$ are exponentially small with respect to $I_2$.

\subsection{Estimation of \( |I_2| \) }
By making  a change of variable $u=w(z)$, we can rewrite $I_2$ as 
\begin{equation}\label{Eq:J1I2}
I_2=
\frac{e^{-nf(z_+)}}{2^{n+1}\pi i}\int_{-\delta}^{\delta}\,e^{-nu^2}\frac{g(w^{-1}(u))}{w'(w^{-1}(u))}\,du.
\end{equation}
The Cauchy integral formula yields that
\begin{align}\label{Eq:ginu}
\frac{g(w^{-1}(u))}{w'(w^{-1}(u))}=\frac1{2\pi i}\int_{\Gamma^0}\frac{g(z)}{w(z)-u}\,dz
\end{align}
for $u\in [-\delta,\delta]$, and 
the contour  $\Gamma^0$  is a closed, clockwise-oriented, contour with  $\Gamma_2$ in its interior (see Fig. \ref{Fig:sdcga0}). More specifically, $\Gamma^0$ is composed of three distinct components
\[
\Gamma^0=\Gamma_x^0+\Gamma_{(x,o)^{\pm}}+\Gamma_o^0,
\]
which combines the arcs $\Gamma_x^0$ and $\Gamma_o^0$ and the horizontal segment $\Gamma_{(x,o)^{\pm}}$, defined by
\begin{align*}
\Gamma_x^0&=\bigg \{z|z=\cosh\gamma+\frac{\cosh\gamma-1}{4(\cosh\gamma+2)^4}e^{i\theta}, ~\theta \in \big(-\pi,\pi\big)\bigg\};\\
\Gamma_{(x,o)^{\pm}}&=\bigg \{z|z=\mu, ~\mu \in [-10(\cosh\gamma+1)+\cosh\gamma, ~\cosh\gamma-\frac{\cosh\gamma-1}{4(\cosh\gamma+2)^4}]^\pm \bigg\};
\\
\Gamma_o^0&=\bigg \{z|z=\cosh\gamma+re^{i\theta}, ~r=10(\cosh\gamma+1),~\theta \in \Big(-\pi,\pi)\bigg\}.
\end{align*} 

\begin{figure}[H]
\center
\includegraphics[scale=0.45]{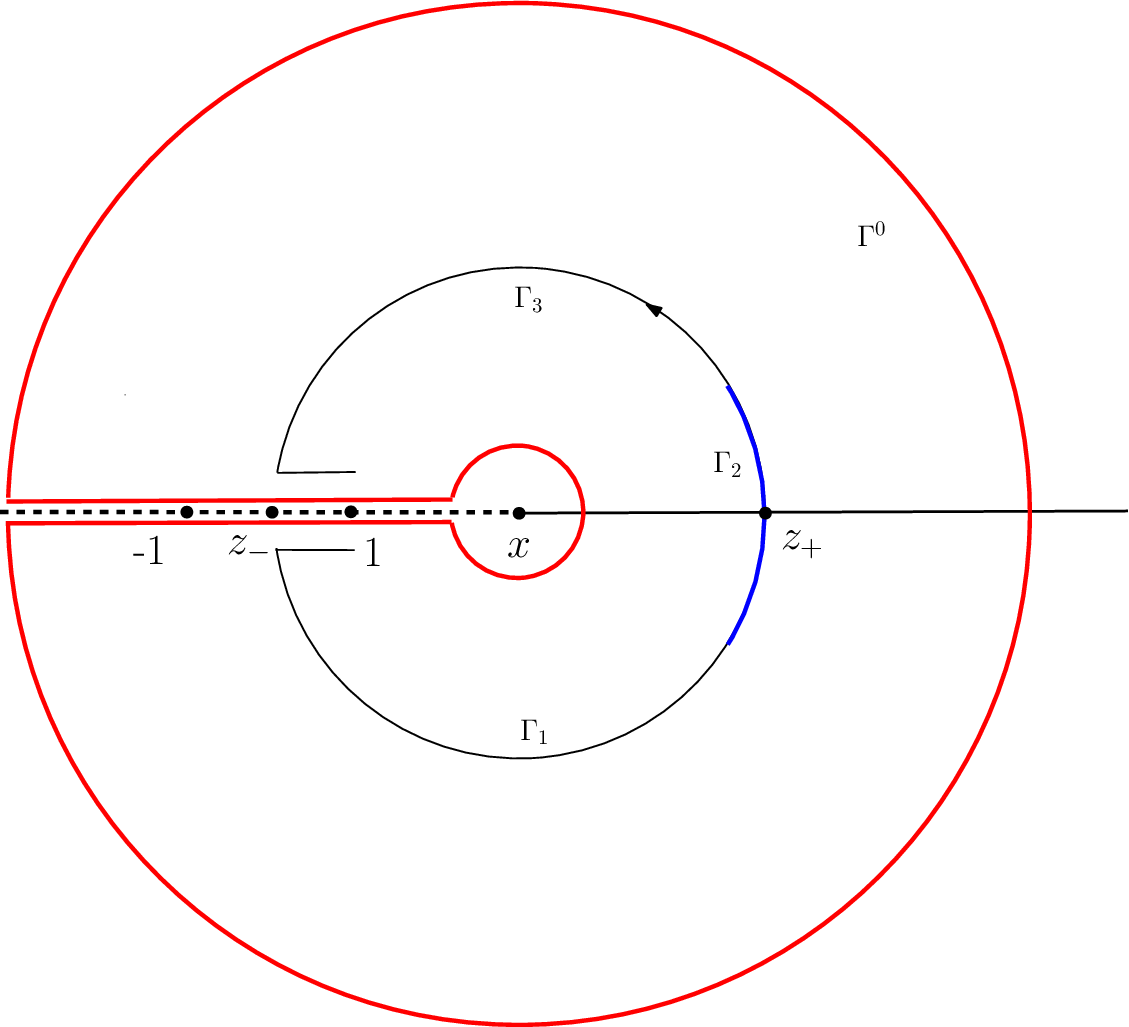}
 \caption{ The path of integration $\Gamma^0$ in \eqref{Eq:ginu}  is colored in red.}    
 \label{Fig:sdcga0} 
\end{figure}

For a more intuitive understanding, we show the behavior of $w(z)$ for $z\in \Gamma_2$ and $z\in \Gamma^0$ in Fig. \ref{Fig:w1ga0}.

\begin{figure}[H]
\centering
\includegraphics[scale=0.6]{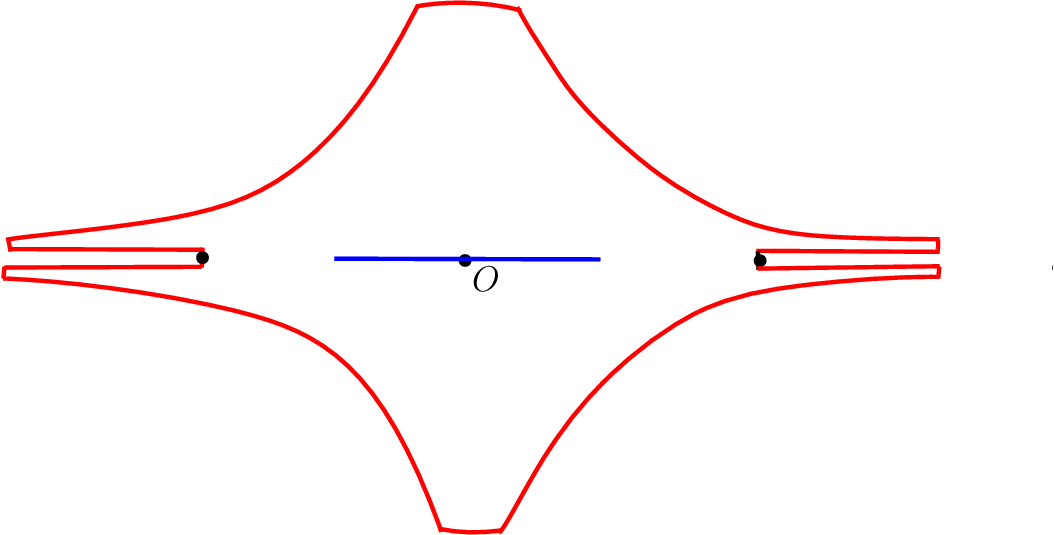}
 \caption{ The contour $\Gamma^0$ and the part of steepest decent contour $\Gamma_2$ in $w$-palne.}    
 \label{Fig:w1ga0}
\end{figure}

\begin{rem}\label{Remf}
   Recalling that $\eta>0$ satisfies \eqref{eta-delta}, we have 
    \[
    f(z)-f(z_+)\geqslant \delta , 
    \]
    for $z\in \Gamma_1\cup \Gamma_3$.
\end{rem}

%%%%%%%%%%%

By substituting  \eqref{Eq:ginu} into \eqref{Eq:J1I2}, we obtain the  double integral expression 
\begin{equation}\label{Eq:I2ju}
I_2=-\frac{e^{-nf(z_+)}}{2^{n+2}\pi^2 }\int_{-\delta}^{\delta}\,e^{-nu^2}\int_{\Gamma^0}\frac{g(z)}{w(z)-u}\,dz\,du,
\end{equation}
where $\Gamma^0$ denotes the closed, clockwise-oriented, contour defined earlier, containing $z_+$ in its interior.
To facilitate numerical analysis, we introduce the coefficients
\begin{equation}\label{Eq:Aj1}
\A_j(e^{\gamma})=-\frac{\Gamma(j+1/2)\sqrt{e^{2\gamma}-1}}{4\pi^{3/2}e^{\gamma}(e^{\gamma}-1)^{\alpha}(e^{\gamma}+1)^{\beta}}\int_{\Gamma^0}\,\frac{g(z)}{w(z)^{2j+1}}dz, 
\end{equation}
which encode the singular behavior of the integrand near $z_+$. Additionally, we define the error term
\begin{equation}\label{Eq:epsilon}
\varepsilon_p(n,\alpha,\beta)=
-\frac{\sqrt{n(e^{2\gamma}-1)}}{4\pi^{3/2}e^{\gamma}(e^{\gamma}-1)^{\alpha}(e^{\gamma}+1)^{\beta}}\int_{-\delta}^{\delta}e^{-nu^2}u^{2p}C_{p}(u)du-2\sqrt{n} \sum_{j=0}^{p-1}I_{j,\delta} \A_j(e^{\gamma}),
\end{equation}
with  $C_{p}(u)$ given by
\begin{equation}\label{Eq:Cpu1}
    C_{p}(u)=\int_{\Gamma^0}\,\frac{g(z)}{w(z)^{2p}(w(z)-u)}dz,
\end{equation}
for $u\in [-\delta, \delta]$, 
and the integral $I_{j,\delta}$ defined by
\begin{align}\label{Eq:Idelta}
I_{j,\delta}=\int_{\delta}^{+\infty}e^{-nu^2}u^{2j}du.
\end{align}
For any non-negative integer $p$, we expand the integrand  \eqref{Eq:I2ju} using the geometric series
\[\frac1{w(z)-u}=\sum_{k=1}^{2p}\frac{u^{k-1}}{w(z)^k}+\frac{u^{2p}}{w(z)^{2p}(w(z)-u)},\]
which allows us to decompose the double integral \eqref{Eq:I2ju} as
\begin{align}\label{Eq:I2juer+}
I_2 &= -\frac{e^{-nf(z_+)}}{2^{n+2}\pi^2 }\left(\sum_{k=1}^{2p}\int_{-\delta}^{\delta}\,e^{-nu^2}u^{k-1}\, du\int_{\Gamma^0}\frac{g(z)}{w(z)^k}\,dz+\int_{-\delta}^{\delta}\,e^{-nu^2}u^{2p}\int_{\Gamma^0}\frac{g(z)}{w(z)^{2p}(w(z)-u)}\,dz\,du\right).
\end{align}
The integrals over $u$ can be evaluated explicitly, and we have
\begin{equation*}
\int_{-\delta}^{\delta}\,e^{-nu^2}u^{k-1}du=
\begin{cases}0, ~~~&k=2j\\
\frac{\Gamma(j+1/2)}{n^{j+1/2}}-2I_{j,\delta},~~&k=2j+1.
\end{cases}
\end{equation*}
Substituting these results into \eqref{Eq:I2juer+} and simplifying, we obtain
\begin{equation}\label{Eq:I2juer1}
 I_2
=\frac{e^{\gamma}(e^{\gamma}-1)^{\alpha}(e^{\gamma}+1)^{\beta}e^{-nf(z_+)}}{2^{n} \sqrt{n\pi(e^{2\gamma}-1)}}\Big\{\sum_{j=0}^{p-1}\frac{\A_j(e^{\gamma})}{n^{j}}+\varepsilon_p(n,\alpha,\beta)\Big\}.
\end{equation}
Note that $g(z_+)=\frac{2e^{\gamma}(e^{\gamma}-1)^{\alpha}(e^{\gamma}+1)^{\beta}}{e^{2\gamma}-1}$. For the special case $p=0$, applying Cauchy integral formula to \eqref{Eq:Aj1} yields
\begin{align*}
\A_0(e^{\gamma})&=-\frac{\Gamma(1/2)\sqrt{(e^{2\gamma}-1)}}{4\pi^{3/2}e^{\gamma}(e^{\gamma}-1)^{\alpha}(e^{\gamma}+1)^{\beta}}\int_{\Gamma^0}\,\frac{g(z)}{[f(z)-f(z_+)]^{\frac{1}{2}}}dz=\frac{1}{\sqrt{e^{2\gamma}-1}}\sqrt{\frac{-2}{f''(z_+)}}=1,
\end{align*}
which confirms the normalization of $\A_0$. 

Note that $(\cosh\gamma-1)^{-\alpha}=2^{\alpha}e^{\alpha\gamma}(e^{\gamma}-1)^{\alpha}$ and $(\cosh\gamma+1)^{-\beta}=2^{\beta}e^{\beta\gamma}(e^{\gamma}+1)^{\beta}$. A combination of \eqref{Eq:Jc1} and \eqref{Eq:I2juer1} gives the asymptotic expansion
\begin{equation}\label{Eq:Ju1}
\begin{split}
P_n^{(\alpha,\beta)}(\cosh\gamma)
=&\frac{e^{(1+\alpha+\beta)\gamma}e^{-nf(z_+)}}{2^{n-\alpha-\beta} (e^{\gamma}-1)^{\alpha}(e^{\gamma}+1)^{\beta}\sqrt{n\pi(e^{2\gamma}-1)}}\\
&\cdot\Big\{\sum_{j=0}^{p-1}\frac{\mathcal{A}_j(e^{\gamma})}{n^{j+1/2}}+\varepsilon_p(n,\alpha,\beta)+\frac{2^{n} \sqrt{n\pi(e^{2\gamma}-1)}}{e^{\gamma}(e^{\gamma}-1)^{\alpha}(e^{\gamma}+1)^{\beta}e^{-nf(z_+)}}(I_1+I_3)\Big\}.
\end{split}
\end{equation}
To establish an upper bound for the remainder term $\varepsilon_p(n,\alpha,\beta)$ defined in \eqref{Eq:epsilon}, we perform a detailed analysis of the three critical components: $C_{p}(u)$ defined in equation \eqref{Eq:Cpu1},  the coefficient $|\A_j(e^{\gamma})|$  specified in \eqref{Eq:Aj1} and the integral term $I_{j,\delta}$ introduced in  \eqref{Eq:Idelta}. 

We first estimate $I_{j,\delta}$ by using integration by parts
\begin{align*}
 I_{j,\delta}=&\int_{\delta}^{+\infty}e^{-nu^2}u^{2j}du\\
=&\sum_{k=1}^j\frac{(2j-1)\cdots(2j-(2(k-1)-1))}{(2n)^k}e^{-n\delta^2}\delta^{2j-(2k-1)}+\frac{(2j-1)\cdots(2j-(2(k-1)-1))\cdots 1}{(2n)^j}\int_{\delta}^{+\infty}e^{-nu^2}du.
\end{align*}
For the last integral, we substitute $u^2=\tau$ and obtain 
\begin{align*}
\int_{\delta}^{+\infty}e^{-nu^2}du=\int_{\delta^2}^{+\infty}\frac{e^{-n\tau}}{2\sqrt{\tau}}d\tau\leqslant \frac{1}{2\delta}\int_{\delta^2}^{+\infty}e^{-n\tau}d\tau\leqslant \frac{e^{-n\delta^2}}{2n\delta}.
\end{align*}
Thus, given $0 < \delta < 1$, we choose $n\geqslant \max\{j,\frac{1}{2\delta}\}$ and have
\begin{align}\label{Eq:erforIde}
|I_{j,\delta}|\leqslant \Big(j\delta+ 1\Big)e^{-n\delta^2}.
\end{align}

We now proceed to estimate $C_{p}(u)$ and $|\A_j(e^{\gamma})|$ as defined in equations \eqref{Eq:Cpu1} and  \eqref{Eq:Aj1}, respectively. This requires a review of the decomposition of the contour $\Gamma^0$:
\[
\Gamma^0=\Gamma_x^0+\Gamma_{(x,o)^{\pm}}+\Gamma_o^0.
\]
Among these components, the critical components for our analysis are $\Gamma_{(x,o)^{\pm}}$, where the behaviors of  $w(z)$, $w(z)-u$ and $g(z)$ require a rigorous examination. Specifically, we must derive lower bounds for $w(z)$, $w(z)-u$ and $g(z)$ along these segments.

By exploiting symmetry in the problem, we focus solely on $\Gamma_{(x,o)^{+}}$ for $|w(z)|$ and $|w(z)-u|$.
To systematically derive the required bounds, we introduce the following notation and partitioning of 
$\Gamma_{(x,o)^+}$.
\begin{nota}
We decompose 
$\Gamma_{(x,o)^+}$ into three distinct segments:
   \begin{align*}
\Gamma_{(x,o)^+}=\Gamma_{(x,o)^+}^1+\Gamma_{(x,o)^+}^2+\Gamma_{(x,o)^+}^3  
   \end{align*}
where
\begin{align*}
   \Gamma_{(x,o)^+}^1=[-10(\cosh\gamma+1)+\cosh\gamma, -1]^+, 
   \end{align*}
   \begin{align*}
       \Gamma_{(x,o)^+}^2=  [-1, 1]^+ 
   \end{align*}
   and 
   \begin{align*}
    \Gamma_{(x,o)^+}^3=[1, \cosh\gamma-\frac{\cosh\gamma-1}{4(\cosh\gamma+2)^4}]^+.
\end{align*} 
\end{nota}

This stratification enables us to isolate distinct regions of the contour where the behavior of $w(z)$, $w(z)-u$ and $g(z)$ can be analyzed individually.

The following lemma provides the necessary lower bounds for $|w(z)|$ and $|w(z)-u|$ when $z\in\Gamma^0$ and $u\in [-\delta, \delta]$, serving as a foundational result for our subsequent analysis.

 \begin{lem}\label{Le:|w||w-u|1}
For $z\in\Gamma^0$ and $u\in [-\delta, \delta]$, where $\delta$ as in equation \eqref{Eq:delta},
we have
 \begin{align}\label{Eq:le|w|}
|w(z)|\geqslant 2\delta
\end{align}
and 
\begin{align}\label{Eq:le|w-u|}
 | w(z)-u| \geqslant \delta.
\end{align}

\end{lem}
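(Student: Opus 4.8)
The plan is to establish the two inequalities by examining $w(z)$ separately on each piece of the decomposition $\Gamma^0=\Gamma_x^0+\Gamma_{(x,o)^{\pm}}+\Gamma_o^0$, and to reduce \eqref{Eq:le|w-u|} to \eqref{Eq:le|w|}. Indeed, once we know $|w(z)|\geqslant 2\delta$ everywhere on $\Gamma^0$, the triangle inequality gives $|w(z)-u|\geqslant |w(z)|-|u|\geqslant 2\delta-\delta=\delta$ for all $u\in[-\delta,\delta]$, so the bulk of the work is the single estimate \eqref{Eq:le|w|}. Recalling from \eqref{Eq:wz1} that $w(z)^2=f(z)-f(z_+)=-\log\frac{z^2-1}{z-x}+\log 2+\gamma$, the bound $|w(z)|\geqslant 2\delta$ is equivalent to $|f(z)-f(z_+)|\geqslant 4\delta^2$, which is the quantity I would actually control.

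First I would handle the two arcs, which are the easy cases because they are bounded away from the saddle $z_+$. On the small inner circle $\Gamma_x^0$ of radius $r_x=\frac{\cosh\gamma-1}{4(\cosh\gamma+2)^4}$ centered at $x=\cosh\gamma$, I would Taylor-expand $f$ about $z_+$ (using $f(z)-f(z_+)=\tfrac12 f''(z_+)(z-z_+)^2+\cdots$ with $f''(z_+)=\frac{2}{1-e^{2\gamma}}$) and bound the distance $|z-z_+|$ from below on that circle, then show the resulting lower bound on $|f(z)-f(z_+)|$ dominates $4\delta^2$ by virtue of the definition of $\delta$ in \eqref{Eq:delta}; the factor $(\cosh\gamma+2)^4$ in $r_x$ is chosen precisely to make the higher-order terms negligible. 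On the large outer circle $\Gamma_o^0$ of radius $10(\cosh\gamma+1)$, the point $z$ is far from all three logarithmic singularities $\pm1,x$, so $\frac{z^2-1}{z-x}\approx z$ is large, forcing $\operatorname{Re}w(z)^2=-\log\big|\frac{z^2-1}{z-x}\big|+\log2+\gamma$ to be very negative and hence $|w(z)^2|\geqslant 4\delta^2$; here the last entry in the minimum defining $\delta$, namely $\tfrac12\sqrt{\log\frac{(\cosh\gamma+2)^2}{\cosh\gamma(\cosh\gamma-1)}}$, together with the fourth-root control, is what guarantees the claim.

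The main obstacle is the horizontal segment $\Gamma_{(x,o)^{\pm}}$, where $z$ passes along the real axis through the branch cuts at $\pm1$ and approaches $x$, so I would use the notation-level partition $\Gamma_{(x,o)^+}=\Gamma_{(x,o)^+}^1+\Gamma_{(x,o)^+}^2+\Gamma_{(x,o)^+}^3$ and argue on each subsegment. On $\Gamma_{(x,o)^+}^3=[1,x-r_x]^+$, which runs up toward the saddle, the relevant bound again comes from the Taylor estimate at $z_+$ combined with the lower bound $|z-z_+|\geqslant\ $(something like)$\ r_x$ coming from the endpoint $x-r_x$, and here the entry $\sqrt{2\gamma}$ in \eqref{Eq:delta} enters to cover the region near $z_+=e^{-\gamma}$. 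On $\Gamma_{(x,o)^+}^2=[-1,1]^+$, I would work directly with $w(z)^2=-\log\frac{z^2-1}{z-x}+\log2+\gamma$ and track the boundary value from the upper side of the cut, using that $\log\frac{z^2-1}{z-x}$ acquires a controlled imaginary part there, so that either the real or the imaginary part of $w(z)^2$ is bounded below by $4\delta^2$; the entry $\sqrt{\log[4(\cosh\gamma+1)]-\gamma}$ in $\delta$ is tailored to this interval. On $\Gamma_{(x,o)^+}^1=[-10(\cosh\gamma+1)+x,-1]^+$, the point is again far from $x$ and to the left of $-1$, so $|\frac{z^2-1}{z-x}|$ is moderate-to-large and the same $\operatorname{Re}w(z)^2$ argument applies. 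The genuine difficulty is that the logarithm is singular at $\pm1$ and the estimates must hold uniformly as $\varepsilon\to0^+$ on the $\pm$ side of the cut; I expect the careful bookkeeping of the one-sided limits (in the $(c,d)^\pm$ notation introduced in the excerpt) and verifying that each entry of the minimum in \eqref{Eq:delta} exactly matches the worst case on its corresponding subsegment to be the technical heart of the proof, and I would defer the longest of these subsegment computations to Appendix \ref{APP:L}.
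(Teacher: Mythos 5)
Your reduction of \eqref{Eq:le|w-u|} to \eqref{Eq:le|w|} by the triangle inequality, and your plan to go piece by piece over $\Gamma^0=\Gamma_x^0+\Gamma_{(x,o)^{\pm}}+\Gamma_o^0$ with the further split $\Gamma_{(x,o)^+}=\Gamma_{(x,o)^+}^1+\Gamma_{(x,o)^+}^2+\Gamma_{(x,o)^+}^3$, are exactly the paper's strategy. But the mechanisms you propose on the individual pieces are wrong in ways that would sink the proof. The two fatal gaps are on the horizontal segment. On $\Gamma_{(x,o)^+}^2=[-1,1]^+$ the quantity $\frac{z^2-1}{z-x}$ is real and positive (both factors are negative), so $w(z)^2=f(z)-f(z_+)$ is \emph{real} there --- there is no ``controlled imaginary part'' to exploit. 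The real function $w^2$ decreases from $+\infty$ to its minimum at the \emph{other} saddle $z_-=e^{-\gamma}\in(0,1)$ and then increases back to $+\infty$; the binding constraint is $w^2(z_-)=f(z_-)-f(z_+)=2\gamma$, which is precisely why $\sqrt{2\gamma}$ appears in \eqref{Eq:delta} (you assigned $\sqrt{2\gamma}$ to the wrong segment, and assigned $\sqrt{\log[4(\cosh\gamma+1)]-\gamma}$ to this one when it actually handles the outer circle $\Gamma_o^0$). Without identifying $z_-$ as the minimizer you have no bound at all near $z_-$. Conversely, on $\Gamma_{(x,o)^+}^1=[-10(\cosh\gamma+1)+\cosh\gamma,-1]^+$ your real-part argument fails: as $z$ runs toward $-1$, $\bigl|\frac{z^2-1}{z-x}\bigr|$ decreases from a large value to $0$, so $\operatorname{Re}w(z)^2$ changes sign and vanishes somewhere on the segment, giving nothing there. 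The paper's observation is that on the upper side of the cut $\operatorname{Im}\bigl(f(z)-f(z_+)\bigr)=-\pi$ (the three logarithms contribute $i\pi$, $-i\pi$, $-i\pi$), so writing $w=\mu+i\nu$ gives $2\mu\nu=-\pi$ and $|w|^2=\mu^2+\nu^2\geqslant 2|\mu\nu|=\pi\geqslant 4\delta^2$; the same branch-cut jump (with $+\pi$) is what handles $\Gamma_{(x,o)^+}^3$.

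Your treatment of $\Gamma_x^0$ and $\Gamma_{(x,o)^+}^3$ via a Taylor expansion of $f$ about $z_+$ is also off target. The circle $\Gamma_x^0$ is centered at the singularity $x=\cosh\gamma$ of $f$, not at the saddle, and the reason $|w|$ is large there is simply that $\log(z-x)\to-\infty$: the paper bounds $|w(z)|^2\geqslant|\log r|-|\log((\cosh\gamma+r)^2+1)|-\log(2e^{\gamma})$ with $r=\frac{\cosh\gamma-1}{4(\cosh\gamma+2)^4}$, and the power $(\cosh\gamma+2)^4$ is there to make $|\log r|$ beat the other two terms and leave $\log\frac{(\cosh\gamma+2)^2}{\cosh\gamma(\cosh\gamma-1)}$ --- it has nothing to do with Taylor remainders. (A Taylor expansion about $z_+$ is useless here anyway: its radius of convergence is $|z_+-x|=\sinh\gamma$, and $\Gamma_x^0$ hugs that boundary.) Likewise $\Gamma_{(x,o)^+}^3=[1,\cosh\gamma-r]^+$ is nowhere near $z_+=e^{\gamma}$, which lies to the \emph{right} of $x$ (the point $e^{-\gamma}$ you name is $z_-$), and the estimate there is the $\operatorname{Im}w^2=\pi$ argument above, not a local expansion. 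Your $\Gamma_o^0$ estimate is the one piece whose mechanism is essentially correct, but even there you cite the wrong entry of $\delta$.
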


The following lemma provides an estimate for $g(z)$, when $z\in\Gamma^0$ and $z\in\Gamma_{11}$. 
\begin{lem}\label{Le:g1}
Assuming that $-1<\alpha\leqslant 0$ and $\beta\leqslant \alpha$, the following estimates for $|g(z)|$ hold for $z$ in different subregions of $\Gamma^0$;
\begin{enumerate}
\item For $z\in\Gamma_x^0$, we have
\begin{equation}\label{Eq:Gax0g}
    |g(z)|\leqslant \frac{2(\cosh\gamma-1)^{\alpha}}{z-\cosh\gamma}.
\end{equation}

\item  For $z\in \Gamma_{(x,o)^+}^1$, we obtain
\begin{equation}\label{Eq:Gaxo1g}
    |g(z)|\leqslant \frac{|z+1|^{\beta}}{\cosh\gamma+1}.
\end{equation}
For $z\in \Gamma_{(x,o)^+}^2$, it yields 
\begin{equation}\label{Eq:Gaxo2g}
  |g(z)|\leqslant \frac{|z-1|^{
  \alpha}|z+1|^{\beta}}{\cosh\gamma-1}.
\end{equation}
For $z\in \Gamma_{(x,o)^+}^3$, one obtains
\begin{equation}\label{Eq:Gaxo3g}
  |g(z)|\leqslant \frac{4(\cosh\gamma+2)^4}{\cosh\gamma-1}|z-1|^{
  \alpha}.
\end{equation}

\item For $z\in \Gamma_{o}^0 $, we have 
\begin{equation}\label{Eq:Gao0g}
  |g(z)|\leqslant \frac{1}{10(\cosh\gamma+1)}.
\end{equation}

\item When $z\in\Gamma_{11}$, we achieve  
\begin{equation}\label{Eq:Ga11g}
  |g(z)|\leqslant \Big(\frac{e^{\gamma}-1}{e^{\gamma}}\Big)^{\alpha}\frac{1}{\sinh \gamma}.
\end{equation}
For $z\in[e^{-\gamma},1]$, it gives 
   \begin{equation}\label{Eq:e-ga1g}
  |g(z)|\leqslant \frac{1}{\cosh\gamma-1}|z-1|^{
  \alpha}.
\end{equation}
\end{enumerate}
   
\end{lem}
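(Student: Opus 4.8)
The plan is to work throughout from the pointwise identity
\[
|g(z)|=\frac{|z-1|^{\alpha}\,|z+1|^{\beta}}{|z-\cosh\gamma|},
\]
and to exploit the sign hypotheses $-1<\alpha\leqslant0$ and $\beta\leqslant\alpha\leqslant0$. Since both exponents are nonpositive, the maps $t\mapsto t^{\alpha}$ and $t\mapsto t^{\beta}$ are nonincreasing, so any \emph{lower} bound on $|z-1|$ or $|z+1|$ converts at once into an \emph{upper} bound on $|z-1|^{\alpha}$ or $|z+1|^{\beta}$; in particular, whenever the base is at least $1$ the corresponding power is at most $1$. Thus on each piece it will suffice to produce a lower bound for $|z-\cosh\gamma|$ together with the needed lower bounds on $|z\pm1|$, and then to assemble them.

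For the three straight segments and the large circle the required distances can be read off directly. On $\Gamma_{(x,o)^+}^1$ one has $z\leqslant-1$, so $|z-1|\geqslant2$ (hence $|z-1|^{\alpha}\leqslant1$) and $|z-\cosh\gamma|=\cosh\gamma-z\geqslant\cosh\gamma+1$, giving \eqref{Eq:Gaxo1g}; on $\Gamma_{(x,o)^+}^2$ one has $|z-\cosh\gamma|\geqslant\cosh\gamma-1$, giving \eqref{Eq:Gaxo2g}; on $\Gamma_{(x,o)^+}^3$ and on $[e^{-\gamma},1]$ one has $z\geqslant1$, so $|z+1|^{\beta}\leqslant1$, while $|z-\cosh\gamma|$ is bounded below by the radius $\frac{\cosh\gamma-1}{4(\cosh\gamma+2)^4}$ of $\Gamma_x^0$ and by $\cosh\gamma-1$ respectively, giving \eqref{Eq:Gaxo3g} and \eqref{Eq:e-ga1g}. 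On $\Gamma_o^0$ the triangle inequality gives $|z\pm1|\geqslant10(\cosh\gamma+1)-(\cosh\gamma+1)\geqslant1$, so both powers are at most $1$ while $|z-\cosh\gamma|=10(\cosh\gamma+1)$, giving \eqref{Eq:Gao0g}.

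On the small circle $\Gamma_x^0$ I would argue in the same spirit but retain the factor $2$ to absorb the perturbation by the tiny radius $r=\frac{\cosh\gamma-1}{4(\cosh\gamma+2)^4}$. There $|z-\cosh\gamma|=r$ and $|z+1|\geqslant(\cosh\gamma+1)-r>1$ gives $|z+1|^{\beta}\leqslant1$, while $|z-1|\geqslant(\cosh\gamma-1)(1-t)$ with $t=\frac{1}{4(\cosh\gamma+2)^4}<\tfrac12$ gives
\[
|z-1|^{\alpha}\leqslant(\cosh\gamma-1)^{\alpha}(1-t)^{\alpha}\leqslant(\cosh\gamma-1)^{\alpha}\,\frac{1}{1-t}\leqslant2(\cosh\gamma-1)^{\alpha},
\]
the middle inequality using $-1\leqslant\alpha\leqslant0$; this yields \eqref{Eq:Gax0g}.

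The main work, and the step I expect to be the only genuine obstacle, is the arc $\Gamma_{11}$, where $z=\cosh\gamma+\sinh\gamma\,e^{i\theta}$ and $|z-\cosh\gamma|=\sinh\gamma$. The key is the pair of identities, obtained by expanding the squared moduli and using $\sinh^2\gamma=(\cosh\gamma-1)(\cosh\gamma+1)$,
\[
|z\mp1|^{2}=2(\cosh\gamma\mp1)\bigl(\cosh\gamma+\sinh\gamma\cos\theta\bigr).
\]
Since $\theta\in(-\pi,-\eta)$ forces $\cos\theta>-1$, the common factor exceeds $\cosh\gamma-\sinh\gamma=e^{-\gamma}$, so, using the elementary evaluations $2(\cosh\gamma\mp1)e^{-\gamma}=(1\mp e^{-\gamma})^{2}$,
\[
|z-1|^{2}>(1-e^{-\gamma})^{2},\qquad |z+1|^{2}>(1+e^{-\gamma})^{2}\geqslant1.
\]
Hence $|z+1|^{\beta}\leqslant1$ and $|z-1|^{\alpha}\leqslant(1-e^{-\gamma})^{\alpha}=\bigl(\tfrac{e^{\gamma}-1}{e^{\gamma}}\bigr)^{\alpha}$ by monotonicity, and dividing by $\sinh\gamma$ gives \eqref{Eq:Ga11g}. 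The delicate point is precisely the restriction $\cos\theta>-1$, which keeps the lower bounds on $|z\pm1|$ away from degeneracy; everything else is bookkeeping with the nonpositivity of $\alpha$ and $\beta$.
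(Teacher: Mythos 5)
Your proof is correct and follows essentially the same route as the paper: on every piece you lower-bound $|z-\cosh\gamma|$ and $|z\pm1|$ and then invoke the monotonicity of $t\mapsto t^{\alpha}$, $t\mapsto t^{\beta}$ for nonpositive exponents, with your explicit identity $|z\mp1|^2=2(\cosh\gamma\mp1)(\cosh\gamma+\sinh\gamma\cos\theta)$ on $\Gamma_{11}$ being just a computational restatement of the paper's observation that the extremes of $|z\mp1|$ on that circle occur at the endpoints $e^{\pm\gamma}$. The only blemish is the claim that $z\geqslant1$ on $[e^{-\gamma},1]$ (there $z\leqslant1$), but the conclusion you need, $|z+1|\geqslant1+e^{-\gamma}>1$ and hence $|z+1|^{\beta}\leqslant1$, still holds, so nothing is affected.
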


Now we are ready to estimate $|C_{p}(u)|$ defined in \eqref{Eq:Cpu1} and $|\A_j(e^{\gamma})|$ in \eqref{Eq:Aj1}.

\begin{lem}\label{Le:Cpo}
The following estimates hold  for $-1<\beta\leqslant \alpha\leqslant 0$.
\begin{enumerate}
    \item For any $u\in [-\delta, \delta]$, we have 
    \begin{align}\label{Eq:Cp}
|C_{p}(u)|
&\leqslant \frac{ 10(\cosh\gamma+2)^5}{(\beta+1)(\cosh\gamma-1)} \cdot\frac{1}{4^p\delta^{2p+1}}.
 \end{align} 
 \item For $j=0,1,\cdots$, we derive
 \begin{align}\label{Eq:erAj}
    |\A_j(e^{\gamma})|\leqslant   \frac{\Gamma(j+1/2)\sqrt{(e^{2\gamma}-1)}}{4\pi^{3/2}e^{\gamma}(e^{\gamma}-1)^{\alpha}(e^{\gamma}+1)^{\beta}}\cdot\frac{ 10(\cosh\gamma+2)^5}{(\beta+1)(\cosh\gamma-1)}\cdot \frac{1}{(2\delta)^{2j+1}}.
 \end{align}

\end{enumerate}

\end{lem}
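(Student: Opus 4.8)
The plan is to observe that both estimates reduce to a single ``master'' bound on the total-variation integral $\int_{\Gamma^0}|g(z)|\,|dz|$, after which Lemma~\ref{Le:|w||w-u|1} supplies all the powers of $\delta$. Concretely, for part (1) I would bound the modulus of the integrand in \eqref{Eq:Cpu1} by $\frac{|g(z)|}{|w(z)|^{2p}|w(z)-u|}$ and invoke the lower bounds $|w(z)|\geqslant 2\delta$ and $|w(z)-u|\geqslant\delta$ to factor out $\frac{1}{(2\delta)^{2p}\delta}=\frac{1}{4^p\delta^{2p+1}}$, leaving exactly $\int_{\Gamma^0}|g(z)|\,|dz|$. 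For part (2), the same argument applied to \eqref{Eq:Aj1} with $|w(z)|\geqslant 2\delta$ pulls out the prefactor $\frac{\Gamma(j+1/2)\sqrt{e^{2\gamma}-1}}{4\pi^{3/2}e^{\gamma}(e^{\gamma}-1)^{\alpha}(e^{\gamma}+1)^{\beta}}\cdot\frac{1}{(2\delta)^{2j+1}}$, again reducing to the same integral. Thus both \eqref{Eq:Cp} and \eqref{Eq:erAj} follow once I prove
\[
\int_{\Gamma^0}|g(z)|\,|dz|\leqslant\frac{10(\cosh\gamma+2)^5}{(\beta+1)(\cosh\gamma-1)}.
\]

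To establish this master bound I would split $\Gamma^0=\Gamma_x^0+\Gamma_{(x,o)^{\pm}}+\Gamma_o^0$ and estimate each piece using Lemma~\ref{Le:g1}. On the small circle $\Gamma_x^0$, of radius $\frac{\cosh\gamma-1}{4(\cosh\gamma+2)^4}$, the bound \eqref{Eq:Gax0g} and the circumference cancel the radius, giving a contribution $4\pi(\cosh\gamma-1)^{\alpha}$. On the large circle $\Gamma_o^0$, of radius $10(\cosh\gamma+1)$, the bound \eqref{Eq:Gao0g} likewise cancels the radius and yields $2\pi$. For the horizontal segment I would use symmetry to treat only $\Gamma_{(x,o)^+}$ and double the result, then evaluate the three subintegrals: on $\Gamma_{(x,o)^+}^1$ the integral of $|z+1|^{\beta}$ produces $\frac{9^{\beta+1}(\cosh\gamma+1)^{\beta}}{\beta+1}$; on $\Gamma_{(x,o)^+}^2$ the integral reduces to a Beta-function value $\frac{2^{\alpha+\beta+1}\mathbf{B}(\beta+1,\alpha+1)}{\cosh\gamma-1}$; and on $\Gamma_{(x,o)^+}^3$ the integral of $|z-1|^{\alpha}$ gives $\frac{4(\cosh\gamma+2)^4(\cosh\gamma-1)^{\alpha}}{\alpha+1}$.

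The main obstacle is consolidating these five contributions into the compact form with the single factor $\frac{(\cosh\gamma+2)^5}{(\beta+1)(\cosh\gamma-1)}$. The difficulty is the problematic factor $(\cosh\gamma-1)^{\alpha}$ with $\alpha\in(-1,0]$, which blows up as $\gamma\to0^+$. I would control it by the elementary observation that, since $-1\leqslant\alpha\leqslant 0$, one has $(\cosh\gamma-1)^{\alpha}\leqslant\max\{1,(\cosh\gamma-1)^{-1}\}$, splitting into the cases $\cosh\gamma\leqslant 2$ and $\cosh\gamma\geqslant 2$. Combined with $\beta\leqslant\alpha$ (so that $\frac{1}{\alpha+1}\leqslant\frac{1}{\beta+1}$) and a crude Beta bound such as $\mathbf{B}(\beta+1,\alpha+1)\leqslant\frac{4}{\beta+1}$, each term can be absorbed into $\frac{1}{(\beta+1)(\cosh\gamma-1)}$ with a polynomial prefactor in $\cosh\gamma+2$; the generous power $(\cosh\gamma+2)^5$ and the constant $10$ are chosen precisely so as to dominate the sum of these prefactors uniformly in $\gamma>0$. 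This constant-tracking, rather than any conceptual step, is where the bulk of the care is required.
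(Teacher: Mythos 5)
Your proposal is correct and follows essentially the same route as the paper: both arguments factor out $\tfrac{1}{4^p\delta^{2p+1}}$ (resp. $\tfrac{1}{(2\delta)^{2j+1}}$) via Lemma~\ref{Le:|w||w-u|1} and reduce everything to the single bound $\int_{\Gamma^0}|g(z)|\,|dz|\leqslant \tfrac{10(\cosh\gamma+2)^5}{(\beta+1)(\cosh\gamma-1)}$, proved by the same decomposition of $\Gamma^0$ and the same piecewise estimates from Lemma~\ref{Le:g1}. The only cosmetic differences are that the paper bounds the $\Gamma_{(x,o)^+}^2$ integral by splitting $[-1,1]$ at $0$ to get $\tfrac{1}{\alpha+1}+\tfrac{1}{\beta+1}$ rather than invoking the Beta function, and it asserts the final consolidation into $M_g$ without spelling out the case analysis on $(\cosh\gamma-1)^{\alpha}$ that you describe.
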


%%%%%
\begin{proof}
We first prove that 
\begin{equation}\label{Eq:eringg}
    \int_{\Gamma^0}\,| g(z)|dz \leqslant M_g,
\end{equation}
where 
\begin{align}\label{Eq:Mg}
  \notag  M_g=&2\left(\pi +\frac{9}{\beta+1}+\frac{2}{(\beta+1)(\cosh\gamma-1)}+\frac{ 4(\cosh\gamma+2)^4(\cosh\gamma-1)^{\alpha}}{\alpha+1} +2 \pi (\cosh\gamma-1)^{\alpha}\right)\\
  \leqslant &\frac{ 10(\cosh\gamma+2)^5}{(\beta+1)(\cosh\gamma-1)}.  
\end{align}

By inequality \eqref{Eq:Gax0g},  we obtain 
\begin{align*}
|\int_{\Gamma_{x}^0}g(z)dz|\leqslant 2(\cosh\gamma-1)^{\alpha}\left|\int_{\Gamma_{x}^0}\frac{1}{z-\cosh\gamma}dz\right|.
\end{align*}
Since  $\Gamma_x^0=\left\{z| \, |z-\cosh\gamma|=\frac{\cosh\gamma-1}{4(\cosh\gamma+2)^4} \right \}$, it follows that
\begin{align}\label{Eq:gx}
|\int_{\Gamma_{x}^0}g(z)dz|\leqslant 4 \pi (\cosh\gamma-1)^{\alpha}.
\end{align}
 Applying inequality \eqref{Eq:Gaxo1g} and recalling $-1<\beta\leqslant 0$, we have
 \begin{align}\label{Eq:gox1}
\int_{\Gamma_{(x,o)^+}^1}|g(z)||dz|\leqslant \frac{1}{\cosh\gamma+1}|\int_{\Gamma_{(x,o)^+}^1}(z+1)^{\beta}dz|
\leqslant \frac{(9(\cosh\gamma+1))^{\beta+1}}{(\beta+1)(\cosh\gamma+1)}<\frac{9}{\beta+1}.
\end{align}
Note that $1\leqslant |z-1|\leqslant 2$ when $z\in [-1, 0]$ and $1\leqslant |z+1|\leqslant 2$ when $z\in (0, 1)$. Recalling $-1<\alpha\leqslant 0$ and $\beta\leqslant \alpha$, we have $\max \{2^{\alpha}, 1, 2^{\beta}\}=1$.
 It is easy to see from inequality \eqref{Eq:Gaxo2g} that 
\begin{align}\label{Eq:gox2}
\int_{\Gamma_{(x,o)^+}^2}|g(z)|dz\leqslant& \frac{1}{\cosh\gamma-1}\int_{-1}^{1}(1-z)^{\alpha}(z+1)^{\beta}dz
\leqslant\frac{1}{\cosh\gamma-1}\Big(\frac{1}{\beta+1}+\frac{1}{\alpha+1}\Big)\leqslant\frac{2}{(\beta+1)(\cosh\gamma-1)}.
\end{align}

From inequality \eqref{Eq:Gaxo3g}, we obtain
\begin{align}\label{Eq:gox3}
\notag\int_{\Gamma_{(x,o)^+}^3}\, |g(z)|dz\leqslant&\frac{4(\cosh\gamma+2)^4}{\cosh\gamma-1}\left|\int_{\Gamma_{(x,o)^+}^3}(z-1)^{\alpha}dz\right|\\
\leqslant & \frac{4(\cosh\gamma+2)^4}{\cosh\gamma-1}\frac{(\cosh\gamma-1-\frac{\cosh\gamma-1}{4(\cosh\gamma+2)^4})^{\alpha+1}}{\alpha+1}\leqslant \frac{ 4(\cosh\gamma+2)^4(\cosh\gamma-1)^{\alpha}}{\alpha+1}.
\end{align}
By symmetry, we have from inequalities \eqref{Eq:gox1}-\eqref{Eq:gox3} that
\begin{align}\label{Eq:gxo+-}
\int_{\Gamma_{(x,o)^\pm}}\, |g(z)|dz\leqslant  2\left(\frac{9}{\beta+1}+ \frac{2}{(\beta+1)(\cosh\gamma-1)}+\frac{ 4(\cosh\gamma+2)^4(\cosh\gamma-1)^{\alpha}}{\alpha+1}\right).
\end{align}
One may directly check from Lemma \ref{Le:|w||w-u|1} and inequality \eqref{Eq:Gao0g}  that
\begin{equation}\label{Eq:go}
\int_{\Gamma_o^0}\,|g(z)|dz
\leqslant 2\pi.
\end{equation}
 A combination of $\Gamma^0=\Gamma_x^0+\Gamma_{(x,o)^{\pm}}+\Gamma_o^0$ with inequalities \eqref{Eq:gx}, \eqref{Eq:gxo+-} and \eqref{Eq:go} yields inequality \eqref{Eq:eringg}.

1. Now we estimate the error of $C_p(u)$. The equation \eqref{Eq:Cpu1} and Lemma \ref{Le:|w||w-u|1} yields that 
\begin{equation*}
    C_{p}(u)\leqslant \frac{1}{4^p\delta^{2p+1}}\int_{\Gamma^0}\,|g(z)|dz.
\end{equation*}
Therefore, inequality \eqref{Eq:eringg} yields that 
\[
   C_{p}(u)\leqslant \frac{1}{4^p\delta^{2p+1}}M_g.
\]
The inequality \eqref{Eq:Cp} follows from inequality \eqref{Eq:Mg}.

2. It follows from equation \eqref{Eq:Aj1} and Lemma \ref{Le:|w||w-u|1} that
\begin{equation*}
\A_j(e^{\gamma})=\frac{\Gamma(j+1/2)\sqrt{(e^{2\gamma}-1)}}{4\pi^{3/2}e^{\gamma}(e^{\gamma}-1)^{\alpha}(e^{\gamma}+1)^{\beta}}\cdot \frac{1}{(2\delta)^{2j+1}}\int_{\Gamma^0}\,|g(z)|dz. 
\end{equation*}
Therefore, from inequalities \eqref{Eq:eringg} and \eqref{Eq:Mg}, equation \eqref{Eq:erAj} hold.
\end{proof}

%%%%%%%%%
\subsection{Estimation of \( |I_1| \) and \( |I_3| \)}
We now proceed to estimate the integral $|I_1|$.
For $z\in[e^{-\gamma},1]$, we first establish the inequality
 \begin{equation}\label{Eq:f-fz+}
     f(z)-f(z_+)\geqslant f(z_-)-f(z_+)=2\gamma.
 \end{equation}
By substituting the expression for $I_1$	
  in \eqref{Eq:Ij1} and invoking the definition of $\Gamma_1$ in \eqref{Eq:Gam1}, we derive the result 
\begin{align*}
&\left|\frac{2^{n} \sqrt{n\pi(e^{2\gamma}-1)}}{e^{\gamma}(e^{\gamma}-1)^{\alpha}(e^{\gamma}+1)^{\beta}e^{-nf(z_+)}}I_1\right|=\left|\frac{\sqrt{n(e^{2\gamma}-1)}}{2\sqrt{\pi}e^{\gamma}(e^{\gamma}-1)^{\alpha}(e^{\gamma}+1)^{\beta}}  \int_{\Gamma_1}\,e^{-n[f(z)-f(z_+)]}g(z)\,dz\right|\\\notag
\leqslant &\frac{\sqrt{n(e^{2\gamma}-1)}}{2\sqrt{\pi}e^{\gamma}(e^{\gamma}-1)^{\alpha}}\left(\left|\int_{\Gamma_{11}}\,e^{-n[f(z)-f(z_+)]}g(z)\,dz\right|+\left|\int_{e^{-\gamma}}^1\,e^{-n[f(z)-f(z_+)]}g(z)\,dz\right| \right).  
\end{align*}
Applying Remark \ref{Remf} and inequality \eqref{Eq:Ga11g} to the first term of the right-hand side of the above inequality and inequalities \eqref{Eq:e-ga1g} and \eqref{Eq:f-fz+} to the second term of the right-hand side of the above inequality, we obtain the  result
\begin{align*}
\left|\frac{2^{n} \sqrt{n\pi(e^{2\gamma}-1)}}{e^{\gamma}(e^{\gamma}-1)^{\alpha}(e^{\gamma}+1)^{\beta}e^{-nf(z_+)}}I_1\right|\leqslant& \frac{\sqrt{\pi n(e^{2\gamma}-1)}e^{-n\delta}}{2e^{(\alpha+1)\gamma}\sinh\gamma }+\frac{\sqrt{n(e^{2\gamma}-1)}e^{-2n\gamma-\gamma}}{2\sqrt{\pi}(e^{\gamma}-1)^{\alpha}(\cosh\gamma-1) }  \int_{e^{-\gamma}}^1 |z-1|^\alpha \,dz. 
\end{align*}
Straightforward calculation then gives
\begin{align}\label{Eq:I1}
\left|\frac{2^{n} \sqrt{n\pi(e^{2\gamma}-1)}}{e^{\gamma}(e^{\gamma}-1)^{\alpha}(e^{\gamma}+1)^{\beta}e^{-nf(z_+)}}I_1\right|\leqslant \frac{\sqrt{\pi n}e^{-n\delta+\gamma}}2+\frac{\sqrt{n}e^{-2n\gamma}}{(\alpha+1)(\cosh\gamma-1) }.
 \end{align}

By the symmetry in the contour integration, the bound for $|I_3|$ follows identically.

\subsection{ Estimation of \(P_n^{(\alpha,\beta)}(\cosh\gamma)\)}
Considering the error contribution from $I_2$ and utilizing the estimates for $I_1$ and $I_3$, we now proceed to establish the error bound for the Jacobi polynomial approximation $P_n^{(\alpha,\beta)}(\cosh\gamma)$.

\begin{lem}\label{Le:epsilon} 
Let $\varepsilon_p(n,\alpha,\beta)$, $I_1$ and $I_3$ be defined as in the previous sections. We introduce the following quantities
\[
\zeta_p(n,\alpha,\beta):=\varepsilon_p(n,\alpha,\beta)+\frac{2^{n} \sqrt{n\pi(e^{2\gamma}-1)}}{e^{\gamma}(e^{\gamma}-1)^{\alpha}(e^{\gamma}+1)^{\beta}e^{-nf(z_+)}}(I_1+I_3)
\]
and 
\begin{align}\label{Eq:tcp}
c_p:=\left(1+\frac{1}{4}\left( p(p-1)\delta +2p+2\right)\left(\frac{p+1}{\delta^2 e}\right)^{p+1}\right)\frac{e^{2\gamma}}{4^p\delta^{2p+1}}\cdot\frac{ 10(\cosh\gamma+2)^5}{(\beta+1)(\cosh\gamma-1)}.
\end{align}
Then, for any integer $p\geqslant 1$, the following inequality holds 
\begin{align*}
|\zeta_p(n,\alpha,\beta)|\leqslant\frac{c_p\Gamma(p+\frac12)}{n^p}.
\end{align*}

\end{lem}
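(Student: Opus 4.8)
The plan is to bound $|\zeta_p(n,\alpha,\beta)|$ by bounding its two constituent pieces separately: the error term $\varepsilon_p(n,\alpha,\beta)$ coming from $I_2$, and the exponentially small contribution from $I_1+I_3$. Recalling the definition of $\varepsilon_p$ in \eqref{Eq:epsilon}, I would first handle the double-integral term
\[
-\frac{\sqrt{n(e^{2\gamma}-1)}}{4\pi^{3/2}e^{\gamma}(e^{\gamma}-1)^{\alpha}(e^{\gamma}+1)^{\beta}}\int_{-\delta}^{\delta}e^{-nu^2}u^{2p}C_{p}(u)\,du.
\]
Using the bound on $|C_p(u)|$ from \eqref{Eq:Cp} (part 1 of Lemma \ref{Le:Cpo}), which is uniform in $u\in[-\delta,\delta]$, I can pull $|C_p(u)|$ out of the $u$-integral and extend the range of integration to all of $\R$, so that $\int_{-\delta}^{\delta}e^{-nu^2}u^{2p}\,du\leqslant\int_{-\infty}^{\infty}e^{-nu^2}u^{2p}\,du=\Gamma(p+\tfrac12)/n^{p+1/2}$. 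Combined with the prefactor $\sqrt{n}$, this produces a term of order $\Gamma(p+\tfrac12)/n^p$ with the geometric constant from \eqref{Eq:Cp}.

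Next I would control the second piece of $\varepsilon_p$, namely $2\sqrt{n}\sum_{j=0}^{p-1}I_{j,\delta}\A_j(e^{\gamma})$. Here I combine the bound \eqref{Eq:erforIde} on $|I_{j,\delta}|$, which carries the exponentially small factor $e^{-n\delta^2}$, with the bound \eqref{Eq:erAj} on $|\A_j(e^{\gamma})|$ from part 2 of Lemma \ref{Le:Cpo}. Summing over $j$ from $0$ to $p-1$ and using $\Gamma(j+\tfrac12)\leqslant\Gamma(p+\tfrac12)$ together with the geometric-series structure in powers of $(2\delta)^{-1}$, each such term is dominated by the same overall constant times $e^{-n\delta^2}$, which is negligible compared to $n^{-p}$; the factor $(p(p-1)\delta+2p+2)(\frac{p+1}{\delta^2 e})^{p+1}$ appearing in $c_p$ is exactly what absorbs this exponentially small contribution after one bounds $\sqrt{n}\,e^{-n\delta^2}$ in terms of $n^{-p}$ via the elementary inequality $t^{p+1}e^{-t}\leqslant((p+1)/e)^{p+1}$ applied with $t=n\delta^2$.

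Finally, for the $I_1+I_3$ contribution I invoke \eqref{Eq:I1} (and its twin for $I_3$ obtained by symmetry), which gives a bound of the form $C(\sqrt{n}\,e^{-n\delta}+\sqrt{n}\,e^{-2n\gamma})$. Since $\delta\leqslant\tfrac12\sqrt{2\gamma}$ by \eqref{Eq:delta}, one has $\delta^2\leqslant\tfrac{\gamma}{2}$ and $2\gamma\geqslant\delta^2$, so both exponents dominate $n\delta^2$; thus these terms are also $O(\sqrt{n}\,e^{-n\delta^2})$ and can be folded into the same exponentially small estimate, again converted to a multiple of $n^{-p}$ using the $t^{p+1}e^{-t}$ inequality. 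Collecting the three estimates and factoring out $\Gamma(p+\tfrac12)/n^p$ yields the constant $c_p$ exactly as defined in \eqref{Eq:tcp}.

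The main obstacle will be the bookkeeping required to show that all the exponentially small pieces — from $I_{j,\delta}$, from $I_1$, and from $I_3$ — are jointly absorbed into the single correction factor $1+\tfrac14(p(p-1)\delta+2p+2)(\frac{p+1}{\delta^2 e})^{p+1}$ multiplying the leading $C_p$-estimate, rather than producing separate additive constants. The delicate point is the uniform-in-$p$ conversion of $\sqrt{n}\,e^{-n\delta^2}$ into $n^{-p}$: one must choose the correct power of $n\delta^2$ to extract so that the resulting constant matches $c_p$, and verify that the constraints $n\geqslant\max\{p,\tfrac{1}{2\delta}\}$ needed for \eqref{Eq:erforIde} are compatible with the stated hypothesis $p\geqslant 1$ and do not weaken the bound.
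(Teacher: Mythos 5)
Your proposal follows essentially the same route as the paper's proof: a uniform bound on $|C_p(u)|$ pulled out of the $u$-integral extended to $\R$, the bounds \eqref{Eq:erforIde} and \eqref{Eq:erAj} for the $I_{j,\delta}\A_j$ sum, the estimate \eqref{Eq:I1} for $I_1+I_3$ reduced to multiples of $e^{-n\delta^2}$ via $\delta^2\leqslant\max\{\delta,2\gamma\}$, and the conversion of $n^{p+1}e^{-n\delta^2}$ into the factor $\bigl(\frac{p+1}{\delta^2 e}\bigr)^{p+1}$ by maximizing at $n=(p+1)/\delta^2$. The paper carries this out by introducing $\tilde c_p$ and absorbing the additive constants from $I_1$, $I_3$ using $\tilde c_p\geqslant\max\{10e^{\gamma},\tfrac{10}{(\alpha+1)(\cosh\gamma-1)}\}$, exactly the bookkeeping you anticipate.
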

\begin{proof}
Let us define the constant  $\tilde{c}_p$ as follows
\begin{align*}
\tilde c_p:=\frac{e^{2\gamma}}{4^p\delta^{2p+1}}\cdot\frac{ 10(\cosh\gamma+2)^5}{(\beta+1)(\cosh\gamma-1)}.
\end{align*}
This constant arises naturally from Lemma \ref{Le:Cpo} and the subsequent analysis. Utilizing this definition, we first establish bounds for the coefficients $C_p(u)$ and $\A_j(e^{\gamma})$
\begin{align*}
|C_p(u)|\leqslant e^{-2\gamma}\tilde c_p  \qquad \text{and} \qquad |\A_j(e^{\gamma})|\leqslant   \frac{e^{-2\gamma}\Gamma(j+1/2)\sqrt{e^{2\gamma}-1}}{16\pi^{3/2}e^{\gamma}(e^{\gamma}-1)^{\alpha}(e^{\gamma}+1)^{\beta}}\tilde{c}_j.
\end{align*}
These inequalities follow directly from Lemma \ref{Le:Cpo}. 
Next, we substitute these bounds into the expression for $\varepsilon_p(n,\alpha,\beta)$ given in \eqref{Eq:epsilon} 
\begin{align*}
|\varepsilon_p(n,\alpha,\beta)|&\leqslant \frac{e^{-2\gamma}\sqrt{e^{2\gamma}-1}}{4\pi^{3/2}e^{\gamma}(e^{\gamma}-1)^{\alpha}(e^{\gamma}+1)^{\beta}}\left|\tilde c_p\sqrt{n}\int_{-\infty}^{\infty}e^{-nu^2}u^{2p}du+\frac{\sqrt{n}}{2} \sum_{j=0}^{p-1} I_{j,\delta} \Gamma(j+1/2)\tilde{c}_j\right|.
\end{align*}
Applying the inequality
\(
\frac{e^{-2\gamma}\sqrt{e^{2\gamma}-1}}{4\pi^{3/2}e^{\gamma}(e^{\gamma}-1)^{\alpha}(e^{\gamma}+1)^{\beta}}<1
\)
for $-1< \beta\leqslant\alpha\leqslant 0 $, we simplify the bound to
\begin{align*}
|\varepsilon_p(n,\alpha,\beta)|
\leqslant\frac{\tilde c_p \Gamma(p+\frac12)}{n^p}+\frac{\tilde c_p \Gamma(p+\frac12)\sqrt{n}}{2}\sum_{j=0}^{p-1} |I_{j,\delta}|.
\end{align*}
To estimate $\zeta_p(n,\alpha,\beta)$, we combine the above result with inequalities \eqref{Eq:erforIde} and \eqref{Eq:I1}. The result is
\begin{align*}
|\zeta_p(n,\alpha,\beta)|&\leqslant \frac{\tilde c_p\Gamma(p+\frac12)}{n^p}+\frac{\tilde c_p \Gamma(p+\frac12)\sqrt{n}e^{-n\delta^2}}{2}\sum_{j=0}^{p-1}  (j\delta+ 1)+\sqrt{\pi n}e^{-n\delta+\gamma}+\frac{2\sqrt{n}e^{-2n\gamma}}{(\alpha+1)(\cosh\gamma-1) }.
\end{align*}
From the definition of $\delta$, we have $\delta^2\leqslant \max\{\delta,2\gamma\}$.
Therefore, a simple calculation gives
\begin{align*}
|\zeta_p(n,\alpha,\beta)|&\leqslant  \frac{c_p(n)\Gamma(p+\frac12)}{n^p},
\end{align*}
where
\[
c_p(n)=\tilde{c}_p+\left( \frac{\tilde{c}_p}{2}\left(\frac{p(p-1)\delta}{2}+p\right)+\frac{\sqrt{\pi}e^{\gamma}}{\Gamma(p+\frac{1}{2})} +\frac{2}{(\alpha+1)(\cosh\gamma-1) \Gamma(p+\frac{1}{2})} \right)l_p(n)
\]
and 
\[
l_p(n)=n^{p+1}e^{-n\delta^2}.
\]
The function $l_p(n)$ attains its maximum at $n=\frac{p+1}{\delta^2}$
as verified by setting $l_p'(n)=0$. Substituting this critical point and using $\tilde c_p\geqslant\max\{10e^{\gamma},\frac{10}{(\alpha+1)(\cosh\gamma-1)}\}$, we obtain
\[
c_p(n)\leqslant \left(1+\frac{1}{4}\left( p(p-1)\delta +2p+2\right)\left(\frac{p+1}{\delta^2 e}\right)^{p+1}\right)\tilde{c}_p,
\]
for any $n\geqslant 1$. 

This completes the proof. 
\end{proof}

 Now we improve the error estimate to 
\begin{align*}
|\zeta_p(n,\alpha,\beta)|\leqslant \frac{\A_p(e^{\gamma})}{n^{p}}+\frac{c_{p+1}\Gamma(p+\frac32)}{n^{p+1}}= \frac{\hat c_p}{n^p},
\end{align*}
valid for $p\geqslant 1$, where the coefficient
$\hat c_p$  is defined by
\[
\hat c_p=|\A_p(e^{\gamma})|+\left|\frac{c_{p+1}\Gamma(p+\frac32)}{n}\right|.
\]
The constant $ c_{p+1}$ follows from equation \eqref{Eq:tcp}.

We recall the formula
\[ -f(z_+)=\gamma+\log 2 \]
 and the asymptotic expansion derived in equation \eqref{Eq:Ju1}, which establish the main result stated in Theorem \ref{Th:C1}.

\iffalse
When $z\in \Gamma_0^3$, $\varepsilon>0$ is very small,  it can easily see that 
\begin{equation}\label{|w|3}
|w^2(z)|\geqslant \log\frac{1+\sin\gamma}{1-\sin\gamma}
\end{equation}
\begin{equation}\label{z3}
|z|\geqslant -2\tau_+.
\end{equation}
When $z\in \Gamma_0^3$,   $z=s+\varepsilon i$, from \eqref{wz}, we have 
\begin{equation}
\begin{split}
w(z)=&\Big[-\log\big(\frac{s-\varepsilon i}{s^2+\varepsilon^2}+\sec\gamma+\frac{\tan^2\gamma}{4}(s+\varepsilon i)\big)+\log\frac{1+ \sin \gamma}{\cos\gamma}\Big]^{1/2}\\
%=&\Big[-\log\big(\frac{\sec^2\gamma-\sec\gamma}{2}a+x+\frac{\sec\gamma-1}{2}bi\big)+\log\frac{1+ \sin \gamma}{\cos\gamma}\Big]^{1/2}\\
%=&\Big[-\frac12\log\big(\sec\gamma\cos\theta+\sec\gamma+i\sin\theta\big)+\log\frac{1+ \sin \gamma}{\cos\gamma}\Big]^{1/2}\\
=&\Big[-\frac12\log\big[\big((\frac{1}{s^2+\varepsilon^2}+\frac{\tan^2\gamma}{4})s+\sec\gamma\big)^2+\big((-\frac{1}{s^2+\varepsilon^2}+\frac{\tan^2\gamma}{4})\varepsilon\big)^2\big]\\
&+\log\frac{1+ \sin \gamma}{\cos\gamma}-i\arctan\frac{(-\frac{1}{s^2+\varepsilon^2}+\frac{\tan^2\gamma}{4})\varepsilon}{(\frac{1}{s^2+\varepsilon^2}+\frac{\tan^2\gamma}{4})s+\sec\gamma}\Big]^{1/2}
\end{split}
\end{equation}
Use the same method as discussion the case $z\in \Gamma_0^1$, we have 
\begin{equation}\label{imw3}
|w(z)-u|\geqslant |\Im w(z)|
\end{equation}
\fi

\section{Case 2: $0\leqslant x<1$}\label{Sec:C2}
 Assume $x=\cos \gamma$ with $\gamma\in (0, \frac{\pi}2]$.  We rewrite equation \eqref{Eq:Jacobi1} as follows 
\begin{equation}\label{Eq:Ja2}
P_n^{(\alpha,\beta)}(x)=\frac{(-1)^n (1-x)^{-\alpha}(1+x)^{-\beta}}{2^{n+1}\pi i}\int_{\mathcal L} e^{-nf(z)}g(z)dz,
\end{equation}
where 
$f(z)$ is the phase function in equation \eqref{Eq:fz2} 
and 
\begin{equation}\label{Eq:gz2}
g(z)=\frac{(1-z)^{\alpha}(z+1)^{\beta}}{z-x}.
\end{equation}

Since the saddle points \(
z_{\pm}=x\pm i\sqrt{1-x^2}=e^{\pm i\gamma}
\), by selecting the principal branch of the logarithm with a branch cut along $(-\infty, x]\cup [1, +\infty)$, we evaluate $f(z)$ at $z_{\pm}$
\begin{align}\label{Eq:fzpm}
f(z_\pm)=-\log2\pm i(\pi -\gamma).
\end{align}
Next, we compute the second derivative of $f(z)$
\[
f''(z)=-\frac{1}{(z-x)^2}+\frac{2(z^2+1)}{(z^2-1)^2}.
\]
Evaluating at $z_{\pm}$, we find
\[
f''(z_{\pm})=\frac{ e^{\pm (\frac{\pi}2-\gamma)i}}{\sin\gamma}.
\]

Due to symmetry, we focus on the steepest decent contour passing through $z_+$. To describe this contour, we introduce the analytic function
\begin{equation}\label{Eq:wz2}
w(z)=[f(z)-f(z_+)]^{1/2}=\Big[\log\frac{z-x}{1-z^2}+\log2-(\pi -\gamma)i\Big]^{1/2}, 
%\\&=\Big[-\log\big(-z-x+\frac{x^2-1}{x-z}\big)+\log2-(\pi -\gamma)i\Big]^{1/2},
\end{equation}
where the square root is defined with a branch cut ensuring analyticity in a neighborhood of $z_+$.
As $z\to z_+$, we perform a Taylor expansion 
\begin{align*}
f(z)=f(z_+)+\frac{f''(z_+)}{2}(z-z_+)^2+O\big((z-z_+)^3\big).
\end{align*}
Based on the local behavior near $z_+$, we select the square root function in \eqref{Eq:wz2} to maintain analyticity. Specifically, we choose
\begin{align*}
w(z)=\frac{e^{\pm(\frac{\pi }4-\frac{\gamma}2)i}}{\sqrt{2\sin\gamma}}(z-z_+)+O\big((z-z_+)^2\big).
\end{align*}

Following an approach analogous to Case 1, we examine the contour $ABCDEFGHIJKLMN$ shown in Fig. \ref{Fig:zcon2}. This contour is symmetric about the real axis, reflecting the parity properties of the original function $f(z)$. Given that the saddle point $z_+$ plays a central role, the image of its neighboring area will be duplicated in the $z$-plane. To prevent multi-valuedness, we partition the upper half of the $z$-plane into two distinct regions that pass through 
$z_+$. The corresponding images $f(z)$ and $w(z)$ of these upper half regions in the 
$z$-plane  are illustrated in  Figs. \ref{Fig:f2} and \ref{Fig:w2}.

\begin{figure}[H]
\center
\includegraphics[scale=0.7] {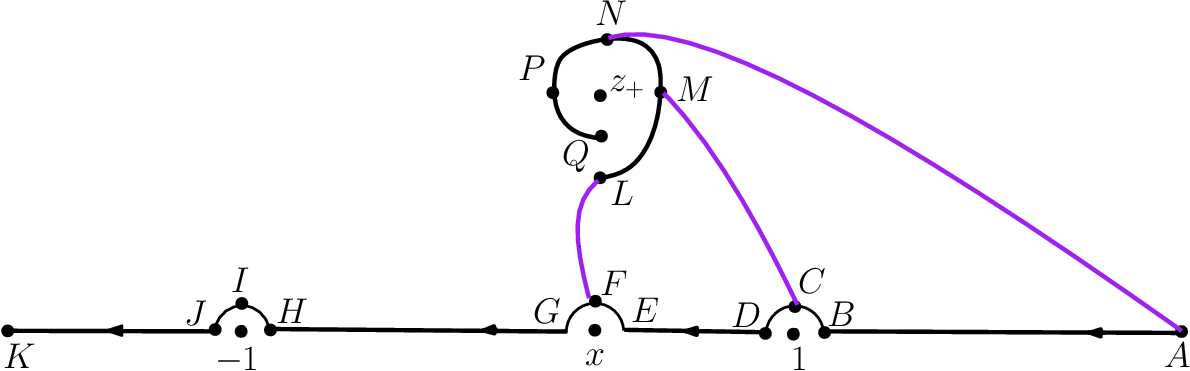}
 \caption{ The $z-$plane.}    
  \label{Fig:zcon2} 
\end{figure}
%%%%%%%%%%%%%%%
\begin{figure}[H]
\center
\includegraphics[scale=0.6]  {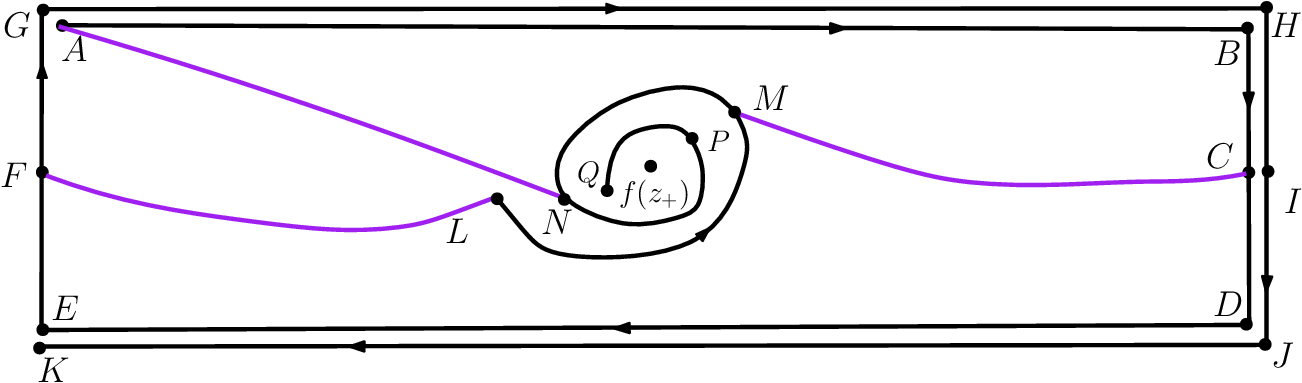}
 \caption{ The $f-$plane.}     
 \label{Fig:f2}
\end{figure}
%%%%%%%%%%%%%%%%
\begin{figure}[H]
\center
\includegraphics[scale=0.55] {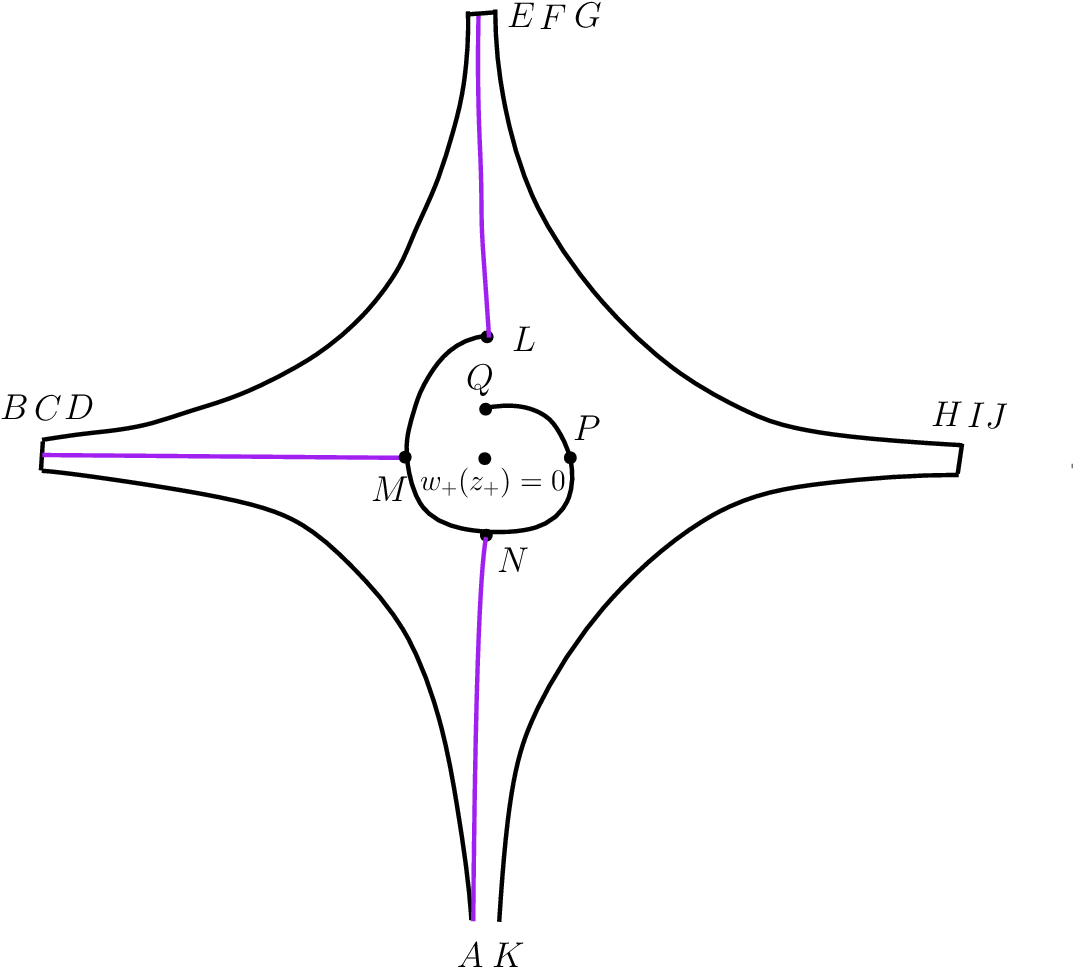}
 \caption{ The $w-$plane.}     
 \label{Fig:w2}
\end{figure}

The arguments of $f(z)$ and $w(z)$ at $A,\,B,\,C,\,D,  E, \, F, \, G, \, H, \, I,\, J, \, K, \, L, \, M, \, N, \, P, \, Q$ can be the ones given in Tab. \ref{Tab:argfw}.
\begin{table}[H]
\centering
\begin{tabular}{ |ccccccccccc| }
	\hline
	{ ~~~ } & A  &B\,C\,D & E\,F\,G &H\,I\,J &K	 & L & M & N & P & Q  \\ \hline
	{ $\arg f$} & $3 \pi$ & $2\pi $ &$\pi$ & 0 & $-\pi$ &$\pi$ & $2\pi$ & $3\pi$ & $4\pi$ & $5\pi$\\ \hline
	{ $\arg w$} & $\frac{3 \pi}2$ & $\pi $ &$\frac{\pi}2$ & 0 & $-\frac{\pi}2$ &$\frac{\pi}2$ & $\pi$ & $\frac{3\pi}2$ & $2\pi$ & $\frac{5\pi}2$ \\ \hline
\end{tabular}
 \caption{ The argument of $f(z)$ and $w(z)$ }
 \label{Tab:argfw}
\end{table}
We will use this table to calculate the modulus of $w(z)$  and the imaginary parts of $w(z)$ in some segments of the $z$-plane.

 For seeking the steepest decent curve $\Gamma^+_{\varepsilon}$ (resp. $\Gamma^-_{\varepsilon}$) through $z_+$ (resp. $z_-$), we need to solve the equation
 %${\rm Im}\big(f(z)-f(z_+)\big)=0$, i.e. ${\rm Im}f(z)=\pi-\gamma$. A simple calculation gives
\begin{align*}
f(z)-f(z_+)=\kappa\geqslant0.
\end{align*}
Recall that $x=\cos \gamma$ with $\gamma\in (0, \frac{\pi}2]$. It follows from equations \eqref{Eq:fz2}  and \eqref{Eq:fzpm} that
\begin{align}\label{Eq:z}
\frac{z-\cos\gamma}{1-z^2}=-\frac{e^{\kappa-i\gamma}}{2}=-\frac{1}{2s},
\end{align}
where $s=e^{-\kappa+i\gamma}$.  Expressing $s$ in polar form as $s=r e^{i \gamma}$, with $0\leqslant r\leqslant 1$, we establish the inequality
\begin{align}\label{Eq:s-x}
\sin\gamma|\cos\gamma|\leqslant |s-x|\leqslant \max\{\sin\gamma,1-\cos\gamma\}\leqslant 1.
\end{align}
%\[(s-x)^2=(1-r)^2\cos^2\gamma-r^2\sin^2\gamma-2(1-r)\cos\gamma\sin\gamma i\]
Solving equation \eqref{Eq:z} yields the parametric solution for $z$
\begin{align}\label{Eq:zs}
z=re^{i\gamma}\pm \sqrt{(re^{i\gamma}-e^{i\gamma})(re^{i\gamma}-e^{-i\gamma})}=re^{i\gamma}\pm \sqrt{(r-1)(re^{2\gamma i}-1)}.
\end{align}
Expressing $z=a+ib$, we substitute it into the first formula of \eqref{Eq:zs} to obtain two distinct solutions
 \begin{equation*}
 \begin{cases}
(a-r\cos\gamma)^2-(b-r\sin\gamma)^2=(1-r)(1-r\cos2\gamma)\\
(a-r\cos\gamma)(b-r\sin\gamma)=-\frac{r(1-r)\sin2\gamma}2.
 \end{cases}
 \end{equation*}
Solving these equations provides the parametric expressions for $a$ and $b$ in terms of $r$
 \begin{equation*}
 \begin{cases}
a=r\cos\gamma\pm\sqrt{\frac{(1-r)(1-r\cos2\gamma+\sqrt{r^2+1-2r\cos2\gamma})}2}\\
b=r\sin\gamma\mp\sqrt{\frac{(1-r)(r\cos2\gamma-1+\sqrt{r^2+1-2r\cos2\gamma})}2}.
 \end{cases}
 \end{equation*}

Now we can represent the steepest decent curve $\Gamma^+_{\varepsilon}$  (see Fig. \ref{steepest_decent2}), which encloses the point $x$ and whose endpoints approach $-1$ and $1$ as $\varepsilon\to 0$. 

\begin{figure}[H]
\centering
\includegraphics[scale=0.6] {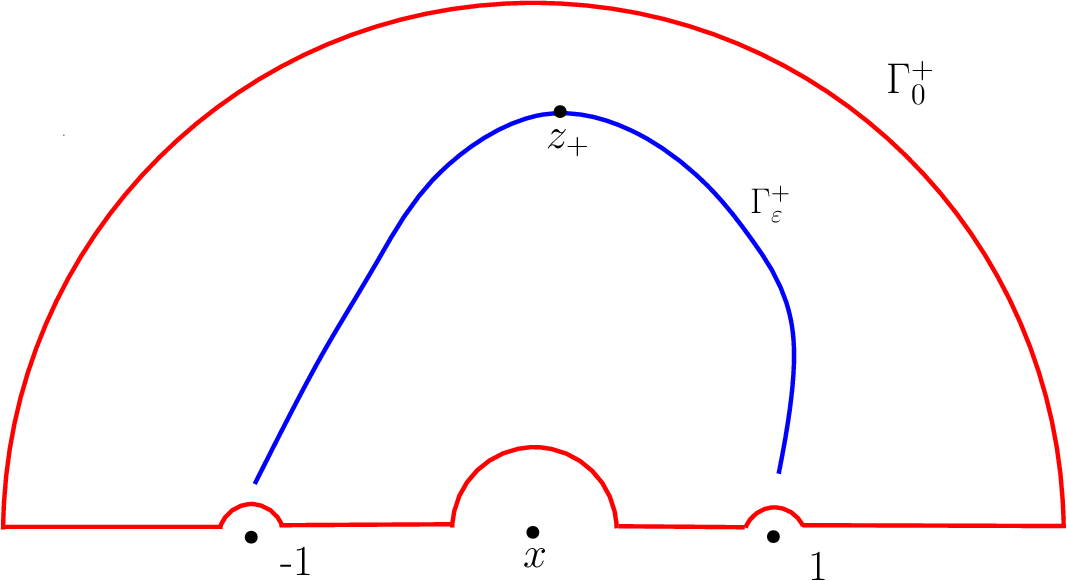}
 \caption{ The steepest decent contour $\Gamma^+_{\varepsilon}$, and contour $\Gamma_0^+$ enclosing it.}  
 \label{steepest_decent2}
\end{figure}
%%%%%%%%%%%%

\begin{lem}\label{Lem:zz-x}
    For all $z\in\Gamma^+_{\varepsilon}$, we obtain the following inequalities 
    \begin{equation}\label{Eq:|z|le1}
       |z|\leqslant 1+\sqrt{2}  
    \end{equation}
    and
    \begin{align}\label{Eq:|z-x|}
    |z-\cos\gamma|\geqslant \frac{\sin^2\gamma}{3}.
\end{align}
\end{lem}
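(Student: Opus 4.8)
The plan is to handle the two bounds by separate but parallel elementary estimates, both built directly on the parametrizations already derived. For \eqref{Eq:|z|le1}, I would start from the explicit representation \eqref{Eq:zs}, namely $z = re^{i\gamma} \pm \sqrt{(r-1)(re^{2i\gamma}-1)}$ with $r\in[0,1]$, and apply the triangle inequality. Since $|re^{i\gamma}| = r \leq 1$, $|r-1| = 1-r \leq 1$, and $|re^{2i\gamma}-1| \leq r+1 \leq 2$, the square-root term is controlled by $\sqrt{(1-r)\,|re^{2i\gamma}-1|} \leq \sqrt{2}$, whence $|z| \leq r + \sqrt{2} \leq 1 + \sqrt{2}$. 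This disposes of \eqref{Eq:|z|le1} with only crude triangle-inequality estimates.

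The bound \eqref{Eq:|z-x|} is more delicate, and the key idea is to convert a lower bound into an upper bound via a product identity. Setting $P := z - \cos\gamma$ and substituting $z = P + \cos\gamma$ into the defining relation \eqref{Eq:z}, which reads $2s(z-\cos\gamma) = z^2 - 1$, I would obtain after using $\cos^2\gamma - 1 = -\sin^2\gamma$ the quadratic
\[
P^2 + 2(\cos\gamma - s)P - \sin^2\gamma = 0 .
\]
By Vieta's formulas its two roots $P_\pm = (s-\cos\gamma) \pm \sqrt{(s-\cos\gamma)^2 + \sin^2\gamma}$ satisfy $P_+ P_- = -\sin^2\gamma$, hence $|P_+|\,|P_-| = \sin^2\gamma$. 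The point $z\in\Gamma^+_\varepsilon$ realizes one of these roots, so it suffices to bound the complementary root from above.

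To bound the roots I would invoke \eqref{Eq:s-x}, which gives $|s-\cos\gamma| = |s-x| \leq 1$. Writing $q := s-\cos\gamma$, the triangle inequality yields $|P_\pm| \leq |q| + \sqrt{|q|^2 + \sin^2\gamma} \leq 1 + \sqrt{2} \leq 3$, using $|q|\leq 1$ and $\sin^2\gamma \leq 1$. Combining this with $|P_+|\,|P_-| = \sin^2\gamma$, whichever root equals $z-\cos\gamma$ satisfies $|z-\cos\gamma| = \sin^2\gamma / |P_{\mp}| \geq \sin^2\gamma/3$, which is exactly \eqref{Eq:|z-x|}.

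The main obstacle is the lower bound \eqref{Eq:|z-x|}: a direct estimate from the explicit formulas for the real and imaginary parts of $z$ would be unwieldy, because $z$ tends to $\pm 1$ as $r\to 0$ and $z-\cos\gamma$ is then governed by a delicate cancellation rather than an obvious positive quantity. The product identity $P_+ P_- = -\sin^2\gamma$ circumvents this entirely by trading the lower bound for the far easier upper bound on the complementary root; recognizing that $z-\cos\gamma$ is a root of a quadratic with fixed product of roots, and exploiting it, is the crux of the argument.
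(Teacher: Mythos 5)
Your proof is correct. The bound \eqref{Eq:|z|le1} is obtained exactly as in the paper, by the triangle inequality applied to the parametrization \eqref{Eq:zs}. For \eqref{Eq:|z-x|} you take a genuinely different, and arguably cleaner, route. The paper starts from $z-\cos\gamma=(s-\cos\gamma)\pm\sqrt{(s-\cos\gamma)^2+\sin^2\gamma}$, squares to get $(z-\cos\gamma)^2-2(z-\cos\gamma)(s-\cos\gamma)=\sin^2\gamma$, deduces the modulus inequality $|z-\cos\gamma|^2+2|z-\cos\gamma|\,|s-\cos\gamma|\geqslant\sin^2\gamma$, solves this quadratic inequality to obtain $|z-\cos\gamma|\geqslant\sqrt{\sin^2\gamma+|s-\cos\gamma|^2}-|s-\cos\gamma|$, and finally verifies (using $|s-\cos\gamma|\leqslant 1$ from \eqref{Eq:s-x}) that the right-hand side is at least $\sin^2\gamma/3$. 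You instead observe that $P=z-\cos\gamma$ is a root of $P^2+2(\cos\gamma-s)P-\sin^2\gamma=0$, so the product of the two roots has modulus $\sin^2\gamma$, and you bound both roots above by $1+\sqrt 2\leqslant 3$. The two arguments are arithmetically equivalent, since the paper's lower bound equals $\sin^2\gamma\big/\big(\sqrt{\sin^2\gamma+|s-\cos\gamma|^2}+|s-\cos\gamma|\big)$, which is precisely your quotient; but the Vieta formulation buys two things: it sidesteps the passage from a complex identity to a modulus inequality (your only delicate step is $\big|\sqrt{q^2+\sin^2\gamma}\big|\leqslant\sqrt{|q|^2+\sin^2\gamma}$, which you justify correctly via $|q^2+\sin^2\gamma|\leqslant|q|^2+\sin^2\gamma$), and it makes transparent where the constant $3$ comes from, namely as any upper bound for $1+\sqrt 2$. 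Both proofs are complete and yield the same estimate.
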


For $ 0<\gamma<\frac\pi{2}$, we keep in main that    
\begin{align}\label{Eq:12singa}
    \frac12\sin\gamma\leqslant \sin\frac\gamma{2}.
\end{align} 
Choosing $0<\varepsilon<2\sin^2\frac\gamma{2}-\frac{\sin^2\gamma}{8}$, a direct computation yields
\begin{align*}
\cos\gamma+\frac{\sin^2\gamma}{8}=1-2\sin^2\frac\gamma{2}+\frac{\sin^2\gamma}{8}<1-\varepsilon. 
\end{align*}

 For convenience, we introduce the following notations.
\begin{nota}\label{No:Gam0+}
We define the contour $\Gamma_0^+$ as the union of the following  eight arcs and line segments, with $\gamma$ defined as before:
% \[\Gamma_0^+=\Gamma_{o}^++\Gamma_{(o,1)}^++\Gamma_{1}^++\Gamma_{(1,x)}^++\Gamma_x^++\Gamma_{(x,-1)}^++\Gamma_{-1}^++\Gamma_{(-1,o)}^+\] 
\begin{align*}  %\label{Eq:Gm0+}
\Gamma_{o}^+&=\bigg \{z|z=\cos\gamma+4e^{1+i\theta},~\theta \in (0,\pi)\bigg\};\\
\Gamma_{(o,1)}^+&=\bigg \{z|z=1+\mu, ~\mu \in [\varepsilon,\cos\gamma+4e-1]^+\bigg\};\\
\Gamma_{1}^+&=\bigg \{z|z=1+\varepsilon e^{i\theta}, ~\theta \in \big(0,\pi)\bigg\};\\
\Gamma_{(1,x)}^+&=\bigg \{z|z=\cos\gamma+\mu, ~\mu \in [\frac{\sin^2\gamma}{8},1-\cos\gamma-\varepsilon]^+\bigg\};\\
\Gamma_x^+&=\bigg \{z|z=\cos\gamma+\frac{\sin^2\gamma}{8}e^{i\theta}, ~\theta \in \big(0,\pi\big)\bigg\}
;\\
\Gamma_{(x,-1)}^+&=\bigg \{z|z=\cos\gamma+\mu, ~\mu \in [-1-\cos\gamma+\varepsilon, -\frac{\sin^2\gamma}{8}]^+\bigg\};\\
\Gamma_{-1}^+&=\bigg \{z|z=-1+\varepsilon e^{i\theta}, ~\theta \in \big(0,\pi)\bigg\};\\
\Gamma_{(-1,o)}^+&=\bigg \{z|z=-1+\mu, ~\mu \in [\cos\gamma-4e+1, 
 -\varepsilon]^+\bigg\}.
\end{align*}
Here, $\varepsilon$  satisfies 
\begin{align}\label{Eq:var2}
0<\varepsilon<\min\{2\sin^2\frac\gamma{2}-\frac{\sin^2\gamma}{8}, e^{-32}, \, \sqrt{1-\cos\gamma},\, \frac{1-\cos\gamma}{18e^{\pi-\gamma}}\},
\end{align}
and tends to $0$. 
\end{nota}

%%%%%%%%%%%%%%
Combining these definitions with Lemma \ref{Lem:zz-x}, we observe that $\Gamma_0^+$ forms a closed contour enclosing $\Gamma_\varepsilon^+$ in its interior (see Fig. \ref{steepest_decent2}).

Applying a similar approach as in \cite{SNWW23}, we analyze the behavior of $w(z)$ as $z$ traverses from $1$ to $-1$ in the upper half-plane along the contour $\Gamma^+_{\varepsilon}$
(see Fig. \ref{Fig:zcon2}). During this traversal,  $w(z)$ moves from $-\infty$ to $+\infty$ as shown in Fig. \ref{Fig:sd2}.

\begin{figure}[H]
\center
\includegraphics[scale=0.7] {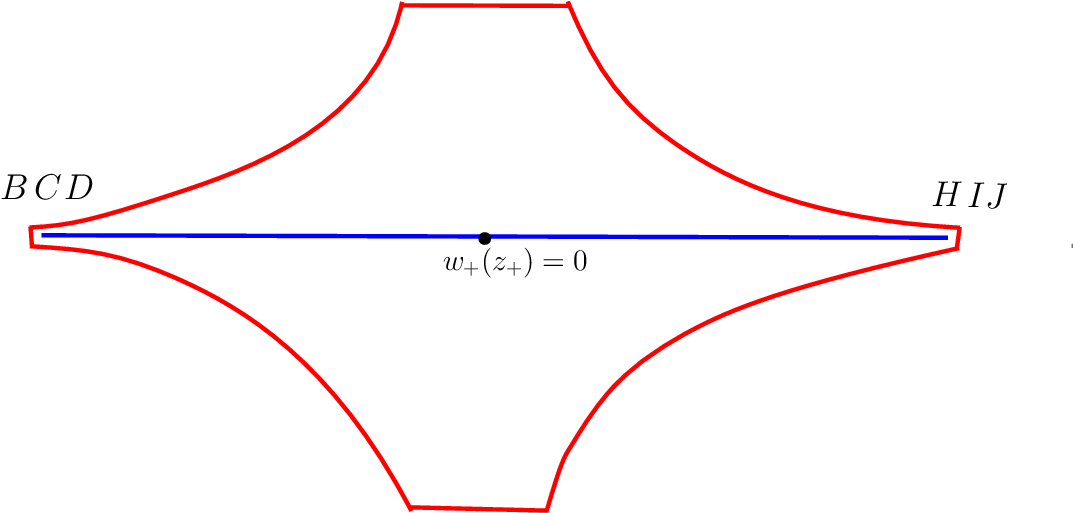}
 \caption{ The contour $\Gamma^0$ and the steepest decent $\Gamma_\varepsilon^+$ in $w_+$-palne.}     
 \label{Fig:sd2}
\end{figure}

We now deform the contour $\mathcal L$ to the steepest decent contour $\Gamma^+_\varepsilon \cup \Gamma^-_\varepsilon$, This deformation allows us to rewrite equation \eqref{Eq:Ja2} as
\begin{equation}\label{Eq:Jc2}
\begin{split}
P_n^{(\alpha,\beta)}(x)
=&(1-x)^{-\alpha}(1+x)^{-\beta}\Big(\frac{(-1)^ne^{-nf(z_+)}}{2^{n+1}\pi i}\lim_{\varepsilon\to0}\int_{\Gamma^+_\varepsilon}\,e^{-n[f(z)-f(z_+)]}g(z)\,dz\\
&+\frac{(-1)^ne^{-nf(z_-)}}{2^{n+1}\pi i}\lim_{\varepsilon\to0}\int_{\Gamma^-_\varepsilon}\,e^{-n[f(z)-f(z_+)]}g(z)\,dz\Big)\\
=&:(1-x)^{-\alpha}(1+x)^{-\beta}\left(I_++I_-\right),
\end{split}
\end{equation}
where $I_+$ and $I_-$ represent the integrals over $\Gamma^+_\varepsilon$ and $\Gamma^-_\varepsilon$, respectively. Due to the symmetry of the steepest descent contours $\Gamma_{\varepsilon}^+$ and $\Gamma_{\varepsilon}^-$ with respect to the real axis, and the property $f(z)=\overline {f(\bar z)}$, the integrals $I_+$ and $I_-$ are complex conjugates. Therefore, it suffices to compute $I_+$.

We introduce a new variable $u$, where $u=w(z)$. For  $z\in \Gamma^+_\varepsilon $, the variable $u$  ranges over the interval  
%$ \left(-{ \sqrt{-\log \varepsilon}}/2, { \sqrt{-\log \varepsilon}}/2\right)$. 
$ (-\frac{ \sqrt{-\log \varepsilon}}2, \frac{ \sqrt{-\log \varepsilon}}2)$. 
Substituting this into the integral, we can express $I_+$ as 
\begin{equation*}\label{Eq:I+}
I_+
=\frac{(-1)^{n}e^{-nf(z_+)}}{2^{n+1}\pi i }\lim_{\varepsilon\to0}\int_{-\frac{ \sqrt{-\log \varepsilon}}2}^{\frac{ \sqrt{-\log \varepsilon}}2}\,e^{-nu^2}\frac{g(w^{-1}(u))}{w'(w^{-1}(u))}\,du.
\end{equation*}
Applying the Cauchy integral formula 
\[
\frac{g(w^{-1}(u))}{w'(w^{-1}(u))}=\frac1{2\pi i}\int_{\Gamma_0^+}\frac{g(z)}{w(z)-u}\,dz, 
\]
 we obtain the following double integral   
\begin{equation}\label{Eq:Ijc2}
I_+
=\frac{(-1)^{n+1}e^{-nf(z_+)}}{2^{n+2}\pi^2 }\lim_{\varepsilon\to0}\int_{-\frac{ \sqrt{-\log \varepsilon}}2}^{\frac{ \sqrt{-\log \varepsilon}}2}\,e^{-nu^2}\int_{\Gamma_0^+}\frac{g(z)}{w(z)-u}\,dz\,du,
\end{equation}
 where $\Gamma_0^+$ is the closed contour (oriented in the clockwise direction) defined in Notation \ref{No:Gam0+}, enclosing  $\Gamma^+_\varepsilon$ in its interior as depicted in Fig. \ref{steepest_decent2}.
To express $I_+$ as a series, we define the coefficient $\A_j(e^{i\gamma})$ 
 \begin{equation}\label{Eq:Aj2}
\begin{split}
\A_j(e^{i\gamma}):=&-\frac{\Gamma(j+1/2)\sqrt{(e^{2i\gamma}-1)}}{4\pi^{3/2}e^{i\gamma}(1-e^{i\gamma})^{\alpha}(e^{i\gamma}+1)^{\beta}}\int_{\Gamma_0^+}\,\frac{g(z)}{w(z)^{2j+1}}dz
\end{split}
\end{equation}
and the error term $\varepsilon_p^+(n,\alpha,\beta)$  
\begin{equation}\label{Eq:epsilon+}
\varepsilon_p^+(n,\alpha,\beta):=-\frac{\sqrt{(e^{2i\gamma}-1)n}}{4\pi^{3/2}e^{i\gamma}(1-e^{i\gamma})^{\alpha}(e^{i\gamma}+1)^{\beta}}
\lim_{\varepsilon\to0}\int_{\frac{ \sqrt{-\log \varepsilon}}2}^{-\frac{ \sqrt{-\log \varepsilon}}2}\,e^{-nu^2}u^{2p}C_{p,+}(u)du,
\end{equation}
with 
\begin{align}\label{Eq:Cpu2}
C_{p,+}(u)=\int_{\Gamma_0^+}\,\frac{g(z)}{w(z)^{2p}(w(z)-u)}dz,~~~
\end{align}
 for any ~$u\in (-\frac{ \sqrt{-\log \varepsilon}}2,\frac{ \sqrt{-\log \varepsilon}}2)$.
For any positive integer $p$, we have
\[\frac1{w(z)-u}=\sum_{j=1}^{2p}\frac{u^{j-1}}{w(z)^j}+\frac{u^{2p}}{w(z)^{2p}[w(z)-u]}\]
%\[\frac1{w(z)-u}=\sum_{k=1}^{p-1}\frac{u^{k-1}}{w(z)^k}+\frac{u^{p-1}}{w(z)^{p-1}[w(z)-u]}\]
and
\begin{equation*}
\int_{\mathbb R}e^{-nu^2}u^{j-1}du=
\begin{cases}0, ~~~&j=2k\\
\frac{\Gamma(k+1/2)}{n^{k+1/2}},~~&j=2k+1.
\end{cases}
\end{equation*}
Consequently, from integrals \eqref{Eq:Ijc2}-\eqref{Eq:epsilon+}, we arrive at
\begin{equation}\label{Eq:I2juer}
\begin{split}
I_+=&\frac{(-1)^{n+1}e^{-nf(z_+)}}{2^{n+2}\pi^2}\lim_{\varepsilon\to0}\Big[\sum_{j=1}^{2p}\int_{\frac{ \sqrt{-\log \varepsilon}}2}^{-\frac{ \sqrt{-\log \varepsilon}}2}\,e^{-nu^2}u^{j-1}\,du\int_{\Gamma_0^+}\frac{g(z)}{w(z)^j}\,dz\\
&+\int_{\frac{ \sqrt{-\log \varepsilon}}2}^{-\frac{ \sqrt{-\log \varepsilon}}2}\,e^{-nu^2}u^{j-1}\int_{\Gamma_0^+}\frac{g(z)}{w(z)^{2p}[w(z)-u]}\,dz\,du\Big]\\
=&\frac{(-1)^ne^{i\gamma}(1-e^{i\gamma})^{\alpha}(e^{i\gamma}+1)^{\beta}e^{-nf(z_+)}}{2^{n} \sqrt{n\pi(e^{2i\gamma}-1)}}\Big\{\sum_{j=0}^{p-1}\frac{\A_j(e^{i\gamma})}{n^{j}}+\varepsilon_p^+(n,\alpha,\beta)\Big\}.
\end{split}
\end{equation}
Specifically, when $p=0$,  an application of the Cauchy integral formula yields
\[\A_0(e^{i\gamma})=-\frac{\Gamma(1/2)\sqrt{(e^{2i\gamma}-1)}}{4\pi^{3/2}e^{i\gamma}(1-e^{i\gamma})^{\alpha}(e^{i\gamma}+1)^{\beta}}\int_{\Gamma_0^+}\,\frac{g(z)}{[f(z)-f(z_+)]^{\frac{1}{2}}}dz=1.\]

%This requires analyzing the behavior of  $\frac{g(z)}{w(z)^{2p}(w(z)-u)}$  for $z\in \Gamma_0^+$.

To derive an upper bound for the remainder term $\varepsilon_p^+(n,\alpha,\beta)$, we first require an estimate for $C_{p,+}(u)$. 
This estimate relies on analyzing the complex function $w(z)$ on $\Gamma_{0}^+$.  Specifically, we must decompose $w(z)$ into its modules and imaginary components on $\Gamma_{0}^+$ to quantify their contributions to the bound for $C_{p,+}(u)$.
Since $\Gamma_{0}^+$ is the union of four arcs $\Gamma_{o}^+$, $\Gamma_{-1}^+$, $\Gamma_{o}^+$ and $\Gamma_{x}^+$ and four line segments, 
we need to split the discussion of $w(z)$ into three lemmas, one for the four arcs and two for four line segments  $\Gamma_{(o,1)}^+$, $\Gamma_{(1,x)}^+$, $\Gamma_{(x,-1)}^+$ and $\Gamma_{(-1,o)}^+$.

Now we discuss the error for $w(z)$ on the four arcs as follows. 

\begin{lem}\label{Le:wcir}
Let w(z) be defined as in equation \eqref{Eq:wz2} and $\Gamma_{-1}^+,\, \Gamma_{1}^+,\, \Gamma_{o}^+, \,\Gamma_{x}^+$ be the four arcs, with the parameter $\varepsilon$, defined in Notation \ref{No:Gam0+}. Then, the following bounds hold.
\begin{enumerate}
\item
   For $z\in \Gamma_{1}^+\cup \Gamma_{-1}^+$,  we have 
     \begin{align}\label{Eq:L|w|}
    |w(z)|  >1;
\end{align}
and for $u\in[-\frac{ \sqrt{-\log \varepsilon}}2, \frac{ \sqrt{-\log \varepsilon}}2]$
\begin{equation}\label{Eq:Gapm1w-u}
    |w(z)-u|>\frac14.
\end{equation}

\item For $z\in \Gamma_{o}^+\cup \Gamma_{x}^+$, we have 
\begin{align}\label{Eq:rew2}
|w(z)| >1;
\end{align}
and for any $u\in \R$
\begin{align}\label{Eq:|w-u|o}
 | w(z)-u|>1.
 \end{align}
\end{enumerate}

\end{lem}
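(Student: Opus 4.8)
The plan is to reduce every bound to a single estimate on $\mathrm{Re}\,w(z)^2$. From \eqref{Eq:wz2} we have $w(z)^2=f(z)-f(z_+)=\log\frac{z-x}{1-z^2}+\log 2-(\pi-\gamma)i$, so that
\[
\mathrm{Re}\,w(z)^2=\log\frac{2|z-x|}{|1-z^2|}.
\]
Writing $w=p+iq$ with $p,q$ real, the identities $p^2-q^2=\mathrm{Re}\,w^2$ and $p^2+q^2=|w^2|$ give $p^2=\tfrac12(|w^2|+\mathrm{Re}\,w^2)$ and $q^2=\tfrac12(|w^2|-\mathrm{Re}\,w^2)$; being symmetric in the two square roots $\pm w$, these hold irrespective of the branch. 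Consequently, if $\mathrm{Re}\,w^2\le -c^2$ then $|\mathrm{Im}\,w|=|q|\ge c$, whence $|w|\ge c$ and, for real $u$, $|w-u|\ge|\mathrm{Im}\,w|\ge c$; whereas if $\mathrm{Re}\,w^2\ge 0$ then $|\mathrm{Re}\,w|=|p|\ge\sqrt{\mathrm{Re}\,w^2}$, so for real $u$, $|w-u|\ge|p|-|u|\ge\sqrt{\mathrm{Re}\,w^2}-|u|$. These two elementary alternatives are the engine of the proof; it remains to place each arc in one of them.

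For part (2) I would show $\mathrm{Re}\,w^2<-1$ on $\Gamma_o^+$ and $\Gamma_x^+$, which by the first alternative (with $c=1$) yields $|\mathrm{Im}\,w|>1$ and hence both $|w|>1$ and $|w-u|>1$ for all real $u$. On $\Gamma_o^+$ one has $|z-x|=4e$ and, since $|z|\ge 4e-1$, both $|1-z|$ and $|1+z|$ exceed $4e-2$; thus $\frac{2|z-x|}{|1-z^2|}\le\frac{8e}{(4e-2)^2}<\frac1e$, because $2e^2-4e+1>0$. On $\Gamma_x^+$, writing $\rho=\frac{\sin^2\gamma}{8}$ and using the two radial worst-case estimates simultaneously, $|1-z|\,|1+z|\ge[(1-\cos\gamma)-\rho][(1+\cos\gamma)-\rho]=\tfrac34\sin^2\gamma+\rho^2\ge\tfrac34\sin^2\gamma$, so $\frac{2|z-x|}{|1-z^2|}\le\frac{2\rho}{\frac34\sin^2\gamma}=\frac13<\frac1e$. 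In both cases $\mathrm{Re}\,w^2=\log\frac{2|z-x|}{|1-z^2|}<-1$.

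For part (1) the arcs $\Gamma_1^+$ and $\Gamma_{-1}^+$ sit in the opposite regime: since $|1-z^2|=O(\varepsilon)$ there while $|z-x|$ stays bounded below, $\mathrm{Re}\,w^2$ is large and positive. On $\Gamma_1^+$, for instance, $|1-z^2|=\varepsilon|2+\varepsilon e^{i\theta}|\le\varepsilon(2+\varepsilon)$ and $|z-x|\ge(1-\cos\gamma)-\varepsilon$, giving $\mathrm{Re}\,w^2\ge\log\frac{2[(1-\cos\gamma)-\varepsilon]}{\varepsilon(2+\varepsilon)}\ge -\log\varepsilon+C_\gamma$ for a constant $C_\gamma$ depending only on $\gamma$; the arc $\Gamma_{-1}^+$ is handled identically with $1-\cos\gamma$ replaced by $1+\cos\gamma$. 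Then $|w|>1$ is immediate from $|w|\ge\sqrt{\mathrm{Re}\,w^2}$, and for $|w-u|>\frac14$ with $|u|\le\frac{\sqrt{-\log\varepsilon}}2$ I would invoke the second alternative, $|w-u|\ge\sqrt{\mathrm{Re}\,w^2}-|u|\ge\sqrt{-\log\varepsilon+C_\gamma}-\frac{\sqrt{-\log\varepsilon}}2$.

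The main obstacle is precisely this last step, where the admissible range of $u$ itself grows with $\varepsilon\to0$. The saving feature is that $\mathrm{Re}\,w^2\sim-\log\varepsilon$ grows four times faster than $\bigl(\tfrac12\sqrt{-\log\varepsilon}\bigr)^2=\tfrac14(-\log\varepsilon)$, so for fixed $\gamma$ and small $\varepsilon$ the difference $\sqrt{\mathrm{Re}\,w^2}-|u|$ is bounded below by roughly $\tfrac12\sqrt{-\log\varepsilon}$ and therefore exceeds $\frac14$; the delicate point is that $C_\gamma$ degrades as $\gamma\to0$, forcing the threshold on $\varepsilon$ to shrink accordingly. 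This is exactly the role of the constraints in \eqref{Eq:var2}, in particular $\varepsilon<e^{-32}$ together with $\varepsilon<\frac{1-\cos\gamma}{18e^{\pi-\gamma}}$, which are designed so that $\sqrt{-\log\varepsilon+C_\gamma}>\frac{\sqrt{-\log\varepsilon}}2+\frac14$; verifying the final numerical inequality then reduces to a monotone one-variable estimate in $L=-\log\varepsilon$. The secondary subtlety, already visible in part (2), is that the naive product of radial minima on $\Gamma_x^+$ only yields $\frac49>\frac1e$, so the sharper identity $|1-z|\,|1+z|\ge\tfrac34\sin^2\gamma+\rho^2$ is essential to push $\mathrm{Re}\,w^2$ strictly below $-1$.
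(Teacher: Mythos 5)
Your part (2) is correct and is essentially the paper's own argument: the paper likewise parametrizes $z=\cos\gamma+re^{i\theta}$, computes $\Re w^2(z)$ explicitly, shows it is $<-1$ on both arcs, and then deduces $|w(z)|\geqslant|\Re w^2(z)|^{1/2}>1$ and $|w(z)-u|\geqslant|\Im w(z)|=\sqrt{(\Re w(z))^2-\Re w^2(z)}>1$. Your route through $q^2=\tfrac12(|w^2|-\Re w^2)$ and the product bound $|1-z|\,|1+z|\geqslant\tfrac34\sin^2\gamma$ on $\Gamma_x^+$ is a clean version of the same computation. Part (1) also follows the paper's strategy: $|w|$ is large because of the $-\log\varepsilon$ singularity of $f$ at $\pm1$, followed by a reverse triangle inequality against $|u|\leqslant\tfrac12\sqrt{-\log\varepsilon}$.

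The gap is exactly at the step you defer. Write $L=-\log\varepsilon$ and $m=-\log(1-\cos\gamma)$. On $\Gamma_1^+$ your constant is $C_\gamma=\log\frac{2[(1-\cos\gamma)-\varepsilon]}{2+\varepsilon}\approx -m$, so $\Re w^2\approx L-m$, and you need $\sqrt{L-m}>\tfrac12\sqrt L+\tfrac14$, i.e.\ essentially $m<\tfrac34 L$. But \eqref{Eq:var2} only supplies $\varepsilon<\frac{1-\cos\gamma}{18e^{\pi-\gamma}}$, i.e.\ $m<L-\log 18-(\pi-\gamma)$, together with $L\geqslant 32$; this permits, for instance, $L=32$ and $m=25$ (take $1-\cos\gamma=e^{-25}$ and $\varepsilon$ just below $e^{-32}$), for which $\sqrt{L-m}\approx 2.6$ while $\tfrac12\sqrt L+\tfrac14\approx 3.08$. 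So the ``monotone one-variable estimate in $L$'' does not close: $C_\gamma$ is a genuinely independent second variable, and the relation actually required is roughly $\varepsilon\lesssim(1-\cos\gamma)^{4/3}$, which is not among the listed constraints. Moreover the difficulty is not an artifact of your method: $\Im w^2$ changes sign along $\Gamma_1^+$ (it equals $\gamma$ on $(1,\infty)^+$ and $-(\pi-\gamma)$ on $(\cos\gamma,1)^+$), so $w$ is genuinely real at some point of that arc with value $\pm\sqrt{\Re w^2}$, and \eqref{Eq:Gapm1w-u} truly forces $\Re w^2>(\tfrac12\sqrt L+\tfrac14)^2$ there. To be fair, the paper's own proof asserts $|w(z)|>\sqrt{-\log\varepsilon/2}$ on $\Gamma_1^+$ ``by the triangle inequality'' and glosses over the same restriction; the harmless resolution is that every subsequent use takes $\varepsilon\to0^+$ with $\gamma$ fixed, so $m$ stays bounded while $L\to\infty$. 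You should therefore either state the bound as holding for all $\varepsilon$ sufficiently small depending on $\gamma$, or add the missing constraint, rather than claim it follows from \eqref{Eq:var2} as written. (The arc $\Gamma_{-1}^+$ is unproblematic, since there $|z-\cos\gamma|\geqslant 1$ and $C_\gamma\approx 0$.)
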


%%%%%%%%%%%%%%%%%%%%%

In order to estimate the imaginary parts of $w(z)$ on $\Gamma_{(o,1)}^+$, $\Gamma_{(1,x)}^+$, $\Gamma_{(x,-1)}^+$ and $\Gamma_{(-1,o)}^+$,
 we need to split these segments into two parts, respectively.  
\begin{rem}\label{rem:Gapsli}
Let the four line segments $\Gamma_{(o,1)}^+$, $\Gamma_{(1,x)}^+$, $\Gamma_{(x,-1)}^+$ and $\Gamma_{(-1,o)}^+$ be defined as before, with $\gamma>0$ and $\varepsilon>0$  being a small parameter. We introduce the following decompositions.
\begin{enumerate}
\item Let $\Gamma_{(o,1)}^{+,l}:=[1+\varepsilon,  1+\sqrt{1-\cos\gamma}]^+$  and $\Gamma_{(o,1)}^{+,r}:=(1+\sqrt{1-\cos\gamma}, \cos\gamma+4e]^+$. We can write
    \[
\Gamma_{(o,1)}^+=\Gamma_{(o,1)}^{+,l}+\Gamma_{(o,1)}^{+,r};
    \] 
   \item    Let $ \Gamma_{(1,x)}^{+,l}:=[\cos\gamma+\frac{\sin^2\gamma}{8},\cos\frac{\gamma}{2} ]^+$ and $ \Gamma_{(1,x)}^{+,r}:=[\cos\frac{\gamma}{2}, 1-\varepsilon ]^+$. Then
    \[
    \Gamma_{(1,x)}^+= \Gamma_{(1,x)}^{+,l}+ \Gamma_{(1,x)}^{+,r};
    \]
    \item Let $\Gamma_{(x,-1)}^{+,l}:=[-1+\varepsilon, 1-\sqrt 3]^+$ and   $\Gamma_{(x,-1)}^{+,r}:=[1-\sqrt 3, \cos\gamma-\frac{\sin^2\gamma}{8}]^+$.
    We have
    \[
\Gamma_{(x,-1)}^+=\Gamma_{(x,-1)}^{+,l}+\Gamma_{(x,-1)}^{+,r};
    \]
   \item Let $\Gamma_{(-1,o)}^{+,l}:= [-4e+\cos\gamma, -1-\sqrt{\cos\gamma+1}]^+$ 
 and  $\Gamma_{(-1,o)}^{+,r}:=(-1-\sqrt{\cos\gamma+1}, -1-\varepsilon]^+$. Then
    \[
    \Gamma_{(-1,o)}^+=\Gamma_{(-1,o)}^{+,l}+\Gamma_{(-1,o)}^{+,r}.
    \]
   
    \end{enumerate}
\end{rem}
\begin{proof}
We only need to prove statements 2 and 3.

 2.  
When $0<\gamma<\frac\pi{2}$, it is easy to see from \eqref{Eq:12singa} that 
\begin{align*}
    \cos\gamma+1-2\cos\frac\gamma2+\frac{\sin^2\gamma}{8}&\leqslant 2\cos^2\frac \gamma 2-2\cos\frac\gamma2 +\frac{1}{2}\sin^2\frac \gamma 2
\leqslant \Big(\frac{1}{2}-\frac{3}{2}\cos\frac{\gamma}{2}\Big)(1-\cos\frac{\gamma}{2})<0.
\end{align*}
% As a consequently,  we have
% \begin{align}
% \cos\gamma+\frac{\sin^2\gamma}{8}<-1+2\cos\frac\gamma2. 
% \end{align}
Thus, choosing $\varepsilon<1-\cos\frac{\gamma}{2}$, we have
\[
\cos\gamma+\frac{\sin^2\gamma}{8}<-1+2\cos\frac\gamma2<\cos\frac{\gamma}{2}<1-\varepsilon.
\]
Therefore, statement 2 holds.

3.  For $0<\gamma<\frac\pi{2}$, $\cos\gamma-\frac{\sin^2\gamma}{8}$ is decreasing.  
 Choosing $\varepsilon<2-\sqrt3$ gives
\begin{align*}
-1+\varepsilon< 1-\sqrt 3< \cos\gamma-\frac{\sin^2\gamma}{8}.
\end{align*}
Hence, statement 3 holds.

\end{proof}

Applying these notations, we have the following remark. 

 \begin{rem}\label{rem:log}
The following properties hold for the logarithmic functions. 
\begin{enumerate}
    \item 
The function $\log\frac{z^2-1}{2(z-\cos\gamma)}$   is strictly increasing along both  $\Gamma_{(o,1)}^+$ and $\Gamma_{(x,-1)}^+$. It has zeros exclusively at  $z=1+2\sin\frac{\gamma}{2}\in \Gamma_{(o,1)}^{+,r}$ and $z=1-2\sin\frac{\gamma}{2}\in\Gamma_{(x,-1)}^{+,r}$.

\item The function $\log\frac{z^2-1}{2(\cos\gamma-z)}$  is strictly decreasing along both $\Gamma_{(1,x)}^+$ and $\Gamma_{(-1,o)}^+$. It has zeros exclusively at  $z=-1+2\cos\frac{\gamma}{2}\in \Gamma_{(1,x)}^{+,l}$, and $z=-1-2\cos\frac{\gamma}{2}\in\Gamma_{(-1,o)}^{+,l}$.
\end{enumerate}
\end{rem}

%For any $u\in (-\frac{ \sqrt{-\log \varepsilon}}2,\frac{ \sqrt{-\log \varepsilon}}2)$, in order to estimate the integrand $\frac{g(z)}{w(z)^{2p}(w(z)-u)}$ in \eqref{Eq:Cpu2} on $\Gamma_{(o,1)}^+$, $\Gamma_{(-1,o)}^+$, $\Gamma_{(x,-1)}^+$ and $\Gamma_{(1,x)}^+$,

 We now calculate the modulus and the imaginary parts of $w(z)$ over the line segments. 
 
 %and the imaginary parts of $w(z)$ in part of these segments, that is, $\Gamma_{(o,1)}^{+,l}$, $\Gamma_{(-1,o)}^{+,r}$, $\Gamma_{(x,-1)}^{+,l}$ and $\Gamma_{(1,x)}^{+,r}$.

 % Now we in a position to  discuss the error for $w(z)$ in the four line segments as follows. 

 \begin{lem}\label{Lem:imw1}
 The following inequalities give lower bounds for the modulus and imaginary part of $w(z)$ across the specified regions defined as before. 
 \begin{enumerate}
     \item 
 For $z\in\Gamma_{(o,1)}^+\cup \Gamma_{(x,-1)}^+ \cup \Gamma_{(1,x)}^+\cup\Gamma_{(-1,o)}^+$, 
    the modulus of $w(z)$ satisfies
 \begin{align}\label{Eq:wz2o-1}
|w(z)|\geqslant \gamma^{\frac12}.
\end{align}

% \item  For   $z\in \Gamma_{(1,x)}^+\cup\Gamma_{(-1,o)}^+$,  the modulus of $w(z)$ satisfies
% \begin{align}\label{Eq:|w|z1xo}
% |w(z)|\geqslant (\pi-\gamma)^{\frac12}.
% \end{align}

\item For $z\in\Gamma_{(o,1)}^{+,l}\cup \Gamma_{(1,x)}^{+,r}$, the imaginary part of $w(z)$ satisfies
\begin{equation}\label{Eq:imwz1o1}
|\Im w(z)|
\geqslant \frac{\gamma\sqrt{\alpha+1}}{580} \Big|\frac{z+1}{2(z-\cos\gamma)}\Big|^{\frac{\alpha+1}2} |z-1|^{\frac{\alpha+1}2}.
\end{equation}

% \item For $z\in \Gamma_{(1,x)}^{+,r}=[\cos\frac{\gamma}{2}, 1-\varepsilon ]$, the imaginary part of $w(z)$ satisfies
% \begin{equation}\label{Eq:imwz2x12}
% |\Im w(z)|
% \geqslant \frac{\sqrt{\alpha+1}}{20(\pi-\gamma)} \Big|\frac{z+1}{2(z-\cos\gamma)}\Big|^{\frac{\alpha+1}2} |z-1|^{\frac{\alpha+1}2}.
% \end{equation}

\item For $z\in \Gamma_{(-1,o)}^{+,r}\cup \Gamma_{(x,-1)}^{+,l}$, the imaginary part of $w(z)$ satisfies
\begin{equation}\label{Eq:imwz2-1} 
|\Im w(z)|
\geqslant \frac{\gamma\sqrt{\beta+1}}{74 \pi(\pi-\gamma)} \Big|\frac{z-1}{2(z-\cos\gamma)}\Big|^{\frac{\beta+1}2} |z+1|^{\frac{\beta+1}2}.
\end{equation}

% \item  For $z\in \Gamma_{(x,-1)}^{+,l}=[-1+\varepsilon, 1-\sqrt 3]$, we have the following inequality:
% \begin{equation}\label{Eq:imwz2-1x}
% |\Im w(z)|
% \geqslant \frac{\gamma\sqrt{\beta+1}}{4(1+\gamma^2)} \Big|\frac{z-1}{2(z-\cos\gamma)}\Big|^{\frac{\beta+1}2} |z+1|^{\frac{\beta+1}2}.
% \end{equation}

\end{enumerate}

 \end{lem}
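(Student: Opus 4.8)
The plan is to reduce all three statements to a study of the real and imaginary parts of $w(z)^2=f(z)-f(z_+)$. Writing $w(z)^2=R+iI$ with
\[
R=\Re\,w(z)^2=\log\Big|\frac{2(z-\cos\gamma)}{1-z^2}\Big|,\qquad I=\Im\,w(z)^2,
\]
I would first note that on each of the four line segments the point $z$ is real, so $\frac{2(z-\cos\gamma)}{1-z^2}$ is a nonzero real number and its (analytically continued) argument lies in $\pi\mathbb{Z}$. Since \eqref{Eq:wz2} gives $I=\arg\frac{2(z-\cos\gamma)}{1-z^2}-(\pi-\gamma)$, it follows that $I\in\gamma+\pi\mathbb{Z}$, whence $|I|\geqslant\gamma$. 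Because $|w(z)|^2=|w(z)^2|=\sqrt{R^2+I^2}\geqslant|I|\geqslant\gamma$, this proves \eqref{Eq:wz2o-1} immediately, with no need to identify the precise integer (the table \ref{Tab:argfw} fixes it if a sharper value is ever wanted; e.g.\ $I=\gamma$ on $\Gamma_{(o,1)}^{+,l}$ and $I=\gamma-\pi$ on $\Gamma_{(-1,o)}^{+,r}$).

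For the imaginary-part bounds I would use the branch-independent identity
\[
|\Im w(z)|^2=\frac{\sqrt{R^2+I^2}-R}{2}=\frac{I^2}{2\big(\sqrt{R^2+I^2}+R\big)} .
\]
The decomposition in Remark \ref{rem:Gapsli} is designed precisely so that $R>0$ on the sub-segments appearing in statements 2 and 3: Remark \ref{rem:log} locates the zeros of $\log\frac{z^2-1}{2(z-\cos\gamma)}$ (equivalently, the zeros of $R$) at the splitting points, and a short monotonicity check places $\Gamma_{(o,1)}^{+,l},\Gamma_{(1,x)}^{+,r}$ (resp.\ $\Gamma_{(-1,o)}^{+,r},\Gamma_{(x,-1)}^{+,l}$) on the side where $\big|\tfrac{2(z-\cos\gamma)}{1-z^2}\big|>1$, i.e.\ $R>0$. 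Granting $R>0$, the elementary bound $\sqrt{R^2+I^2}+R\leqslant 2R+|I|$ together with $|I|\geqslant\gamma$ and the monotonicity of $s\mapsto s^2/(4R+2s)$ yields $|\Im w(z)|^2\geqslant \gamma^2/(4R+2\gamma)$.

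It then remains to compare with the target, and the key observation is that the product of factors on the right-hand sides collapses to a power of $e^{-R}$. For statement 2,
\[
\Big|\frac{z+1}{2(z-\cos\gamma)}\Big|^{\frac{\alpha+1}2}|z-1|^{\frac{\alpha+1}2}
=\Big|\frac{z^2-1}{2(z-\cos\gamma)}\Big|^{\frac{\alpha+1}2}=e^{-R(\alpha+1)/2},
\]
and analogously the right-hand side of \eqref{Eq:imwz2-1} equals $\frac{\gamma\sqrt{\beta+1}}{74\pi(\pi-\gamma)}e^{-R(\beta+1)/2}$. Thus \eqref{Eq:imwz1o1} reduces to $\frac{\gamma^2}{4R+2\gamma}\geqslant \frac{\gamma^2(\alpha+1)}{580^2}e^{-R(\alpha+1)}$. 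Substituting $t=R(\alpha+1)\geqslant0$ turns this into $\frac{580^2 e^{t}}{4t+c}\geqslant1$ with $c=2\gamma(\alpha+1)\leqslant\pi$; since $e^{t}/(4t+c)$ is bounded below by a positive absolute constant on $t\geqslant0$ (its minimum occurs at $t=\max\{0,1-c/4\}$), the factor $580^2$ leaves enormous slack. The inequality \eqref{Eq:imwz2-1} follows the same way after setting $t=R(\beta+1)$ and using $\gamma<\tfrac\pi2$, $\beta+1\leqslant1$.

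The hard part will be the branch bookkeeping of the second paragraph: verifying $I\in\gamma+\pi\mathbb{Z}$ and, above all, that $R>0$ holds on exactly the sub-segments named in statements 2 and 3. This is where Remark \ref{rem:log}, Remark \ref{rem:Gapsli}, and the argument table \ref{Tab:argfw} carry the load — the $l/r$ split is engineered so the splitting points coincide with the sign change of $R$. Once the sign of $R$ is secured, all remaining estimates are the elementary inequalities and the one-variable minimization described above, and the explicit constants $580$ and $74\pi(\pi-\gamma)$ are merely convenient loose choices.
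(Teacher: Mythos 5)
Your proposal is correct, and for parts 2 and 3 it takes a genuinely different route from the paper's own proof. The paper works from the explicit polar representation $w^2(z)=\big(\log^2(\cdot)+\gamma^2\big)^{1/2}e^{i\arg w^2(z)}$ with the piecewise formulas for $\arg w^2$, writes $|\Im w(z)|=|w^2(z)|^{1/4}\big|\sin\big(\tfrac12\arctan\tfrac{\gamma}{\log(\cdot)}\big)\big|$, and bounds this below via $\sin x\geqslant x/2$, $\arctan t\geqslant t/(1+b^2)$, and finally the inequality $|\log t|^{-1/2}>\sqrt{\tau}\,t^{\tau/2}$ of \eqref{Eq:logt}; to make the $\arctan$ step work it must prove a separate lower bound on $|R|=\big|\log\tfrac{z^2-1}{2(z-\cos\gamma)}\big|$ on each of the four sub-segments (with four different admissible values of $b$), and this is exactly where the constants $580$ and $74\pi(\pi-\gamma)$ originate. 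You instead use the exact algebraic identity $(\Im w)^2=I^2/\big(2(\sqrt{R^2+I^2}+R)\big)$, which requires only $R\geqslant 0$ and $|I|\geqslant\gamma$, and you finish with a single one-variable minimization of $e^t/(4t+c)$ on $t\geqslant0$. This is cleaner and uniform across all four sub-segments, and in particular it dispenses entirely with the lower bounds on $|R|$ that the paper needs. Both arguments ultimately rest on the same two structural facts — that $|I|\in\{\gamma,\pi-\gamma\}$ on the real segments (whence $|I|\geqslant\gamma$ for $\gamma\leqslant\tfrac{\pi}{2}$, which also gives part 1 via $|w|^2=|w^2|\geqslant|I|$, exactly as in the paper) and that $R>0$ on precisely the sub-segments named in parts 2 and 3 — and for the latter you correctly defer to Remark \ref{rem:log} and the splitting of Remark \ref{rem:Gapsli}. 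One small imprecision: the zeros of $R$ are not the splitting points themselves but lie in the complementary sub-segments treated in Lemma \ref{Le:imw}; the splitting points $1+\sqrt{1-\cos\gamma}=1+\sqrt{2}\sin\tfrac{\gamma}{2}$, $\cos\tfrac{\gamma}{2}$, $1-\sqrt{3}$, $-1-\sqrt{\cos\gamma+1}$ are chosen strictly on the $R>0$ side of those zeros, which is what your "short monotonicity check" must verify. With that verification (which the paper carries out explicitly), your argument closes with enormous slack in the stated constants.
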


%In order to give a bound for the integrand $\frac{g(z)}{w(z)^{2p}(w(z)-u)}$ in \eqref{Eq:Cpu2} on $\Gamma_{(o,1)}^+$, $\Gamma_{(-1,o)}^+$, $\Gamma_{(x,-1)}^+$ and $\Gamma_{(1,x)}^+$ when $u\in (-\frac{ \sqrt{-\log \varepsilon}}2,\frac{ \sqrt{-\log \varepsilon}}2)$, we need to calculate the imaginary parts of $w(z)$ in the remaining parts of Lemma \ref{Lem:imw1} of these segments, i.e., $\Gamma_{(o,1)}^{+,r}$, $\Gamma_{(-1,o)}^{+,l}$, $\Gamma_{(x,-1)}^{+,r}$ and $\Gamma_{(1,x)}^{+,l}$.
 
\begin{lem}\label{Le:imw}
For  $z \in \Gamma_{(o,1)}^{+,r}\cup\Gamma_{(x,-1)}^{+,r}\cup  \Gamma_{(-1,o)}^{+,l}\cup \Gamma_{(1,x)}^{+,l}$, we have the following bound  for the imaginary part of $w(z)$
\begin{align}\label{Eq:imwz-1o2}
 |\Im w(z)|>\frac{\gamma}{2\sqrt{ 3\pi}}.
\end{align}
\end{lem}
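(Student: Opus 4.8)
\emph{Strategy.} Since $w(z)^2=f(z)-f(z_+)$ is explicit, I will reduce the desired lower bound on $|\Im w(z)|$ to an upper bound on $\Re\big(w(z)^2\big)$. Write $w(z)=p+iq$ and set $a:=\Re\big(w(z)^2\big)=p^2-q^2$, $b:=\Im\big(w(z)^2\big)=2pq$. From $p^2+q^2=|w(z)^2|=\sqrt{a^2+b^2}$ together with $|pq|=|b|/2$ one gets
\[
q^2=\frac{\sqrt{a^2+b^2}-a}{2},\qquad |\Im w(z)|=|q|=\frac{|b|}{2|p|}=\frac{|b|}{2|\Re w(z)|},\qquad |\Re w(z)|^2=\frac{\sqrt{a^2+b^2}+a}{2}.
\]
Thus it suffices to prove two things on each of the four segments: that $|b|\geqslant\gamma$, and that $a=\Re\big(w(z)^2\big)$ is bounded above by a fixed modest constant, which in turn bounds $|\Re w(z)|$ and hence bounds $|\Im w(z)|$ from below.

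\emph{The imaginary part $b$.} Using \eqref{Eq:wz2} and \eqref{Eq:fzpm} and reading off the boundary values of the three logarithms in $f$ from the upper half--plane (equivalently, from the values of $\arg f$ recorded in Tab.~\ref{Tab:argfw}), I will show that
\[
\Im\big(w(z)^2\big)=\gamma\quad\text{on }\Gamma_{(o,1)}^{+,r}\cup\Gamma_{(x,-1)}^{+,r},\qquad \Im\big(w(z)^2\big)=-(\pi-\gamma)\quad\text{on }\Gamma_{(1,x)}^{+,l}\cup\Gamma_{(-1,o)}^{+,l}.
\]
In both cases $|b|\geqslant\gamma$, since $\gamma<\tfrac\pi2\leqslant\pi-\gamma$; this gives the first required fact.

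\emph{Bounding $a$ (the main obstacle).} On these four segments $f$ has a real boundary value up to the imaginary shift just found, so $a$ equals $-\log\frac{z^2-1}{2(z-\cos\gamma)}$ on $\Gamma_{(o,1)}^{+,r}\cup\Gamma_{(x,-1)}^{+,r}$ and $-\log\frac{z^2-1}{2(\cos\gamma-z)}$ on $\Gamma_{(1,x)}^{+,l}\cup\Gamma_{(-1,o)}^{+,l}$. A direct differentiation shows that $\frac{z^2-1}{2(z-\cos\gamma)}$ and $\frac{z^2-1}{2(\cos\gamma-z)}$ are strictly monotone in $z$ (consistent with Remark~\ref{rem:log}), so on each segment $a$ is monotone and attains its maximum at the interior splitting point supplied by Remark~\ref{rem:Gapsli}, namely $z=1+\sqrt{1-\cos\gamma}$, $z=1-\sqrt3$, $z=\cos\frac\gamma2$, or $z=-1-\sqrt{\cos\gamma+1}$; the neighborhoods of the logarithmic singularities of $f$, where $a\to+\infty$, are precisely the complementary pieces treated in Lemma~\ref{Lem:imw1}. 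Substituting each endpoint and simplifying with $t=\sqrt{1-\cos\gamma}$, $c=\cos\frac\gamma2$, $s=\sqrt{\cos\gamma+1}$ reduces every ratio to an elementary expression in one variable on a bounded range; the largest value, occurring at $z=1-\sqrt3$, is $\log\frac{2\sqrt3}{2\sqrt3-3}<\log 8$, so that $a\leqslant\log 8$ uniformly for $0<\gamma<\tfrac\pi2$.

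\emph{Conclusion.} Since $|b|\leqslant\pi$ and $a\leqslant\log 8$, and $t\mapsto\tfrac12(\sqrt{t^2+\pi^2}+t)$ is increasing, we obtain
\[
|\Re w(z)|^2=\frac{\sqrt{a^2+b^2}+a}{2}\leqslant \frac{\sqrt{(\log 8)^2+\pi^2}+\log 8}{2}=:K<3\pi .
\]
Because $b\neq0$ we have $\Re w(z)\neq0$, so combining with $|b|\geqslant\gamma$ gives
\[
|\Im w(z)|=\frac{|b|}{2|\Re w(z)|}\geqslant\frac{\gamma}{2\sqrt K}>\frac{\gamma}{2\sqrt{3\pi}},
\]
which is \eqref{Eq:imwz-1o2}. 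The only delicate points are the branch bookkeeping that produces $b$ and the verification that the monotonicity places the maximum of $a$ at the stated endpoints; both are routine given the decomposition in Remark~\ref{rem:Gapsli}.
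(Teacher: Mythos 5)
Your proposal is correct and follows essentially the same route as the paper's proof: both rest on the explicit boundary values $\Im\big(w(z)^2\big)=\gamma$ on $\Gamma_{(o,1)}^{+,r}\cup\Gamma_{(x,-1)}^{+,r}$ and $-(\pi-\gamma)$ on $\Gamma_{(1,x)}^{+,l}\cup\Gamma_{(-1,o)}^{+,l}$, the monotonicity of the logarithm from Remark~\ref{rem:log} to reduce everything to the endpoint evaluations at $1+\sqrt{1-\cos\gamma}$, $1-\sqrt3$, $\cos\frac\gamma2$, $-1-\sqrt{\cos\gamma+1}$, and the identity $(\Im w)^2=\tfrac12\big(\sqrt{a^2+b^2}-a\big)$. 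The only cosmetic difference is that you package the last step as an upper bound $|\Re w|^2\leqslant K<3\pi$ via the single uniform constant $a\leqslant\log 8$ and the relation $|\Im w|=|b|/(2|\Re w|)$, whereas the paper lower-bounds $\nu^2$ directly with the case-by-case constants $-3$ and $-2$ and a tailored algebraic inequality; the two computations are equivalent.
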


For any $u\in (-\frac{ \sqrt{-\log \varepsilon}}2,\frac{ \sqrt{-\log \varepsilon}}2)$, in order to estimate the integrand $\frac{g(z)}{w(z)^{2p}(w(z)-u)}$ in \eqref{Eq:Cpu2}, we also need the following lemma. 

\begin{lem}\label{Le:g2}
Let $\Gamma_{-1}^+,\, \Gamma_{1}^+,\, \Gamma_{o}^+, \,\Gamma_{x}^+$ be the four arcs, as defined in Notation \ref{No:Gam0+}. Under the parameter constraints $-1<\beta\leqslant\alpha\leqslant 0$ and $\ell<0$, the following inequalities hold:
\begin{enumerate}
    \item 
 For $z\in  \Gamma_{-1}^+$, we have
\begin{equation}\label{Eq:gzGa-1-x}
    |z-\cos\gamma|\geqslant 1+\cos\gamma \qquad
\text{and} \qquad
    |z-1|^{\alpha}<1.
\end{equation}

\item For $z\in \Gamma_{1}^+$, 
we have
\begin{equation}\label{Eq:gzGa1-x}
    |z-\cos\gamma|\geqslant 1-\cos\gamma \qquad
\text{and} \qquad
    |z+1|^{\beta}<1.
\end{equation}

\item  For $z\in \Gamma_{o}^+$, we have
\begin{equation}\label{Eq:Gaoz-x1-1}
    |z-\cos\gamma|=4e,
\qquad
    |z-1|^{\alpha}<1
\qquad \text{and} \qquad 
    |z+1|^{\beta}<1.
\end{equation}

\item For $z\in \Gamma_{x}^+$, we have
\begin{equation}\label{Eq:Gaxz-x1-1}
    |z-\cos\gamma|=\frac{\sin^2\gamma}{8},
\qquad
    |z-1|^{\alpha}\leqslant 12 \gamma^{2\alpha}
\qquad \text{and} \qquad 
    |z+1|^{\beta}\leqslant 2.
\end{equation}

   \end{enumerate}
\end{lem}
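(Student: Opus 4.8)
The plan is to prove Lemma~\ref{Le:g2} by directly evaluating or bounding each of the three factors $|z-\cos\gamma|$, $|z-1|^\alpha$, and $|z+1|^\beta$ on each of the four arcs, using only the explicit parametrizations of $\Gamma_{-1}^+,\Gamma_1^+,\Gamma_o^+,\Gamma_x^+$ from Notation~\ref{No:Gam0+} together with the constraint $-1<\beta\leqslant\alpha\leqslant0$. The governing principle throughout is that a base $r\in(0,1]$ raised to a nonpositive exponent satisfies $r^\alpha\leqslant 1$ whenever $r\geqslant 1$, and more generally $r^\alpha$ is decreasing in $r$ since $\alpha\leqslant 0$; thus each bound on a power reduces to a bound on the base. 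The parameter $\varepsilon$ is treated as a fixed small quantity obeying the constraints in \eqref{Eq:var2}, in particular $\varepsilon<\sqrt{1-\cos\gamma}$ and $\varepsilon<e^{-32}$, which I will invoke whenever I need the arcs to stay comfortably away from the branch points $\pm1$ and $\cos\gamma$.

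I would handle the four arcs in order of increasing difficulty. For $\Gamma_o^+$ (item~3) the point $z=\cos\gamma+4e^{1+i\theta}$ sits on a circle of radius $4e$ centered at $\cos\gamma$, so $|z-\cos\gamma|=4e$ is immediate; since this radius dwarfs the distance from $\cos\gamma$ to either $\pm1$, both $|z-1|$ and $|z+1|$ exceed $1$, giving $|z-1|^\alpha<1$ and $|z+1|^\beta<1$ by nonpositivity of the exponents. For $\Gamma_{-1}^+$ (item~1), writing $z=-1+\varepsilon e^{i\theta}$, the distance $|z-\cos\gamma|$ is minimized at $\theta=0$ where it equals $1+\cos\gamma-\varepsilon$; here I must use $\varepsilon$ small enough (guaranteed by \eqref{Eq:var2}) to absorb the $\varepsilon$ and conclude $|z-\cos\gamma|\geqslant 1+\cos\gamma$, and for $|z-1|^\alpha<1$ I note $|z-1|\geqslant 2-\varepsilon>1$. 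The arc $\Gamma_1^+$ (item~2) is entirely symmetric: $|z-\cos\gamma|\geqslant 1-\cos\gamma$ (again after absorbing $\varepsilon$ using $\varepsilon<\sqrt{1-\cos\gamma}$) and $|z+1|\geqslant 2-\varepsilon>1$ so $|z+1|^\beta<1$.

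The arc $\Gamma_x^+$ (item~4) is where the real work lies and I expect it to be the main obstacle, because here $z=\cos\gamma+\tfrac{\sin^2\gamma}{8}e^{i\theta}$ sits on a small circle whose radius shrinks like $\gamma^2$ as $\gamma\to0$, so the point is genuinely close to the branch point $\cos\gamma$ and the bounds on $|z\mp1|^{\alpha},|z+1|^\beta$ cannot be treated as crudely. The equality $|z-\cos\gamma|=\tfrac{\sin^2\gamma}{8}$ is again immediate. For $|z-1|^\alpha\leqslant 12\gamma^{2\alpha}$ I would estimate $|z-1|$ from below by $1-\cos\gamma-\tfrac{\sin^2\gamma}{8}$; using the half-angle identity $1-\cos\gamma=2\sin^2\tfrac\gamma2$ and the inequality \eqref{Eq:12singa} to compare $\sin^2\tfrac\gamma2$ with $\tfrac14\sin^2\gamma$, this lower bound is a constant multiple of $\gamma^2$, and raising to the nonpositive power $\alpha$ then converts the constant factor into the stated numerical constant $12$ and yields the $\gamma^{2\alpha}$ dependence. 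For $|z+1|^\beta\leqslant2$ I bound $|z+1|$ from below by $1+\cos\gamma-\tfrac{\sin^2\gamma}{8}$, show this stays bounded away from $0$ (indeed exceeds $\tfrac12$ uniformly on $0<\gamma<\tfrac\pi2$), and raise to the power $\beta>-1$ to obtain the constant $2$. The delicate bookkeeping is making the numerical constants ($12$ and $2$) come out correctly while tracking the worst case over $\theta$ and over $\gamma\in(0,\tfrac\pi2)$; this calibration of constants, rather than any conceptual difficulty, is the crux of the argument.
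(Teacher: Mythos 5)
Your proposal is correct and follows essentially the same route as the paper: on each arc you read off $|z-\cos\gamma|$ from the parametrization, bound $|z\mp1|$ from below, and use that $r\mapsto r^{\alpha}$ (resp. $r^{\beta}$) is decreasing for nonpositive exponents; on $\Gamma_x^+$ your chain $1-\cos\gamma-\tfrac{\sin^2\gamma}{8}\geqslant\tfrac38\sin^2\gamma\geqslant\tfrac{3\gamma^2}{32}$ delivers the constant $12$ just as the paper's $\geqslant\tfrac{\sin^2\gamma}{3}\geqslant\tfrac{\gamma^2}{12}$ does, and $1+\cos\gamma-\tfrac{\sin^2\gamma}{8}\geqslant\tfrac12$ with $\beta>-1$ gives the bound $2$. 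The only caveat, shared with the paper's own proof, is that on $\Gamma_{\pm1}^+$ the stated bounds $|z-\cos\gamma|\geqslant1\pm\cos\gamma$ really hold only in the limit $\varepsilon\to0^+$ (for fixed $\varepsilon$ one gets $1\pm\cos\gamma-\varepsilon$), which is how the lemma is actually invoked.
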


Now we are in a position to give a lemma to estimate $C_{p,+}(u)$. 

For this purpose we first rewrite the integral \eqref{Eq:Cpu2} as follows
\begin{equation}\label{Eq:Cp2}
C_{p,+}(u)
=\int_{\Gamma^*}\frac{g(z)}{w(z)^{2p}(w(z)-u)}dz,
\end{equation}
where $\Gamma^*=\Gamma_{-1}^++\Gamma_{1}^++\Gamma_{o}^++\Gamma_x^++\Gamma_{(o,1)}^{+,r}+\Gamma_{(x,-1)}^{+,r}+\Gamma_{(1,x)}^{+,l}+\Gamma_{(-1,o)}^{+,l}+\Gamma_{(o,1)}^{+,l}+\Gamma_{(1,x)}^{+,r}+\Gamma_{(-1,o)}^{+,r}+\Gamma_{(x,-1)}^{+,l}$
for $u\in (-\frac{ \sqrt{-\log \varepsilon}}2, \frac{ \sqrt{-\log \varepsilon}}2)$;
and we present the following lemma  to estimate the error bound for  $C_{p,+}(u)$.

\begin{lem}\label{Lem:Cp+u}
Let $C_{p,+}(u)$ be the integral given in equation \eqref{Eq:Cp2}, and $\alpha$, $\beta$, $\gamma$ be the parameters as before. The modulus of $C_{p,+}(u)$ satisfies the following inequality for $u\in (-\frac{ \sqrt{-\log \varepsilon}}2, \frac{ \sqrt{-\log \varepsilon}}2)$ as $\varepsilon \to 0$
\begin{align*}
   \notag |C_{p,+}(u)|\leqslant &\pi +24 \pi \gamma^{2\alpha}+ \frac{128\sqrt{3\pi}(4e+1)^{\alpha+1}}{(\beta+1)\gamma^{p+3}}+\frac{128\sqrt{3\pi}}{\gamma^{p+3}}\mathbf{B}(\alpha+1,\beta+1) + \frac{9280 \gamma^{\alpha} + 1184\pi(\pi-\gamma)\gamma^{\beta}}{(\beta+1)^{\frac{3}{2}}\gamma^{p+1}}.
\end{align*}

\end{lem}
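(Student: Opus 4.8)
The plan is to bound $|C_{p,+}(u)|$ by applying the triangle inequality to the decomposition of $\Gamma^*$ in \eqref{Eq:Cp2}, writing
\[
|C_{p,+}(u)|\leqslant \sum_{\Gamma'}\int_{\Gamma'}\frac{|g(z)|}{|w(z)|^{2p}\,|w(z)-u|}\,|dz|,
\]
where the sum runs over the twelve pieces (four arcs and eight line-segment halves) listed after \eqref{Eq:Cp2}, and $|g(z)|=|1-z|^{\alpha}|z+1|^{\beta}/|z-\cos\gamma|$ is read off from \eqref{Eq:gz2}. Since $u$ is real, along every line segment one has $|w(z)-u|\geqslant|\Im w(z)|$, and Lemma~\ref{Lem:imw1} (inequality \eqref{Eq:wz2o-1}) gives $|w(z)|^{2p}\geqslant\gamma^{p}$; thus each segment integral is controlled once the appropriate lower bound for $|\Im w(z)|$ is inserted, while each arc integral is controlled directly through Lemmas~\ref{Le:wcir} and~\ref{Le:g2}.

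I would first dispatch the four arcs. On the large arc $\Gamma_o^+$, inequalities \eqref{Eq:rew2}, \eqref{Eq:|w-u|o} and \eqref{Eq:Gaoz-x1-1} give $|w|>1$, $|w-u|>1$ and $|g(z)|\leqslant 1/(4e)$, so that the integral over $\Gamma_o^+$ (of length $4\pi e$) is at most $\pi$. On the small arc $\Gamma_x^+$ the same $w$-bounds together with \eqref{Eq:Gaxz-x1-1} yield $|g(z)|\leqslant 192\gamma^{2\alpha}/\sin^2\gamma$, and multiplying by the length $\pi\sin^2\gamma/8$ produces the term $24\pi\gamma^{2\alpha}$. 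On the two shrinking arcs $\Gamma_{\pm1}^+$, the bounds \eqref{Eq:L|w|}, \eqref{Eq:Gapm1w-u}, \eqref{Eq:gzGa-1-x} and \eqref{Eq:gzGa1-x} show the integrand is $O(\varepsilon^{\alpha})$, resp.\ $O(\varepsilon^{\beta})$, over an arc of length $O(\varepsilon)$, so these contributions are $O(\varepsilon^{\alpha+1})$ and $O(\varepsilon^{\beta+1})$ and vanish as $\varepsilon\to0$ because $\alpha+1>0$ and $\beta+1>0$.

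The eight line-segment halves account for the remaining four terms. On the four pieces $\Gamma_{(o,1)}^{+,r},\Gamma_{(x,-1)}^{+,r},\Gamma_{(-1,o)}^{+,l},\Gamma_{(1,x)}^{+,l}$, the constant bound $|\Im w(z)|>\gamma/(2\sqrt{3\pi})$ of Lemma~\ref{Le:imw} turns the integrand into a multiple of $|g(z)|/\gamma^{p+1}$, and integrating the explicit weight $|g(z)|$ produces the term $(9280\gamma^{\alpha}+1184\pi(\pi-\gamma)\gamma^{\beta})/((\beta+1)^{3/2}\gamma^{p+1})$. On the pieces adjacent to $z=1$ (namely $\Gamma_{(o,1)}^{+,l}\cup\Gamma_{(1,x)}^{+,r}$) I would insert the variable bound \eqref{Eq:imwz1o1}, and on those adjacent to $z=-1$ (namely $\Gamma_{(-1,o)}^{+,r}\cup\Gamma_{(x,-1)}^{+,l}$) the bound \eqref{Eq:imwz2-1}; in each case the singular factors $|z\mp1|$ and $|z-\cos\gamma|$ appearing in $|g(z)|$ partially cancel the corresponding factors in the lower bound for $|\Im w(z)|$, leaving an integrand whose singularity is \emph{integrable} (precisely because $\alpha+1>0$, $\beta+1>0$) and whose integral, after rescaling, yields the Beta-function term $128\sqrt{3\pi}\,\mathbf{B}(\alpha+1,\beta+1)/\gamma^{p+3}$ and the boundary term $128\sqrt{3\pi}(4e+1)^{\alpha+1}/((\beta+1)\gamma^{p+3})$. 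Summing the twelve contributions gives the asserted inequality.

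The main obstacle is this last step. On the $\alpha$- and $\beta$-type segments the cancellation of the powers of $|z\mp1|$ and $|z-\cos\gamma|$ must be performed exactly so that the surviving exponents exceed $-1$, the $\varepsilon\to0$ limits of the resulting improper integrals must be shown to exist, and the explicit constants together with the powers of $\gamma$—in particular the passage from $\gamma^{p+1}$ to $\gamma^{p+3}$, which comes from the extra factor $\gamma^{-1}$ in \eqref{Eq:imwz1o1}--\eqref{Eq:imwz2-1} combined with the $\gamma$-dependent length scales near $\cos\gamma$—must be tracked with care. By comparison, the arc estimates and the constant-$|\Im w|$ segments are routine.
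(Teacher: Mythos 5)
Your overall architecture coincides with the paper's: the same twelve-piece splitting of $\Gamma^*$, the vanishing of the $\Gamma_{\pm1}^{+}$ contributions as $\varepsilon\to0$ via \eqref{Eq:L|w|}, \eqref{Eq:Gapm1w-u}, \eqref{Eq:gzGa-1-x}, \eqref{Eq:gzGa1-x}, the bounds $\pi$ and $24\pi\gamma^{2\alpha}$ for $\Gamma_o^{+}$ and $\Gamma_x^{+}$, the constant bound of Lemma \ref{Le:imw} on $\Gamma_{(o,1)}^{+,r}\cup\Gamma_{(x,-1)}^{+,r}\cup\Gamma_{(1,x)}^{+,l}\cup\Gamma_{(-1,o)}^{+,l}$, and the variable bounds \eqref{Eq:imwz1o1}, \eqref{Eq:imwz2-1} on the remaining four half-segments. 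The arc estimates are correct as stated. However, you have swapped which group of segments produces which terms of the claimed bound, and the swap is not cosmetic: executed as written, your last step cannot yield the stated constants.

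Concretely, the terms $128\sqrt{3\pi}\,\mathbf{B}(\alpha+1,\beta+1)/\gamma^{p+3}$ and $128\sqrt{3\pi}(4e+1)^{\alpha+1}/((\beta+1)\gamma^{p+3})$ come from the \emph{constant}-$|\Im w|$ group, not the variable one. There one bounds $|w(z)|^{2p}|w(z)-u|\geqslant\gamma^{p}\cdot\gamma/(2\sqrt{3\pi})$ and $|z-\cos\gamma|\geqslant\sin^2\gamma/8\geqslant\gamma^{2}/32$, so the weight $|1-z|^{\alpha}|z+1|^{\beta}$ is integrated with its \emph{full} exponents; the pieces $\Gamma_{(x,-1)}^{+,r}+\Gamma_{(1,x)}^{+,l}\subset[-1,1]$ then give $\int_{-1}^{1}(1-z)^{\alpha}(1+z)^{\beta}\,dz=2^{\alpha+\beta+1}\mathbf{B}(\alpha+1,\beta+1)$, while $\Gamma_{(o,1)}^{+,r}$, which reaches out to $\cos\gamma+4e$, supplies the $(4e+1)^{\alpha+1}$ factor. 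On the variable-bound group the cancellation you describe leaves exponents $(\alpha-1)/2$ and $(\beta-1)/2$, not $\alpha$ and $\beta$, so those pieces could at best produce $\mathbf{B}\big(\frac{\alpha+1}{2},\frac{\beta+1}{2}\big)$, never $\mathbf{B}(\alpha+1,\beta+1)$; moreover $\Gamma_{(o,1)}^{+,l}$ stops at $1+\sqrt{1-\cos\gamma}\leqslant1+\sqrt{2}$, so no factor $(4e+1)^{\alpha+1}$ can arise there. What the variable-bound group actually yields is the final term $(9280\gamma^{\alpha}+1184\pi(\pi-\gamma)\gamma^{\beta})/((\beta+1)^{3/2}\gamma^{p+1})$: the $(\beta+1)^{3/2}$ is the product of the $\sqrt{\beta+1}$ already present in \eqref{Eq:imwz2-1} with the $(\beta+1)^{-1}$ from integrating $|z+1|^{(\beta-1)/2}$, and a constant lower bound on $|\Im w|$ cannot manufacture that $\sqrt{\beta+1}$. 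Likewise the passage to $\gamma^{-(p+3)}$ belongs to the constant-bound group (via $|z-\cos\gamma|\geqslant\gamma^{2}/32$), not to \eqref{Eq:imwz1o1}--\eqref{Eq:imwz2-1}. Once these two attributions are exchanged, your plan becomes the paper's proof.
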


\begin{proof}

We split this proof into four steps. 

Step 1. First, we claim that 

\begin{align}\label{Eq:Ga-1}
&\lim_{\varepsilon\to0^+}\Big|\int_{\Gamma_{-1}^+}\frac{g(z)}{|w(z)|^{2p}|w(z)-u|}\,dz\Big|=0
\end{align}
and
\begin{align}\label{Eq:Ga1}
&\lim_{\varepsilon\to0^+}\Big|\int_{\Gamma_{1}^+}\frac{g(z)}{|w(z)|^{2p}|w(z)-u|}\,dz\Big|=0.
\end{align}
When $0<\varepsilon<\min\{\frac{1}{144},1-\cos\gamma\}$, it follows from inequalities \eqref{Eq:L|w|} and  \eqref{Eq:Gapm1w-u} in Lemma \ref{Le:wcir} that 
\begin{align*}
\lim_{\varepsilon\to0^+}\Big|\int_{\Gamma_{-1}^+}\frac{g(z)}{|w(z)|^{2p}|w(z)-u|}\,dz\Big|
\leqslant\lim_{\varepsilon\to0^+}\int_{\Gamma_{-1}^+}\frac{4|z-1|^{\alpha} |z+1|^\beta}{|z-\cos\gamma|}\,dz.
\end{align*}
Obviously, inequalities \eqref{Eq:gzGa-1-x} yields that
\begin{align*}
\lim_{\varepsilon\to0^+}\Big|\int_{\Gamma_{-1}^+}\frac{g(z)}{|w(z)|^{2p}|w(z)-u|}\,dz\Big|
\leqslant&  \frac{4\pi \varepsilon^{\beta+1}}{1+\cos\gamma}.
\end{align*}
Recalling $-1< \beta \leqslant 0$, equation \eqref{Eq:Ga-1} holds.

Applying inequalities \eqref{Eq:L|w|} and \eqref{Eq:Gapm1w-u} to $\lim_{\varepsilon\to0^+}\Big|\int_{\Gamma_{1}^+}\frac{g(z)}{|w(z)|^{2p}|w(z)-u|}\,dz\Big|$, the same discussion gives equation \eqref{Eq:Ga1}.

%%%%%%%%%%%%%
Step 2. For all $u\in (-\frac{ \sqrt{-\log \varepsilon}}2, \frac{ \sqrt{-\log \varepsilon}}2)$ with $\varepsilon \to 0$,  let the error of $C_{p,+}(u)$ in $\Gamma_{o}^+\cup \Gamma_{x}^+$ be denoted by 
\begin{equation*}
C_{p,+}^{o,x}(u)
=\int_{\Gamma_{o}^++ \Gamma_{x}^+}\frac{g(z)}{w(z)^{2p}(w(z)-u)}dz.
\end{equation*}
By inequalities \eqref{Eq:rew2} and \eqref{Eq:|w-u|o}, we have
\begin{align*}
|C_{p,+}^{o,x}(u) &|\leqslant \int_{\Gamma_{o}^+ +\Gamma_{x}^+}|g(z)|\,dz.
\end{align*}
 It follows from the expression of $g(z)$ in equation \eqref{Eq:gz2} that
\begin{align*}
|C_{p,+}^{o,x}(u) &|\leqslant \int_{\Gamma_{o}^+ +\Gamma_{x}^+}\frac{|z-1|^{\alpha}|z+1|^{\beta}}{|z-\cos\gamma|}dz.
\end{align*}
Given that $-1<\beta\leqslant\alpha \leqslant 0$,  inequalities \eqref{Eq:Gaoz-x1-1} and  \eqref{Eq:Gaxz-x1-1} yields  
\begin{align}\label{Eq:|Cpo|}
|C_{p,+}^{o,x}(u)| 
\leqslant \pi+24 \pi \gamma^{2\alpha}.
\end{align}

Step 3. For $u\in \R$, let the error of $C_{p,+}(u)$ in $\Gamma_{(o,1)}^{+,r}\cup \Gamma_{(x,-1)}^{+,r}\cup \Gamma_{(1,x)}^{+,l}\cup \Gamma_{(-1,o)}^{+,l}$ be denoted by
\begin{equation*}
C_{p,+}^{1}(u)
=\int_{\Gamma_{(o,1)}^{+,r}+\Gamma_{(x,-1)}^{+,r}+\Gamma_{(1,x)}^{+,l}+\Gamma_{(-1,o)}^{+,l}}\frac{g(z)}{w(z)^{2p}(w(z)-u)}dz.
\end{equation*}
Combined with \eqref{Eq:wz2o-1} and \eqref{Eq:imwz-1o2}, we have from $|w(z)-u|\geqslant |\Im w(z)|$ that
\begin{align}\label{Eq:Cp+1o}
\Big|C_{p,+}^{1}(u)\Big|\leqslant \frac{2\sqrt{3\pi}}{\gamma^{p+1}}\int_{\Gamma_{(o,1)}^{+,r}+\Gamma_{(x,-1)}^{+,r}+\Gamma_{(1,x)}^{+,l}+\Gamma_{(-1,o)}^{+,l}}|g(z)|\,dz. 
\end{align}
% \begin{align*}
% \Big|C_{p,+}^{1}(u)\Big|\leqslant\frac{2\sqrt{3\pi}}{\gamma^{p+1}}\int_{\Gamma_{(o,1)}^{+,r}+\Gamma_{(1,x)}^{+,l}}\frac{|z-1|^{\alpha} |z+1|^\beta}{|z-\cos\gamma|}\,dz +\frac{2\sqrt{3\pi}}{\gamma^{p+1}}\int_{\Gamma_{(x,-1)}^{+,r}+\Gamma_{(-1,o)}^{+,l}}\frac{|z-1|^{\alpha} |z+1|^\beta}{|z-\cos\gamma|}\,dz .
% \end{align*}
 % By inequalities  \eqref{Eq:Gao1z-x1-1} with $\ell=\beta$ and inequalities \eqref{Eq:Gax-1z-x1-1} with $\ell=\alpha$, we obtain 
Recalling that $\Gamma_{(o,1)}^{+,r}=(1+\sqrt{1-\cos\gamma}, \cos\gamma+4e]^+$, $ \Gamma_{(1,x)}^{+,l}=[\cos\gamma+\frac{\sin^2\gamma}{8},\cos\frac{\gamma}{2} ]^+$, $\Gamma_{(x,-1)}^{+,r}=[1-\sqrt 3, \cos\gamma-\frac{\sin^2\gamma}{8}]^+$ and  $\Gamma_{(-1,o)}^{+,l}= [-4e+\cos\gamma, -1-\sqrt{\cos\gamma+1}]^+$, 
we obtain 
\begin{equation*}
    |z-\cos\gamma|\geqslant \frac{\sin^2\gamma}{8}\geqslant\frac{\gamma^2}{32},
\end{equation*}
for $z\in \Gamma_{(o,1)}^{+,r}\cup \Gamma_{(x,-1)}^{+,r}\cup \Gamma_{(1,x)}^{+,l}\cup \Gamma_{(-1,o)}^{+,l}$. Here we have used the inequality
\begin{align}\label{Eq:singa}
\sin\gamma\geqslant \frac{\gamma}{2},
\end{align}
for $\gamma\in(0,\frac{\pi}{2})$.
Therefore, combined with the expression of $g(z)=\frac{(1-z)^{\alpha} (z+1)^\beta}{z-\cos\gamma}$ (see equation \eqref{Eq:gz2}),  we can rewrite inequality \eqref{Eq:Cp+1o} as 
\begin{align*}
\Big|C_{p,+}^{1}(u)\Big|\leqslant \frac{64\sqrt{3\pi}}{\gamma^{p+3}}\int_{\Gamma_{(o,1)}^{+,r}+\Gamma_{(x,-1)}^{+,r}+\Gamma_{(1,x)}^{+,l}+\Gamma_{(-1,o)}^{+,l}}|1-z|^{\alpha} |z+1|^\beta\,dz. 
\end{align*}
Using the fact that 
\(
|z+1|> 1  
\)
for $\Gamma_{(o,1)}^{+,r}$ and 
\(
|1-z|>1
\) for $\Gamma_{(-1,o)}^{+,l}$, we have
\begin{align*}
 \Big|C_{p,+}^{1}(u)\Big|\leqslant&\frac{64\sqrt{3\pi}}{(\alpha+1)\gamma^{p+3}}(4e+\cos\gamma-1)^{\alpha+1}+\frac{64\sqrt{3\pi}}{(\beta+1)\gamma^{p+3}}(4e-\cos\gamma-1)^{\beta+1}\\
 &+\frac{64\sqrt{3\pi}}{\gamma^{p+3}}\int_{\Gamma_{(x,-1)}^{+,r}+\Gamma_{(1,x)}^{+,l}}(1-z)^{\alpha} (z+1)^\beta\,dz\\
 \leqslant&\frac{128\sqrt{3\pi} (4e+1)^{\alpha+1}}{(\beta+1)\gamma^{p+3}}+\frac{64\sqrt{3\pi}}{\gamma^{p+3}}\int_{-1}^1 (1-z)^{\alpha} (z+1)^\beta\,dz.
\end{align*}
Applying the Beta function
 \begin{align*}
    \int_{-1}^1 (1-z)^{\alpha} (z+1)^\beta\,dz =2^{\alpha+\beta+1}\int_{-1}^1 z^{\alpha} (1-z)^{\beta}\,dz =2^{\alpha+\beta+1}\mathbf{B}(\alpha+1,\beta+1),
\end{align*}
we have
\begin{align}\label{Eq:Cp+1c}
  \Big|C_{p,+}^{1}(u)\Big|
 \leqslant&\frac{128\sqrt{3\pi}(4e+1)^{\alpha+1}}{(\beta+1)\gamma^{p+3}}+\frac{128\sqrt{3\pi}}{\gamma^{p+3}}\mathbf{B}(\alpha+1,\beta+1).
\end{align}

Step 4.   For $u\in \R$, let the error of $C_{p,+}(u)$ in $\Gamma_{(o,1)}^{+,l}\cup \Gamma_{(1,x)}^{+,r}\cup\Gamma_{(-1,o)}^{+,r}\cup\Gamma_{(x,-1)}^{+,l}$ be denoted by
\begin{equation*}
C_{p,+}^{2}(u)
=\int_{\Gamma_{(o,1)}^{+,l}+\Gamma_{(1,x)}^{+,r}+\Gamma_{(-1,o)}^{+,r}+\Gamma_{(x,-1)}^{+,l}}\frac{g(z)}{w(z)^{2p}(w(z)-u)}dz.
\end{equation*}
The inequality \eqref{Eq:wz2o-1} gives
\begin{equation}\label{Eq:Cp+2o}
|C_{p,+}^{2}(u)|
\leqslant \frac{1}{\gamma^p}\left(\int_{\Gamma_{(o,1)}^{+,l}+\Gamma_{(1,x)}^{+,r}}\frac{|g(z)|}{|w(z)-u|}dz+\int_{\Gamma_{(-1,o)}^{+,r}+\Gamma_{(x,-1)}^{+,l}}\frac{|g(z)|}{|w(z)-u|}dz\right).
\end{equation}
Given that $|g(z)|=\frac{|z-1|^{\alpha} |z+1|^\beta}{|z-\cos\gamma|}$, the inequality \eqref{Eq:imwz1o1} yields 
\begin{align}\label{Eq:gzimw1}
\frac{|g(z)|}{|w(z)-u|}\leqslant\frac{580\cdot 2^{\frac{\alpha+1}{2}}}{\sqrt{\alpha+1}\gamma} |z+1|^{\beta-\frac{\alpha+1}{2}}|z-\cos\gamma|^{\frac{\alpha-1}{2}}|z-1|^{\frac{\alpha-1}2},
\end{align}
for $z\in \Gamma_{(o,1)}^{+,l}\cup\Gamma_{(1,x)}^{+,r}$; and the inequality \eqref{Eq:imwz2-1} yields 
\begin{align}\label{Eq:gzimw2}
\frac{|g(z)|}{|w(z)-u|}\leqslant\frac{74\cdot 2^{\frac{\beta+1}{2}}  \pi(\pi-\gamma)} {\sqrt{\beta+1}\gamma} |z-1|^{\alpha-\frac{\beta+1}{2}}|z-\cos\gamma|^{\frac{\beta-1}{2}}|z+1|^{\frac{\beta-1}2},
\end{align}
for $z\in \Gamma_{(-1,o)}^{+,r}\cup\Gamma_{(x,-1)}^{+,l}$.
Recalling that $\Gamma_{(o,1)}^{+,l}=[1+\varepsilon,  1+\sqrt{1-\cos\gamma}]^+$, $ \Gamma_{(1,x)}^{+,r}=[\cos\frac{\gamma}{2}, 1-\varepsilon ]^+$,  $\Gamma_{(-1,o)}^{+,r}=(-1-\sqrt{\cos\gamma+1}, -1-\varepsilon]^+$ and $\Gamma_{(x,-1)}^{+,l}=[-1+\varepsilon, 1-\sqrt 3]^+$, we have 
\begin{align*}
    |z+1|\geqslant 1 \qquad \text{and} \qquad |z-\cos\gamma|\geqslant \cos\frac{\gamma}{2}-\cos\gamma \geqslant \frac{\gamma^2}{4},
\end{align*}
for $\Gamma_{(o,1)}^{+,l}\cup\Gamma_{(1,x)}^{+,r}$;  similarly, we have
\begin{align*}
    |z-1|\geqslant 1 \qquad \text{and} \qquad |z-\cos\gamma|\geqslant\sqrt{3}-1\geqslant \cos\frac{\gamma}{2}-\cos\gamma \geqslant \frac{\gamma^2}{4},
\end{align*}
for $z\in \Gamma_{(-1,o)}^{+,r}\cup\Gamma_{(x,-1)}^{+,l}$ and $\gamma\in (0,\frac{\pi}{2}]$.
Therefore, for $-1<\beta\leqslant\alpha \leqslant 0$, inequalities \eqref{Eq:Cp+2o}-\eqref{Eq:gzimw2} yield that  
\begin{align*}
|C_{p,+}^{2}(u)|
\leqslant&\frac{580\cdot 2^{\frac{\alpha+1}{2}} \cdot 2^{1-\alpha}}{\sqrt{\alpha+1}\gamma^{p+1-\alpha}}\int_{\Gamma_{(o,1)}^{+,l}+\Gamma_{(1,x)}^{+,r}}|z-1|^{\frac{\alpha-1}2}\,dz+\frac{74\cdot 2^{\frac{\beta+1}{2}}  \pi(\pi-\gamma)\cdot 2^{1-\beta}} {\sqrt{\beta+1}\gamma^{p+1-\beta}}\int_{\Gamma_{(-1,o)}^{+,r}+\Gamma_{(x,-1)}^{+,l}}|z+1|^{\frac{\beta-1}2}\,dz.
\end{align*}
By a simple calculation, 
 this error term can be represented by
\begin{align}\label{Eq:Cp+2c}
\notag |C_{p,+}^{2}(u)|
\leqslant& \frac{4640 (1-\cos\gamma)^{\frac{\alpha+1}4}}{(\alpha+1)^{\frac{3}{2}}\gamma^{p+1-\alpha}}+\frac{4640 (1-\cos\frac{\gamma}{2})^{\frac{\alpha+1}2}}{(\alpha+1)^{\frac{3}{2}}\gamma^{p+1-\alpha}}\\
&+\frac{592 \pi(\pi-\gamma)(2-\sqrt{3})^{\frac{\beta+1}{2}}} {(\beta+1)^{\frac{3}{2}}\gamma^{p+1-\beta}}+\frac{148\pi(\pi-\gamma)\cdot 2^{\frac{3-\beta}{2}}(1+\cos\gamma)^{\frac{\beta+1}4}} {(\beta+1)^{\frac{3}{2}}\gamma^{p+1-\beta}}\\\notag
\leqslant& \frac{9280 \gamma^{\alpha} + 1184\pi(\pi-\gamma)\gamma^{\beta}}{(\beta+1)^{\frac{3}{2}}\gamma^{p+1}}.
\end{align}

% Applying the inequalities \eqref{Eq:Gao1z-x1-1} with $\ell=\beta-\frac{\alpha+1}{2}$ and the inequalities \eqref{Eq:Gax-1z-x1-1} with $\ell=\alpha-\frac{\beta+1}{2}$ gives
% \begin{align*}
% |C_{p,+}^{2}(u)|
% \leqslant &\frac{580\cdot 2^{3-2\alpha}}{\sqrt{\alpha+1}\gamma^{p+2-\alpha}}\int_{\Gamma_{(o,1)}^{+,l}+\Gamma_{(1,x)}^{+,r}}|z-1|^{\frac{\alpha-1}2}\,dz\\
% &+\frac{74\cdot 2^{4-\beta-2\alpha}  \pi(\pi-\gamma)} {\sqrt{\beta+1}\gamma^{p+3-2\alpha}}\int_{\Gamma_{(-1,o)}^{+,r}+\Gamma_{(x,-1)}^{+,l}} |z+1|^{\frac{\beta-1}2}\,dz.
% \end{align*}

% \begin{align}\label{Eq:Cp+2}
% \notag|C_{p,+}^{2}(u)|
% \leqslant &\frac{580\cdot 2^{4-2\alpha}}{(\alpha+1)^{
% \frac{3}{2}
% }\gamma^{p+2-\alpha}}\left( (1-\cos\gamma)^{\frac{\alpha+1}4}+(1-\cos\frac{\gamma}2)^{\frac{\alpha+1}2} \right)\\
% \notag&+\frac{74\cdot 2^{5-\beta-2\alpha}  \pi(\pi-\gamma)} {(\beta+1)^{
% \frac{3}{2}}\gamma^{p+3-2\alpha}}\left((1+\cos\gamma)^{\frac{\beta+1}4}+ (2-\sqrt3)^{\frac{\beta+1}2}  \right)\\
% &\leqslant \frac{ 2^{5-2\alpha} } {\gamma^{p+3-2\alpha}}\left(\frac{580}{(\alpha+1)^{
% \frac{3}{2}}}+\frac{444\pi(\pi-\gamma)}{(\beta+1)^{
% \frac{3}{2}}}\right).
% \end{align}

Therefore, recalling the definitions of $C_{p,+}^{o,x}(u)$,   $C_{p,+}^{1}(u)$ and $C_{p,+}^{2}(u)$, this lemma holds with equations \eqref{Eq:Ga-1} and \eqref{Eq:Ga1}, inequalities \eqref{Eq:|Cpo|},  \eqref{Eq:Cp+1c} and \eqref{Eq:Cp+2c} combined.

\end{proof}

%%%%%%%%%
 \begin{lem}
  Maintain the notations in this section. The following inequality holds.
\begin{equation*}
|\varepsilon_p^+(n,\alpha,\beta)|\leqslant\frac{c_p^+\Gamma(p+\frac12)}{n^p},
\end{equation*}
where
\begin{align*}
c_p^+= & \pi +24 \pi \gamma^{2\alpha}+  \frac{128\sqrt{3\pi}(4e+1)^{\alpha+1}}{(\beta+1)\gamma^{p+3}}+\frac{128\sqrt{3\pi}}{\gamma^{p+3}}\mathbf{B}(\alpha+1,\beta+1) + \frac{9280 \gamma^{\alpha} + 1184\pi(\pi-\gamma)\gamma^{\beta}}{(\beta+1)^{\frac{3}{2}}\gamma^{p+1}}.
\end{align*}
 \end{lem}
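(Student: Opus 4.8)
The plan is to bound $|\varepsilon_p^+(n,\alpha,\beta)|$ directly from its definition \eqref{Eq:epsilon+} by feeding in the uniform estimate $|C_{p,+}(u)|\leqslant c_p^+$ from Lemma \ref{Lem:Cp+u} and then evaluating an elementary Gaussian moment. First I would take absolute values inside \eqref{Eq:epsilon+}. The reversed limits of integration only contribute a sign, which is harmless under $|\cdot|$, and since $\sqrt{-\log\varepsilon}\to+\infty$ as $\varepsilon\to0^+$, the interval of integration fills out all of $\mathbb{R}$ in the limit. Pulling the factor $\sqrt n$ out of the amplitude, the task reduces to controlling the modulus of the remaining amplitude constant and to estimating $\int_{\mathbb{R}}e^{-nu^2}u^{2p}|C_{p,+}(u)|\,du$.

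The first genuine step is to show that the amplitude constant has modulus at most one, in direct analogy with the inequality used in the proof of Lemma \ref{Le:epsilon} for Case 1; that is,
\[
\left|\frac{\sqrt{e^{2i\gamma}-1}}{4\pi^{3/2}e^{i\gamma}(1-e^{i\gamma})^{\alpha}(e^{i\gamma}+1)^{\beta}}\right|\leqslant 1
\]
for $-1<\beta\leqslant\alpha\leqslant 0$ and $\gamma\in(0,\tfrac{\pi}{2})$. Using $|e^{2i\gamma}-1|=2\sin\gamma$, $|1-e^{i\gamma}|=2\sin\frac{\gamma}{2}$, $|e^{i\gamma}+1|=2\cos\frac{\gamma}{2}$ together with $\sin\gamma=2\sin\frac{\gamma}{2}\cos\frac{\gamma}{2}$, the left-hand side equals $\frac{\sqrt{\sin\frac{\gamma}{2}\cos\frac{\gamma}{2}}}{2\pi^{3/2}\,(2\sin\frac{\gamma}{2})^{\alpha}(2\cos\frac{\gamma}{2})^{\beta}}$. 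On the admissible range the numerator never exceeds $1/\sqrt2$, while $\alpha,\beta\in(-1,0]$ force $(2\sin\frac{\gamma}{2})^{\alpha}>1/\sqrt2$ and $(2\cos\frac{\gamma}{2})^{\beta}>1/2$, so the quotient is bounded by $1/\pi^{3/2}<1$. This is where the implicit hypothesis $\beta>-1$ (needed already for the Beta-function term in $c_p^+$) is essential.

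With the amplitude controlled, I invoke Lemma \ref{Lem:Cp+u}: its bound $|C_{p,+}(u)|\leqslant c_p^+$ is independent of $u$ and valid in the limit $\varepsilon\to0$, so it comes outside the integral. The remaining Gaussian moment is $\int_{\mathbb{R}}e^{-nu^2}u^{2p}\,du=\Gamma(p+\tfrac12)/n^{p+1/2}$, consistent with the moment formula already recorded in the text. Assembling the three factors gives
\[
|\varepsilon_p^+(n,\alpha,\beta)|\leqslant \sqrt n\cdot 1\cdot c_p^+\cdot\frac{\Gamma(p+\tfrac12)}{n^{p+1/2}}=\frac{c_p^+\,\Gamma(p+\tfrac12)}{n^{p}},
\]
which is exactly the asserted inequality.

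The main obstacle is not any single computation but the interchange of the limit $\varepsilon\to0$ with the $u$-integration and the justification that the single $\varepsilon$-independent constant $c_p^+$ bounds $C_{p,+}(u)$ throughout; this is precisely what Lemma \ref{Lem:Cp+u} delivers, since its Steps~1--4 produce estimates that are uniform as $\varepsilon\to0$ and the arc contributions over $\Gamma_{-1}^+$ and $\Gamma_{1}^+$ are shown to vanish in that limit via \eqref{Eq:Ga-1} and \eqref{Eq:Ga1}. Once that lemma and the amplitude bound above are in place, the remaining steps are routine.
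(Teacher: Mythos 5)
Your proposal is correct and follows essentially the same route as the paper: take absolute values in \eqref{Eq:epsilon+}, bound the prefactor $\bigl|\sqrt{e^{2i\gamma}-1}/(4\pi^{3/2}e^{i\gamma}(1-e^{i\gamma})^{\alpha}(e^{i\gamma}+1)^{\beta})\bigr|$ by $1$ (the paper does this via $\sqrt{|e^{2i\gamma}-1|}\leqslant\sqrt2$, $|e^{i\gamma}-1|^{-\alpha}\leqslant\sqrt2$, $|e^{i\gamma}+1|^{-\beta}\leqslant\sqrt2$, while you compute the same quantity in half-angle form and get the sharper constant $\pi^{-3/2}$), then apply the uniform bound $|C_{p,+}(u)|\leqslant c_p^+$ from Lemma \ref{Lem:Cp+u} and the Gaussian moment $\int_{\mathbb R}e^{-nu^2}u^{2p}\,du=\Gamma(p+\tfrac12)n^{-p-1/2}$. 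Your explicit attention to the $\varepsilon\to0$ interchange is a point the paper passes over silently, but it changes nothing of substance.
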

 
\begin{proof}
It follows from equation \eqref{Eq:epsilon+} and Lemma \ref{Lem:Cp+u}  that
\begin{align*}
|\varepsilon_p^+(n,\alpha,\beta)|=
\left|-\frac{\sqrt{(e^{2i\gamma}-1)n}}{4\pi^{3/2}e^{i\gamma}(1-e^{i\gamma})^{\alpha}(e^{i\gamma}+1)^{\beta}}\lim_{\varepsilon\to0}\int_{-\frac{ \sqrt{-\log \varepsilon}}2}^{\frac{ \sqrt{-\log \varepsilon}}2}\,e^{-nu^2}u^{2p}C_{p,+}(u)du\right|\leqslant\frac{c_p^+\Gamma(p+\frac12)}{n^p}.
\end{align*}
To reach the last inequality, we have made use of the formulas
\[
\sqrt{e^{2i\gamma}-1}\leqslant \sqrt{2}, \qquad |e^{i\gamma}-1|^{-\alpha}\leqslant \sqrt{2} , \qquad |e^{i\gamma}+1|^{-\beta}\leqslant \sqrt{2}.
\]
\end{proof}

 By symmetry,  $|\varepsilon_p^-(n,\alpha,\beta)|$ has the same estimate. 

 We define 
 \begin{align*}
 \varepsilon_p(n,\alpha,\beta):=\Re (\varepsilon_p^+(n,\alpha,\beta)e^{(N\gamma+\kappa)i}),
 \end{align*}
 where $N=\frac{\alpha+\beta+1}{2}+n$ and $\kappa=-\Big(\frac{\alpha}{2}+\frac{1}{4}\Big)\pi$.
It follows from equation \eqref{Eq:I2juer} that 
 \begin{align*}
 \varepsilon_p(n,\alpha,\beta)&\leqslant\left|\Re \big(\A_p(e^{i\gamma})e^{(N\gamma+\kappa)i}\big)\right|\frac1{n^p}+|\Re (\varepsilon_{p+1}^+(n,\alpha,\beta)|\\
 &\leqslant\left|\Re \big(\A_p(e^{i\gamma})e^{(N\gamma+\kappa)i}\big)\right|\frac1{n^p}+|\varepsilon_{p+1}^+(n,\alpha,\beta)|.
 \end{align*}
It follows from the fact $I_-=\overline{I_+}$, equation \eqref{Eq:Jc2} can be rewritten as 
\begin{align}\label{Eq:Jc3}
    P_n^{(\alpha,\beta)}(\cos\gamma)=
    2(1-\cos\gamma)^{-\alpha}(\cos\gamma+1)^{-\beta} \Re I_+.
\end{align}
Recall the formula
\[
f(z_\pm)=-\log2\pm i(\pi -\gamma),
\]
and note that
\begin{align*}
\frac{(-1)^ne^{i\gamma}(1-e^{i\gamma})^{\alpha}(e^{i\gamma}+1)^{\beta}e^{-nf(z_+)}}{2^n (1-\cos\gamma)^{\alpha}(1+\cos\gamma)^{\beta}\sqrt{e^{2i\gamma}-1}}
%=\frac{2^{\alpha+\beta}(-i\sin\frac{\gamma}{2})^{\alpha}(\cos\frac{\gamma}{2})^{\alpha}e^{N\gamma i} }{2^{\alpha+\beta}\sin^{2\alpha}\frac{\gamma}{2} \cos^{2\beta}\frac{\gamma}{2}\sqrt{2i\sin\gamma} }
=\frac{e^{(N\gamma+\kappa)i} }{2\sin^{\alpha+\frac{1}{2}}\frac{\gamma}{2} \cos^{\beta+\frac{1}{2}}\frac{\gamma}{2}}.
\end{align*}
By equation \eqref{Eq:I2juer}, the Jacobi polynomials in equation \eqref{Eq:Jc3} can be represented by 
\begin{equation*}
    P_n^{(\alpha,\beta)}(\cos\gamma)
=\frac{1}{\sin^{\alpha+\frac{1}{2}}\frac{\gamma}{2} \cos^{\beta+\frac{1}{2}}\frac{\gamma}{2}\sqrt {n\pi}}\Big\{\sum_{j=0}^{p-1}\frac{\Re\Big(\A_j(e^{i\gamma})e^{(N\gamma+\kappa)i}\Big)}{n^{j}}+\varepsilon_p(n,\alpha,\beta)\Big\}.
\end{equation*}

Now we can present the main theorem of this case, as stated in Theorem \ref{Th:C2}.

\section{The computation for the coefficients}\label{APP:A}

We start from equation \eqref{Eq:Aj1}. By applying Cauchy's theorem, we can deform the contour of the integration to a small loop around $z_+=e^{\gamma}$ with counter-clockwise orientation. By setting  $u=z e^{-\gamma}$, equation \eqref{Eq:Aj1} can be rewritten as 
\begin{equation}\label{Eq:aju}
\A_j(e^{\gamma})=-\frac{\Gamma(j+1/2)\sqrt{e^{2\gamma}-1}}{4\pi^{3/2}(e^{\gamma}-1)^{\alpha}(e^{\gamma}+1)^{\beta}}\oint_{(1+)}\,\frac{g(ue^{\gamma})}{(w(ue^{\gamma})^2)^{j+1/2}}du.
\end{equation}
To compute these coefficients, we require the following power expansions around $u=1$:
\begin{align}\label{Eq:Az-1}
\begin{split}
(z-1)^\alpha&=e^{\gamma\alpha}(u-1+(1-e^{-\gamma}))^{\alpha}=(e^{\gamma}-1)^\alpha\sum_{k=0}^{\infty}\frac{(-1)^k(-\alpha)_k}{k!}\Big(\frac{u-1}{1-e^{-\gamma}}\Big)^k,
\end{split}
\end{align}
\begin{align}\label{Eq:Az+1}
\begin{split}
(z+1)^\beta&=e^{\gamma\beta}(u-1+(1+e^{-\gamma}))^\beta=(e^{\gamma}+1)^\beta\sum_{k=0}^{\infty}\frac{(-1)^k(-\beta)_k}{k!}\Big(\frac{u-1}{1+e^{-\gamma}}\Big)^k
\end{split}
\end{align}
and
\begin{align}\label{Eq:Az-x}
\begin{split}
\frac1{z-\cosh\gamma}&=\frac1{e^\gamma}\frac{1}{u-1+e^{-\gamma}\sinh\gamma}=\frac{2}{e^{\gamma}-e^{-\gamma}}\sum_{k=0}^\infty(-1)^k\Big(\frac{2(u-1)}{1-e^{-2\gamma}}\Big)^k.
\end{split}
\end{align}
By a straightforward calculation, it will be shown
\[
\sum_{j=0}^\infty \alpha_jz^j\sum_{j=0}^\infty \beta_jz^j\sum_{j=0}^\infty \gamma_jz^j=\sum_{k=0}^\infty \Big(\sum_{j=0}^{k-i}\sum_{i=0}^k \alpha_i\beta_j\gamma_{k-i-j}\Big)z^k.
\]
 Thus, from equations \eqref{Eq:gz1} and \eqref{Eq:Az-1}-\eqref{Eq:Az-x}, the power series expansion of $g(ue^{\gamma})$ 
 around $u=1$ can be expressed as
\begin{align}\label{Eq:Ag1}
\notag g(ue^{\gamma})&=\frac{2(e^{\gamma}-1)^{\alpha}(e^{\gamma}+1)^{\beta}}{e^{\gamma}-e^{-\gamma}}\sum_{k=0}^\infty \Big(\sum_{j=0}^{k-i}\sum_{i=0}^k \frac{(-1)^{k}(-\alpha)_i(-\beta)_j\cdot 2^{k-i-j}} {i!j!(1-e^{-\gamma})^i(1+e^{-\gamma})^j(1-e^{-2\gamma})^{k-i-j}}\Big)(u-1)^k\\
&=\frac{2(e^{\gamma}-1)^{\alpha}(e^{\gamma}+1)^{\beta}}{e^{\gamma}-e^{-\gamma}}h(ue^{\gamma}),
\end{align}
where
\[
h(ue^{\gamma})=\sum_{k=0}^\infty (-1)^{k}a_k(e^{\gamma})(u-1)^k
\]
with
\begin{align*}
a_k(t)=\sum_{j=0}^{k-i}\sum_{i=0}^k \frac{2^{k-i-j}(-\alpha)_i(-\beta)_jt^{2k-i-j}}{i!j!(t-1)^i(t+1)^j(t^2-1)^{k-i-j}}.
\end{align*}
The first three terms for $t=e^{\gamma}$ are
\[
a_0(e^{\gamma})=1, \qquad a_1(e^{\gamma})=\frac{(2-\alpha-\beta)e^{2\gamma}-(\alpha-\beta)}{e^{2\gamma}-1},
\]
\[
 a_2(e^{\gamma})=\frac{4e^{4\gamma}}{(e^{2\gamma}-1)^2}-\Big(\frac{2\alpha e^{3\gamma}}{(e^{\gamma}-1)(e^{2\gamma}-1)}+\frac{2\beta e^{3\gamma}}{(e^{\gamma}+1)(e^{2\gamma}-1)}\Big)+\Big(\frac{\alpha(\alpha-1) e^{2\gamma}}{2(e^{\gamma}-1)^2}+\frac{\beta(\beta-1) e^{2\gamma}}{2(e^{\gamma}+1)^2}\Big)+\frac{\alpha\beta e^{2\gamma}}{e^{2\gamma}-1}.
\]
%%%%%
Now the coefficient $\A_j(e^{\gamma})$ in equation \eqref{Eq:aju} can be represented by
\begin{equation}\label{Eq:ajuh}
\A_j(e^{\gamma})=-\frac{\Gamma(j+1/2)e^{\gamma}}{2\pi^{3/2}\sqrt{e^{2\gamma}-1}}\oint_{(1+)}\,\frac{h(ue^{\gamma})}{(w(ue^{\gamma})^2)^{j+1/2}}du.
\end{equation}
It follows from equation \eqref{Eq:wz1} that  we have the following power series around $1$:
\begin{equation*}
\begin{split}
w(ue^{\gamma})^2&=\log(ue^{\gamma}-\cosh\gamma)-\log(ue^{\gamma}+1)-\log(ue^{\gamma}-1)+\log (2e^{ \gamma})\\
% &=\log(1+\frac{e^{\gamma}}{\sinh\gamma}(u-1))-\log(1+\frac{e^{\gamma}}{1+e^\gamma}(u-1))-\log(1+\frac{e^{\gamma}}{e^{\gamma}-1}(u-1))\\
% &=\sum_{k=1}^\infty\frac{(-1)^{k-1}}{k}\Big[\Big(\frac{e^{\gamma}}{\sinh\gamma}\Big)^k-\Big(\frac{e^{\gamma}}{1+e^\gamma}\Big)^k-\Big(\frac{e^{\gamma}}{e^\gamma-1}\Big)^k\Big](u-1)^k\\
% &=-\frac{1+\coth\gamma}{2}(u-1)^2-\sum_{k=3}^\infty (-1)^{k}b_k(e^{\gamma})(u-1)^k\\
&=-\frac{e^{2\gamma}}{e^{2
\gamma}-1}(u-1)^2\Big(1+\frac{e^{2
\gamma}-1}{e^{2\gamma}}\sum_{k=1}^\infty (-1)^{k} b_{k+2}(e^{\gamma})(u-1)^k\Big),
\end{split}
\end{equation*}
where
\begin{align*}
b_{k+2}(t)=\frac{t^{k+2}}{k+2}\frac{2^{k+2}t^{k+2}-\left((t-1)^{k+2}+(t+1)^{k+2}\right)}{(t^2-1)^{k+2}},
\end{align*}
for $k\geqslant1.$ The first two terms for $t=e^{\gamma}$ are 
\[
b_3(e^{\gamma})=\frac{2e^{4\gamma}}{(e^{2\gamma}-1)^2}, \qquad b_4(e^{\gamma})=\frac{e^{4\gamma}(7e^{2\gamma}+1)}{(e^{2\gamma}-1)^3}.
\]

%%%%%
Applying Lauwerier's method \cite[Equs. (8) and (9)]{L51} (with a slightly different notation $Q_j(s)=(-1)^j q_j(-s)$), equation \eqref{Eq:ajuh} becomes 
\begin{align}\label{Eq:Ajga}
\A_{j}(e^{\gamma})&=\frac{ (-1)^{j}}{\sqrt{\pi}}\Big(\frac{e^{2
\gamma}-1}{e^{2\gamma}}\Big)^j\int_0^{\infty}e^{-s}s^{j-1/2}Q_{2j}\Big(\frac{e^{2
\gamma}-1}{e^{2\gamma}}s\Big)ds,
\end{align}
where $Q_0(\tau)=1$, and 
\begin{align*}
Q_j(\tau)=a_j(e^{\gamma})-\sum_{k=1}^j b_{k+2}(e^{\gamma})\int_0^{\tau} Q_{j-k}(s)ds.
\end{align*}
The first two terms are given by
\[
Q_1(\tau)=a_1(e^{\gamma})-b_3(e^{\gamma})\tau,
\]
\[
Q_2(\tau)=a_2(e^{\gamma})-\big(a_1(e^{\gamma})b_3(e^{\gamma})+b_4(e^{\gamma})\big)\tau+\frac{b_3(e^{\gamma})^2}{2}\tau^2.
\]
A simple calculation yields  
\[
\A_{0}(e^{\gamma})=1.
\]
and
\begin{align*}
\A_{1}(e^{\gamma})&=-\frac{15b_3^2}{16}\Big(\frac{e^{2
\gamma}-1}{e^{2\gamma}}\Big)^3+\frac{3(a_1b_3+b_4)}{4}\Big(\frac{e^{2
\gamma}-1}{e^{2\gamma}}\Big)^2-\frac{a_2}{2}\frac{e^{2
\gamma}-1}{e^{2\gamma}}.
\end{align*}
Performing the substitution $t=e^{\gamma}$, \eqref{Eq:Ajga} gives the more general form 
\begin{align*}
\A_{j}(t)&=\frac{ (-1)^{j}}{\sqrt{\pi}}\Big(\frac{t^2-1}{t^2}\Big)^j\int_0^{\infty}e^{-s}s^{j-1/2}Q_{2j}\Big(\frac{t^2-1}{t^2}s\Big)ds
\end{align*}
where $Q_0(\tau)=1$, and 
\begin{align*}
Q_j(\tau)=a_j(t)-\sum_{k=1}^j b_{k+2}(t)\int_0^{\tau} Q_{j-k}(s)ds.
\end{align*}

From equation \eqref{Eq:Aj2},   the contour of integration can be changed to a small loop around $z_+=e^{i \gamma}$ with counter-clockwise orientation. Introducing the substitution  $u=ze^{-i\gamma}$ yields 
\begin{equation}\label{Eq:Ajga2}
\frac{\Gamma(j+1/2)\sqrt{e^{2i\gamma}-1}}{4\pi^{3/2}(1-e^{i\gamma})^{\alpha}(e^{i\gamma}+1)^{\beta}}\oint_{(1+)}\,\frac{g(ue^{i\gamma})}{(w(ue^{i\gamma})^2)^{j+1/2}}du,
\end{equation}
where 
\begin{align}\label{Eq:Ag2}
g(ue^{i\gamma})=\frac{2(1-e^{i\gamma})^{\alpha}(e^{i\gamma}+1)^{\beta}}{e^{i\gamma}-e^{-i\gamma}}\sum_{k=0}^\infty a_k(e^{i\gamma})(-1)^{k}(u-1)^k
\end{align}
is slightly different from \eqref{Eq:Ag1}. 
Since $\cos\gamma=\cosh(i\gamma)$, the equations \eqref{Eq:Ag1} and \eqref{Eq:Ag2} confirm that the integral \eqref{Eq:Ajga2} is identical to the integral \eqref{Eq:aju} when $i\gamma$ is replaced by $\gamma$. This confirms that the right-hand side of \eqref{Eq:Aj2} can be represented by $\mathcal{A}(e^{i\gamma})$. 

\begin{appendices}
\section{Proof of part of  the lemmas}\label{APP:L}

\subsection{Proof of Lemma \ref{Le:|w||w-u|1}}\label{APP:L1}
Case 1. If $z\in \Gamma_o^0$, we can express $z$ in the form $z=\cosh\gamma+r e^{i\theta}$, where the radial parameter $r$ is subject to $r=10(\cosh\gamma+1)>\max\{\cosh\gamma+1,e^{\gamma}\}$, and $\theta \in(-\pi, \pi)$. 
With this parametrization in place, we can now apply the result from equation \eqref{Eq:wz1} to deduce that
\begin{equation*}
\begin{split}
|w(z)|^2=&\Big|\log re^{i\theta}-\log(z^2-1)+\log\big(2e^{\gamma}\big)\Big|
\geqslant|\log((r-\cosh\gamma)^2-1)|-|\log r|-\log\big(2e^{\gamma}\big)\\
\geqslant& \log[4(\cosh\gamma+1)]-\gamma\geqslant 4\delta^2.
\end{split}
\end{equation*}
Here we have used the definition of $\delta$ in equation \eqref{Eq:delta} to deduce the last inequality.  This implies that
 \begin{align}\label{Eq:wo}
 | w(z)|
 \geqslant 2\delta .
 \end{align}
% Therefore, 
%   when $u\in[-\delta,\delta]$, it yields that
%  \begin{align}
%  | w(z)-u|\geqslant& |w(z)|-\delta \geqslant \delta.
% \end{align}

Case 2. When $z\in \Gamma_{(x,o)^{\pm}}$, we can exploit the inherent symmetry of the problem to simplify our analysis. Without loss of generality, we focus on the subset $z\in \Gamma_{(x,o)^+}$.  For such $z$, we express $w(z)=\mu+i\nu$.  To thoroughly examine the behavior of w(z) within this region, we further divide our analysis into three distinct subcases:

Subcase 2.1. When $z=a\in \Gamma_{(x,o)^+}^1=[-10(\cosh\gamma+1)+\cosh\gamma, -1]^+ $,   we select the branch cut of the complex logarithm defined by equation \eqref{Eq:wz1} so that
\begin{equation*}
(\mu+i\nu)^2=-\log\frac{a^2-1}{x-a}-\pi i+\log (2e^{\gamma}).
\end{equation*}
By equating the imaginary parts on both sides of this equation, we find
\[
\mu =-\frac{\pi}{2\nu},
\]
which leads to the inequality
 \begin{align}\label{Eq:|w|NG+pi}
|w^2(z)|=\mu^2+\nu^2\geqslant \pi\geqslant 4\delta^2.
\end{align}

Subcase 2.2. For $z\in 
   \Gamma_{(x,o)^+}^2=  [-1, 1]^+$, the function $w^2(z)$ 
 is real-valued and exhibits a specific trend: it decreases from $+\infty$ to $w^2(z_-)$ as $z$ moves from $-1$ to $z_-$, and then increases from $w^2(z_-)$ back to $+\infty$  as $z$ approaches $1$. This behavior is visually depicted in Fig. \ref{Fig:w1}.  
Consequently, we deduce from equation \eqref{Eq:wz1} that
\begin{align*}
|w(z)|^2\geqslant  w^2(z_-)=  2\gamma\geqslant 4\delta^2.
\end{align*}

Subcase 2.3. When $z\in\Gamma_{(x,o)^+}^3=[1, \cosh\gamma-\frac{\cosh\gamma-1}{4(\cosh\gamma+2)^4}]^+$, by employing a similar argument to that used in Subcase 2.1, we find
\begin{align*}
\mu =\frac{\pi}{2\nu},
\end{align*}
and \eqref{Eq:|w|NG+pi} holds.

By a combination of the results of all three subcases, we conclude that
   \begin{align}\label{Eq:|w|-1}
 | w(z)|\geqslant 2\delta.
 \end{align}

Case 3. For $z\in\Gamma_x^0$, express $z$ as $z=\cosh\gamma+re^{i\theta}$, where $\theta \in \big(-\pi,\pi\big)$ and $r=\frac{\cosh\gamma-1}{4(\cosh\gamma+2)^4}$. Applying equation \eqref{Eq:wz1}, we obtain the following inequalities:
\begin{equation*}
\begin{split}
|w(z)|^2=&\Big|\log re^{i\theta}-\log(z^2-1)+\log(2e^{\gamma})\Big|
\geqslant|\log r|-|\log((\cosh\gamma+r)^2+1)|-\log(2e^{\gamma})\\
\geqslant&\log\frac{(\cosh\gamma+2)^2}{\cosh\gamma(\cosh\gamma-1)}\geqslant 4\delta^2.
\end{split}
\end{equation*}
This establishes the lower bound
 \begin{align}\label{Eq:|wz|x}
 | w(z)|
 \geqslant 2\delta.
 \end{align}

For $u\in[-\delta,\delta]$ and $z\in \Gamma^0$, inequality \eqref{Eq:le|w|} is obtained directly from \eqref{Eq:wo}, \eqref{Eq:|w|-1}, and \eqref{Eq:|wz|x}. Inequality \eqref{Eq:le|w-u|} is derived similarly, but also utilizes the triangle inequality.

\subsection{Proof of Lemma \ref{Le:g1}}\label{APP:L2}

We proceed to prove each statement of this lemma.

  1.  When $z\in\Gamma_x^0$, we express $z=\cosh\gamma+r e^{i\theta}$, where $r=\frac{\cosh\gamma-1}{4(\cosh\gamma+2)^4}<\min\{\frac{\cosh\gamma-1}{4},\frac{1}{4}\}$ and $\theta \in(-\pi, \pi)$. It then follows that
\[0<\cosh\gamma-\frac{\cosh\gamma-1}{4(\cosh\gamma+2)^4}-1\leqslant |z-1|\leqslant \cosh\gamma+\frac{\cosh\gamma-1}{4(\cosh\gamma+2)^4}-1,\]
and
\[\cosh\gamma-\frac{\cosh\gamma-1}{4(\cosh\gamma+2)^4}+1\leqslant |z+1|\leqslant \cosh\gamma+\frac{\cosh\gamma-1}{4(\cosh\gamma+2)^4}+1.\]
Given  $-1<\alpha\leqslant 0$ and $\beta\leqslant \alpha$, we can derive the following bounds
 \begin{align*}
&\max \Big\{\Big(\cosh\gamma-\frac{\cosh\gamma-1}{4(\cosh\gamma+2)^4}-1\Big)^{\alpha}, \Big(\cosh\gamma+\frac{\cosh\gamma-1}{4(\cosh\gamma+2)^4}-1\Big)^{\alpha}\Big\}\\
&\leqslant \Big(\cosh\gamma-\frac{\cosh\gamma-1}{4(\cosh\gamma+2)^4}-1\Big)^{\alpha}\leqslant 2(\cosh\gamma-1)^{\alpha}
\end{align*}
and
 \begin{align*}
 &\max \Big\{\Big(\cosh\gamma-\frac{\cosh\gamma-1}{4(\cosh\gamma+2)^4}+1\Big)^{\beta},\Big(\cosh\gamma+\frac{\cosh\gamma-1}{4(\cosh\gamma+2)^4}+1\Big)^{\beta}\Big\}\\
&\leqslant \Big(\cosh\gamma-\frac{\cosh\gamma-1}{4(\cosh\gamma+2)^4}+1\Big)^{\beta}\leqslant 1.
\end{align*}
Substituting these bounds into the expression for $g(z)$ given in equation \eqref{Eq:gz1} yields inequality \eqref{Eq:Gax0g}.

2.  For $z\in \Gamma_{(x,o)^+}^1$, we have $|z-\cosh\gamma|\geqslant \cosh\gamma+1$ and  $2\leqslant |z-1|\leqslant 10(\cosh\gamma+1)-\cosh\gamma+1$; inequality \eqref{Eq:Gaxo1g} is directly obtained from the expression of $g(z)$ in equation \eqref{Eq:gz1}.

For $z\in \Gamma_{(x,o)^+}^2$, noting that $|z-\cosh\gamma|\geqslant \cosh\gamma-1$, 
inequality \eqref{Eq:Gaxo2g} follows from the expression of $g(z)$ in equation \eqref{Eq:gz1}.

For $z\in \Gamma_{(x,o)^+}^3$, we have $|z-\cosh\gamma|\geqslant \frac{\cosh\gamma-1}{4(\cosh\gamma+2)^4}$ and $2 \leqslant |z+1|\leqslant  \cosh\gamma+1-\frac{\cosh\gamma-1}{4(\cosh\gamma+2)^4}$. Since $\beta\leqslant \alpha\leqslant0$,  inequality \eqref{Eq:Gaxo3g} holds because $\max \{2^{\beta}, \big( \cosh\gamma+1-\frac{\cosh\gamma}{4(\cosh\gamma+2)^4}\big)^{\beta}\}\leqslant 1$.

3. For $z\in\Gamma_{o}^0$, we have $|z-\cosh\gamma|=10(\cosh\gamma+1)$, $10(\cosh\gamma+1)-\cosh\gamma+1\leqslant |z-1|\leqslant 10(\cosh\gamma+1)+\cosh\gamma-1$  and $ 10(\cosh\gamma+1)-\cosh\gamma-1\leqslant |z+1|\leqslant 10(\cosh\gamma+1)+\cosh\gamma+1$ . Given  $-1<\alpha\leqslant 0$ and $\beta\leqslant \alpha$, a simple computation yields 
 \begin{align*}
\max \Big\{\Big(9\cosh\gamma+11\Big)^{\alpha}, \Big(11\cosh\gamma+9\Big)^{\alpha}\Big\}\leqslant 1
\end{align*}
and 
 \begin{align*}
\max \Big\{\Big(9(\cosh\gamma+1)\Big)^{\beta},\Big(11(\cosh\gamma+1)\Big)^{\beta}\Big\}\leqslant 1.
\end{align*}
Thus, inequality \eqref{Eq:Gao0g} is established using the expression of $g(z)$  in equation \eqref{Eq:gz1}.

4. For $z\in\Gamma_{11}$, recalling the definition of $\Gamma_{11}$ in \eqref{Eq:Gam11}, we get $|z-\cosh\gamma|=\sinh\gamma$. Given $\gamma>0$, $-1<\alpha\leqslant 0$ and $\beta\leqslant \alpha$, we derive
 \begin{align*}
&\max|z-1|^{\alpha}\leqslant\max \Big\{(1-z_-)^{\alpha}, (z_+-1)^{\alpha}\Big\}\leqslant\max \Big\{(1-e^{-\gamma})^{\alpha}, (e^{\gamma}-1)^{\alpha}\Big\}\leqslant \Big(\frac{e^{\gamma}-1}{e^{\gamma}}\Big)^{\alpha},\\
&\max|z+1|^{\beta}\leqslant\max \Big\{
(e^{-\gamma}+1)^{\beta},(e^{\gamma}+1)^{\beta}\Big\}\leqslant 1.
 \end{align*}
Applying the expression of $g(z)$  in equation \eqref{Eq:gz1} gives \eqref{Eq:Ga11g}.
 
For $z\in[e^{-\gamma},1]$, we have $|z-\cosh\gamma|=\cosh\gamma-1$ and 
 \begin{align*}
\max|z+1|^{\beta}\leqslant\max \Big\{
(e^{-\gamma}+1)^{\beta},2^{\beta}\Big\}\leqslant 1.
 \end{align*}
 Therefore, inequality \eqref{Eq:e-ga1g} holds.

\subsection{Proof of Lemma \ref{Lem:zz-x}}\label{APP:L5}

For any $z\in\Gamma^+_{\varepsilon}$,  we first derive the modulus bound. By construction of $ \Gamma^+_{\varepsilon}$ via equation \eqref{Eq:zs}, we have
\[
|z|\leqslant |re^{i\gamma}|+ \sqrt{(1-r)|1-re^{2\gamma i}|}=1+\sqrt 2,
\]
 where $0\leqslant r\leqslant 1$.
 This establishes the first inequality \eqref{Eq:|z|le1}.

Next, we address the lower bound for $|z-\cos\gamma|$.
% \begin{align}\label{Eq:zx}
% |z-\cos\gamma|>\kappa_+>0, \qquad~~~~\forall \,z\in\Gamma^+_{\varepsilon}.
% \end{align}
From the parametrization \eqref{Eq:zs}, we express $z-\cos \gamma$ as
\begin{align*}
z-\cos \gamma=s-\cos \gamma\pm \sqrt{(s-\cos \gamma)^2+\sin^2\gamma},
\end{align*}
where $s=r e^{i \gamma}$.
Applying the triangle inequality to $ |z - \cos\gamma| $, we obtain
\begin{align*}
|z-\cos\gamma|&\geqslant \sqrt{\sin^2\gamma+|s-\cos \gamma|^2}-|s-\cos \gamma|.
\end{align*}
To simplify further, we use the inequality \eqref{Eq:s-x}, which guarantees $ |s - \cos \gamma| \leqslant 1 $. Substituting this bound, we find
\begin{align*}
|z-\cos\gamma|
\geqslant  \sqrt{\Big(\frac{\sin^2\gamma}{3}+|s-\cos \gamma|\Big)^2}-|s-\cos \gamma|.
\end{align*}
Thus, inequality \eqref{Eq:|z-x|} holds uniformly for all $ z \in \Gamma^+_{\varepsilon} $.

\subsection{Proof of Lemma \ref{Le:wcir} }\label{APP:L6}

1. 
For $z\in \Gamma_{1}^+$, by applying equation \eqref{Eq:wz2} and the triangle inequality, we find
    \begin{align*}
    |w(z)|=&\Big|\Big[\log(z-x)-\log(1-z)-\log(1+z)+\log2-(\pi -\gamma)i\Big]^{1/2}\Big| > \sqrt{\frac{-\log \varepsilon}2}.
  %  \\>&\Big[|\log \varepsilon|-|\log(1+\varepsilon+\cos\gamma)|-|\log(2+\varepsilon)|-\log2\Big]^{1/2}
    \end{align*}
Since inequality \eqref{Eq:var2} implies that $0<\varepsilon<\min\{\frac{1}{144},1-\cos\gamma\}$, we have
\[
|w(z)|\geqslant \sqrt{\frac{|\log\varepsilon|}2}=\sqrt{\log 12}>1.
\]
For $u\in[-\frac{ \sqrt{-\log \varepsilon}}2, \frac{ \sqrt{-\log \varepsilon}}2]$, the reverse triangle inequality gives
\[
|w(z)-u|\geqslant |w(z)| -|u| \geqslant \frac{ (\sqrt{2}-1)\sqrt{-\log \varepsilon}}2>\frac14.
\]
    
The same argument applies to $z \in \Gamma_{-1}^+$, and one
obtains inequality \eqref{Eq:L|w|} for $z\in \Gamma_{-1}^+$.

%%%%%%%%%%%%%
2.  
Set $z=\cos\gamma+r e^{i\theta}$.  It follows from equation \eqref{Eq:wz2} that
\begin{equation*}
\begin{split}
w(z)=&\Big[-\log\big(-re^{i\theta}-2\cos\gamma-\frac{(\cos\gamma^2-1)e^{-i\theta}}{r}\big)+\log2-(\pi-\gamma)i\Big]^{1/2}
%\\=&\Big[-\log\big((r-\frac{\sin^2\gamma}{r})\cos\theta+2\cos\gamma+(r+\frac{\sin^2\gamma}{r})\sin\theta i\big)+\log2+\gamma i\Big]^{1/2}
\\=&\Big[-\frac12\log\big(r^2+\frac{\sin^4\gamma}{r^2}-2\sin^2\gamma\cos(2\theta)+4\cos\gamma(r-\frac{\sin^2\gamma}{r})\cos\theta+4\cos^2\gamma\big)
\\
&+\log2+i(\gamma-\arctan\frac{\sin\theta(r^2+\sin^2\gamma)}{\cos\theta(r^2-\sin^2\gamma)+2r \cos\gamma})\Big]^{1/2}.
\end{split}
\end{equation*}
This implies 
\begin{align}\label{Eq:phio}
 \Re w^2(z)&=-\frac12\log\Big(\frac{r^2}4+\frac{\sin^4\gamma}{4r^2}-\frac{\sin^2\gamma\cos(2\theta)}2+\cos\gamma(r-\frac{\sin^2\gamma}{r})\cos\theta+\cos^2\gamma\Big).
\end{align}
For $r > 1$ and $\theta \in (0, \pi)$, we observe from equation \eqref{Eq:phio} that 
\[
 \Re w^2(z)\leqslant -\frac12\log\Big(\frac{r^2}4-\frac12-r\Big).
\]
When $z\in \Gamma_{o}^+$, choosing $r = 4e$, we obtain
\begin{equation}\label{Eq:rew2z}
\Re w^2(z)\leqslant -\frac 12\log (4e(e-1)-\frac 12 )<-1. ~~~  
\end{equation}
For $r<\sin^2\gamma ~{\rm and}~\theta\in[0,\pi]$, equation \eqref{Eq:phio} yields 
\begin{align*}
 \Re w^2(z)\leqslant -\frac12\log\Big( \frac{r^2}4+\frac{\sin^4\gamma}{4r^2}-\frac12-\frac{\sin^2\gamma}{r}+r+\cos^2\gamma\Big).
\end{align*}
When $z\in \Gamma_{x}^+$, choosing $r=\frac{\sin^2\gamma}{8}$, we have 
\begin{equation}\label{Eq:rew2x}
\Re w^2(z)\leqslant -\frac12\log(\frac{\sin^4\gamma}{256}+\frac{15}2+\frac{\sin^2\gamma}{8}+\cos^2\gamma)< -1. 
\end{equation}
Since $|w(z)|\geqslant |\Re w^2(z)|^{\frac{1}{2}} $, inequalities \eqref{Eq:rew2z} and \eqref{Eq:rew2x} imply inequality \eqref{Eq:rew2}.

For  $z\in \Gamma_{o}^+\cup \Gamma_{x}^+$, we note that
\begin{align*}
   \Re w^2(z)+\Im w^2(z) i= w^2(z)=(\Re w(z))^2-(\Im w(z))^2+2\Re w(z)\Im w(z) i.
\end{align*}
The real part of the left hand side equals  the real part of the right hand side, i.e., 
\begin{align*}
\Re w^2(z)=(\Re w(z))^2-(\Im w(z))^2.   
\end{align*}
Moreover, inequalities \eqref{Eq:rew2z} and  \eqref{Eq:rew2x} yield that 
\begin{align*}
   |\Im w(z)|=\sqrt{(\Re w(z))^2-\Re w^2(z)}>1.   
\end{align*}
Therefore, for all $u \in \mathbb{R}$
\begin{align*}
 | w(z)-u|\geqslant& |\Im (w(z))|>1.
 \end{align*}
% This completes the lemma. 

\subsection{Proof of Lemma \ref{Lem:imw1}}\label{APP:L7}

  For $z\in [1+\varepsilon,  \cos\gamma+4e]^+=\Gamma_{(o,1)}^+$,  it follows from Tab. \ref{Tab:argfw} that $\arg w\in (\pi, \frac{3\pi}{2})$. Using Remark \ref{rem:log} and equation \eqref{Eq:wz2}, we choose the branch cut
\begin{align}\label{Eq:w21x}
 w^2(z)=\log \frac{2(z-\cos\gamma)}{z^2-1}+i\gamma
=\Big(\log^2\frac{2(z-\cos\gamma)}{z^2-1}+\gamma^2\Big)^{\frac12}e^{i\arg w^2(z)},
\end{align}
where the argument $\arg w^2(z)$ is defined piece-wisely by
\begin{equation}\label{Eq:argwo1}
   \arg w^2(z)= 
    \begin{cases}
        \arctan\frac{\gamma}{\log\frac{2(z-\cos\gamma)}{z^2-1}} +2\pi~~~~~~~&z\in[1+\varepsilon,  1+2\sin\frac{\gamma}{2});\\
        \frac{5\pi}{2} &z=1+2\sin\frac{\gamma}{2};\\
       \arctan\frac{\gamma}{\log\frac{2(z-\cos\gamma)}{z^2-1}}+3\pi&z\in(1+2\sin\frac{\gamma}{2}, \cos\gamma+4e].
    \end{cases}
\end{equation}

Similarly, for $z\in \Gamma^+_{(x,-1)}=[-1+\varepsilon, \cos\gamma-\frac{\sin^2\gamma}{8}]^+$,  it follows from Tab. \ref{Tab:argfw} that $\arg w\in (0,\frac{\pi}{2})$. 
Remark \ref{rem:log} and equation \eqref{Eq:wz2}  imply that  we can represent $w^2(z)$ as in equation \eqref{Eq:w21x},
while the $\arg w^2(z)$ is given by 
\begin{equation}\label{Eq:argw-1x}
   \arg w^2(z)= 
    \begin{cases}
        \arctan\frac{\gamma}{\log \frac{2(z-\cos\gamma)}{z^2-1}}~~~~~~~&z\in[-1+\varepsilon,  1-2\sin\frac\gamma2);\\
        \frac{\pi}{2} &z=1-2\sin\frac\gamma2;\\
       \arctan\frac{\gamma}{\log \frac{2(z-\cos\gamma)}{z^2-1}}+\pi&z\in(1-2\sin\frac\gamma2, x-\frac{\sin^2\gamma}{8}].
    \end{cases}
\end{equation}

 For $z\in \Gamma_{(1,x)}^+=[\cos\gamma+\frac{\sin^2\gamma}{8}, 1-\varepsilon]$,  it follows from Tab. \ref{Tab:argfw} that $\arg w\in (\frac{\pi}{2},\pi)$.  Applying equation \eqref{Eq:wz2} and Remark \ref{rem:log}, we obtain
\begin{align}\label{Eq:w2zx1}
 w^2(z)=\Big(\log^2\frac{2(z-\cos\gamma)}{1-z^2}+(\pi-\gamma)^2\Big)^{\frac12}
e^{i\arg w^2(z)},
\end{align}
where the argument of $w^2(z)$ is represented by
\begin{equation}\label{Eq:argwx1}
   \arg w^2(z)= 
    \begin{cases}
       \arctan\frac{-(\pi-\gamma)}{\log \frac{2(z-\cos\gamma)}{1-z^2}}+\pi ~~~~~~~&z\in[\cos\gamma+\frac{\sin^2\gamma}{8}, -1+2\cos\frac{\gamma}2);\\
        \frac{3\pi}{2} &z=-1+2\cos\frac{\gamma}2;\\
      \arctan\frac{-(\pi-\gamma)}{\log \frac{2(z-\cos\gamma)}{1-z^2}}+2\pi &z\in(-1+2\cos\frac{\gamma}2, 1-\varepsilon].
    \end{cases}
\end{equation}

Similarly, for $z\in \Gamma^+_{(-1,o)}=(-4e+\cos\gamma, -1-\varepsilon)e^{i\pi}$, Tab. \ref{Tab:argfw} gives that $\arg w\in (-\frac{\pi}{2}, 0)$. 
Equation \eqref{Eq:wz2} and Remark \ref{rem:log}  yield that we can represent $w^2(z)$ as in equation \eqref{Eq:w2zx1}, but $ \arg w^2(z) $ is given by 
\begin{equation}\label{Eq:argw-1o}
   \arg w^2(z)= 
    \begin{cases}
        \arctan\frac{(\pi-\gamma)}{\log \frac{z^2-1}{2(\cos\gamma-z)}}-\pi ~~~~~~~&z\in[-4e+\cos\gamma, -1-2\cos\frac{\gamma}{2});\\
        -\frac{\pi}{2} &z=-1-2\cos\frac{\gamma}{2};\\
      \arctan\frac{(\pi-\gamma)}{\log \frac{z^2-1}{2(\cos\gamma-z)}} &z\in(-1-2\cos\frac{\gamma}{2}, -1-\varepsilon].
    \end{cases}
\end{equation}

1.    Given $0<\gamma\leqslant \frac{\pi}{2}$, equations \eqref{Eq:w21x} and \eqref{Eq:w2zx1} imply inequality \eqref{Eq:wz2o-1} for $z\in \Gamma_{(o,1)}^+\cup \Gamma^+_{(x,-1)}\cup \Gamma_{(1,x)}^+\cup \Gamma^+_{(-1,o)}$.

2. Since $z\in\Gamma_{(o,1)}^{+,l}=[1+\varepsilon,  1+\sqrt{1-\cos\gamma}]^+$, we have $\log\frac{z^2-1}{2(z-\cos\gamma)}<0$. From Remark \ref{rem:log},  $\big|\log\frac{z^2-1}{2(z-\cos\gamma)}\big|>0$  decreases in this interval. It follows from
\begin{equation}\label{Eq:log1+r}
    \log(1+r)\geqslant \frac{r}{2}
\end{equation}
for $1>r\geqslant0$, 
we obtain
\begin{align*}
\big|\log\frac{z^2-1}{2(z-\cos\gamma)}\big|\geqslant \big|\log \frac{\sqrt{1-\cos\gamma}(2+\sqrt{1-\cos\gamma})}{2(\sqrt{1-\cos\gamma}+1-\cos\gamma)}\big|= \log \frac{2(1+\sqrt{1-\cos\gamma})}{2+\sqrt{1-\cos\gamma}}\geqslant \frac{\sqrt{1-\cos\gamma}}{6}.
\end{align*}
Recalling that $1-\cos\gamma\geqslant \frac{\gamma^2}{4}$ for $\gamma\in (0,\frac{\pi}{2})$, it follows that
\[
0<\Big|\frac{\gamma}{\log \frac{z^2-1}{2(z-\cos\gamma)}}\Big|\leqslant  12.
\]
For any  $t\in (0,b]$ with $b>0$, we have 
\begin{align}\label{Eq:arctan}
    \arctan t-\frac{t}{1+b^2}\geqslant 0. 
\end{align}
Setting $t=\Big|\frac{\gamma}{\log \frac{z^2-1}{2(z-\cos\gamma)}}\Big|$  and $b=12$ , we obtain 
\begin{align}\label{Eq:artano1}
  \arctan\Big|\frac{\gamma}{\log \frac{z^2-1}{2(z-\cos\gamma)}}\Big|\geqslant \Big|\frac{\gamma}{145\log \frac{z^2-1}{2(z-\cos\gamma)}}\Big|.   
\end{align}
In addition, for $ ~0<\tau<1 ~{ \rm and }~ 0<t<1$, we can easily check that
\begin{align}\label{Eq:logt}
   \frac{1}{\sqrt{ |\log t|}}> \sqrt{\tau}\,t^{\frac\tau 2}.
\end{align}
Since $|\frac12\arctan\frac{\gamma}{\log \frac{z^2-1}{2(\cos\gamma-z)}}|\leqslant \frac{\pi}4$ and $\Gamma_{(o,1)}^{+,l}\subset [-1+\varepsilon,  1-2\sin\frac\gamma2)$, it follows from equation \eqref{Eq:w21x} and the first equality in equation \eqref{Eq:argwo1}  that  
\begin{align*}
|\Im w(z)|=&\Big|\Big(\log^2 \frac{z^2-1}{2(z-\cos\gamma)}+\gamma^2\Big)^{\frac14}\sin\big(\frac12\arctan\frac{\gamma}{\log\frac{z^2-1}{2(z-\cos\gamma)}}\big)\Big|.
\end{align*}
From inequalities \eqref{Eq:singa} and \eqref{Eq:artano1}, respectively, it follows that
\begin{align*}
|\Im w(z)| 
\geqslant |\log\frac{z^2-1}{2(z-\cos\gamma)}|^{\frac12}\frac14 \arctan\Big|\frac{\gamma}{\log \frac{z^2-1}{2(z-\cos\gamma)}}\Big|  
\geqslant  |\log\frac{z^2-1}{2(z-\cos\gamma)}|^{\frac12}\big(\frac{\gamma}{580\log\frac{z^2-1}{2(z-\cos\gamma)}}\big).
\end{align*}
Given that $0< \frac{z^2-1}{2(z-\cos\gamma)}<1$, by inequality \eqref{Eq:logt} with $t=\frac{z^2-1}{2(z-\cos\gamma)}$ and $\tau=\alpha+1$, inequality \eqref{Eq:imwz1o1} holds for $\Gamma_{(o,1)}^{+,l}$.

 As $z\in \Gamma_{(1,x)}^{+,r}=[\cos\frac{\gamma}{2}, 1-\varepsilon ]$,
Remark \ref{rem:log} implies that  $\log \frac{2(z-\cos\gamma)}{1-z^2} $ increases in this region.  The 
 following equalizes 
\begin{align}\label{Eq:sincos2an}
  \frac{2(\cos\frac{\gamma}{2}-\cos\gamma)}{\sin^2\frac{\gamma}{2}}=\frac{2(\cos\frac{\gamma}{2}-\cos^2\frac{\gamma}{2}+\sin^2\frac{\gamma}{2})}{\sin^2\frac{\gamma}{2}}=\frac{4\cos\frac{\gamma}2\sin^2\frac{\gamma}{4}}{4\sin^2\frac{\gamma}{4}\cos^2\frac{\gamma}{4}}+2=2+\frac{\cos\frac{\gamma}2}{\cos^2\frac{\gamma}{4}}.
\end{align}
yields that 
\begin{align*}
\log \frac{2(z-\cos\gamma)}{1-z^2}\geqslant \log \frac{2(\cos\frac{\gamma}{2}-\cos\gamma)}{\sin^2\frac{\gamma}{2}}= \log (2+\frac{\cos\frac{\gamma}2}{\cos^2\frac{\gamma}{4}})>\log 2>\frac{1}{2}.
\end{align*}
This implies that 
\[
\Big|\frac{\pi-\gamma}{\log \frac{2(z-\cos\gamma)}{1-z^2}}\Big|\leqslant  2(\pi-\gamma).
\]
 In this case,  from inequality \eqref{Eq:arctan} with $t=|\frac{\pi-\gamma}{\log \frac{2(z-\cos\gamma)}{1-z^2}}\Big| $ and  $b=2(\pi-\gamma)$, we have 
\begin{align}\label{Eq:artanx1}
  \arctan\Big|\frac{\pi-\gamma}{\log \frac{2(z-\cos\gamma)}{1-z^2}}\Big|\geqslant \Big|\frac{\pi-\gamma}{(1+4(\pi-\gamma)^2)\log \frac{2(z-\cos\gamma)}{1-z^2}}\Big|\geqslant \Big|\frac{1}{5(\pi-\gamma)\log \frac{2(z-\cos\gamma)}{1-z^2}}\Big|.   
\end{align}
% In addition, we have 
% \begin{align}\label{Eq:logt>1}
%     \frac{1}{\sqrt{\tau\log t}}\geqslant t^{-\frac \tau 2}, \qquad \forall ~0<\tau<1 ~{ \rm and }~ t>1.
% \end{align}
Since $|\frac12\arctan\frac{(\pi-\gamma)}{\log \frac{2(z-\cos\gamma)}{1-z^2}}|\leqslant \frac{\pi}4$ and $ \Gamma_{(1,x)}^{+,r}\subset (-1-2\cos\frac{\gamma}{2}, -1-\varepsilon]$,  it follows from equations \eqref{Eq:w2zx1} and the third formula of equation \eqref{Eq:argwx1}  that 
\begin{align*}
|\Im w(z)|=&\Big|\Big(\log^2\frac{2(z-\cos\gamma)}{1-z^2}+(\pi-\gamma)^2\Big)^{\frac14}\sin\big(\frac12\arctan\frac{\pi-\gamma}{\log \frac{2(z-\cos\gamma)}{1-z^2}}\big)\Big|.
\end{align*}
The inequalities \eqref{Eq:singa} and \eqref{Eq:artanx1} yield respectively, that
\begin{align*}
|\Im w(z)|
\geqslant\left|\log \frac{2(z-\cos\gamma)}{1-z^2}\right|^{\frac12}\frac14\arctan\frac{\pi-\gamma}{\log \frac{2(z-\cos\gamma)}{1-z^2}}
\geqslant  \left|\log\frac{2(z-\cos\gamma)}{1-z^2}\right|^{\frac12}\big(\frac{1}{20(\pi-\gamma)\log \frac{2(z-\cos\gamma)}{1-z^2}}\big).
\end{align*}
Given that $0<\frac{1-z^2}{2(z-\cos\gamma)}<1$, applying inequality \eqref{Eq:logt} with $t=\frac{z^2-1}{2(\cos\gamma-z)}$ and $\tau=\alpha+1$, we have 
\begin{equation*}
|\Im w(z)|
\geqslant \frac{\sqrt{\alpha+1}}{20(\pi-\gamma)} \Big|\frac{z+1}{2(z-\cos\gamma)}\Big|^{\frac{\alpha+1}2} |z-1|^{\frac{\alpha+1}2}.
\end{equation*}
Recalling that $0<\gamma\leqslant \frac{\pi}{2}$, we have 
\[
 \frac{\gamma\sqrt{\alpha+1}}{580} < \frac{\sqrt{\alpha+1}}{20(\pi-\gamma)}. 
\]
Thus, inequality \eqref{Eq:imwz1o1} holds for $\Gamma_{(1,x)}^{+,r}$.
% \[
% \iota_{(1,o)}=1+\Big(\frac{\gamma}{\log\frac{2(1+\sqrt{1-\cos\gamma})}{2+\sqrt{1-\cos\gamma}}}\Big)^2<\frac{37\pi^2}{1-\cos\gamma}.
% \]
 
3. As $z\in \Gamma_{(-1,o)}^{+,r}=(-1-\sqrt{\cos\gamma+1}, -1-\varepsilon]$, we have $\log\frac{z^2-1}{2(\cos\gamma-z)}<0$. Remark \ref{rem:log} implies that $\big|\log\frac{z^2-1}{2(\cos\gamma-z)}\big|>0$ increases in this region. By inequality \eqref{Eq:log1+r}, a direct calculation shows that
\begin{align*}
\big|\log \frac{z^2-1}{2(\cos\gamma-z)}\big|\geqslant \big|\log \frac{(2+\sqrt{\cos\gamma+1})\sqrt{\cos\gamma+1}}{2(1+\sqrt{\cos\gamma+1}+\cos\gamma)}\big|= \log \frac{2(1+\sqrt{\cos\gamma+1})}{2+\sqrt{\cos\gamma+1}}\geqslant\frac{1}{6}.
\end{align*}
This yields that
\[
\Big|\frac{(\pi-\gamma)}{\log \frac{z^2-1}{2(\cos\gamma-z)}}\Big|\leqslant  6(\pi-\gamma).
\]
% Given that 
% \begin{align}\label{Eq:arctan}
%     \arctan t-\frac{t}{1+ 36(\pi-\gamma)^2}\geqslant 0, \qquad \forall t\in [0,36(\pi-\gamma)^2],
% \end{align}
 In this case, recalling  $0<\gamma<\frac{\pi}2$ and applying inequality \eqref{Eq:arctan} with $t=\Big|\frac{(\pi-\gamma)}{\log \frac{z^2-1}{2(\cos\gamma-z)}}\Big|$ and $b= 6(\pi-\gamma)$, we have 
\begin{align}\label{Eq:artan-1o}
  \arctan\Big|\frac{(\pi-\gamma)}{\log \frac{z^2-1}{2(\cos\gamma-z)}}\Big|\geqslant \Big|\frac{(\pi-\gamma)}{(1+36(\pi-\gamma)^2)\log \frac{z^2-1}{2(\cos\gamma-z)}}\Big|\geqslant \Big|\frac{1}{37(\pi-\gamma)\log \frac{z^2-1}{2(\cos\gamma-z)}}\Big|.   
\end{align}
% In addition, we have 
% \begin{align}\label{Eq:logt}
%     \frac1{\log t}> t^{\tau}, \qquad \forall ~0<\tau<1 ~{ \rm and }~ 0<t<1.
% \end{align}
Since $|\frac12\arctan\frac{(\pi-\gamma)}{\log \frac{z^2-1}{2(\cos\gamma-z)}}|\leqslant \frac{\pi}4$ and $\Gamma_{(-1,o)}^{+,r}\subset (-1-2\cos\frac{\gamma}{2}, -1-\varepsilon] $ , it follows from  equation \eqref{Eq:w2zx1} and the third formula of equation \eqref{Eq:argw-1o} that 
\begin{align*}
|\Im w(z)|=&\Big|\Big(\log^2\frac{2(z-\cos\gamma)}{1-z^2}+(\pi-\gamma)^2\Big)^{\frac14}\sin\big(\frac12\arctan\frac{\pi-\gamma}{\log \frac{2(z-\cos\gamma)}{1-z^2}}\big)\Big|.
\end{align*}
The inequalities \eqref{Eq:singa} and \eqref{Eq:artan-1o} give
\begin{align*}
|\Im w(z)|
\geqslant& |\log \frac{z^2-1}{2(\cos\gamma-z)}|^{\frac12}\frac14\arctan\Big|\frac{(\pi-\gamma)}{\log \frac{z^2-1}{2(\cos\gamma-z)}}\Big| 
\geqslant  |\log \frac{z^2-1}{2(\cos\gamma-z)}|^{\frac12}\Big|\frac{1}{148(\pi-\gamma)\log \frac{z^2-1}{2(\cos\gamma-z)}}\Big|.
\end{align*}
Since $0< \frac{z^2-1}{2(\cos\gamma-z)}<1$, applying inequality \eqref{Eq:logt} with $t=\frac{z^2-1}{2(\cos\gamma-z)}$ and $\tau=\beta+1$, we have 
\begin{equation}\label{Eq:imwz-1or} 
|\Im w(z)|\geqslant \frac{\sqrt{\beta+1}}{148(\pi-\gamma)} \Big|\frac{z-1}{2(z-\cos\gamma)}\Big|^{\frac{\beta+1}2} |z+1|^{\frac{\beta+1}2}
%\geqslant \frac{\gamma\sqrt{\beta+1}}{74 \pi(\pi-\gamma)} \Big|\frac{z-1}{2(z-\cos\gamma)}\Big|^{\frac{\beta+1}2} |z+1|^{\frac{\beta+1}2}.
\end{equation}

 As $z\in \Gamma_{(x,-1)}^{+,l}=[-1+\varepsilon, 1-\sqrt 3]$, Remark \ref{rem:log} implies that $\log\frac{2(z-\cos\gamma)}{z^2-1}>0$ decreases in this region, and we have
\begin{align*}
\log \frac{2(z-\cos\gamma)}{z^2-1}\geqslant  \log \frac{2(\cos\gamma-1+\sqrt 3)}{1-(1-\sqrt3)^2}\geqslant \log \frac{(2\sqrt{3}-2)(2\sqrt{3}+3)}{3}>1.
\end{align*}
This implies that 
\[
\Big|\frac{\gamma}{\log \frac{2(z-\cos\gamma)}{z^2-1}}\Big|\leqslant  \gamma.
\]
 In this case, using  inequality \eqref{Eq:arctan} with $t=\Big|\frac{\gamma}{\log \frac{2(z-\cos\gamma)}{z^2-1}}\Big|$ and $b=\gamma$, we have 
\begin{align}\label{Eq:artan-1x}
  \arctan\Big|\frac{\gamma}{\log \frac{2(z-\cos\gamma)}{z^2-1}}\Big|\geqslant \Big|\frac{\gamma}{(1+\gamma^2)\log \frac{2(z-\cos\gamma)}{z^2-1}}\Big|.   
\end{align}
% In addition, we have 
% \begin{align}\label{Eq:logt>1}
%     \frac{1}{\sqrt{\tau\log t}}\geqslant t^{-\frac \tau 2}, \qquad \forall ~0<\tau<1 ~{ \rm and }~ t>1.
% \end{align}
Since $|\frac12\arctan\frac{\gamma}{\log \frac{z^2-1}{2(z-\cos\gamma)}}|\leqslant \frac{\pi}4$ and $\Gamma_{(x,-1)}^{+,l}\subset [-1+\varepsilon,  1-2\sin\frac\gamma2)$, it follows from equation \eqref{Eq:w21x} and the first formula in equation \eqref{Eq:argw-1x} that 
\begin{align*}
|\Im w(z)|
=&\Big|\Big(\log^2\frac{2(z-\cos\gamma)}{z^2-1}+\gamma^2\Big)^{\frac14}\sin\big(\frac12\arctan\frac{\gamma}{\log \frac{2(z-\cos\gamma)}{z^2-1}}\big)\Big|.
\end{align*}
Furthermore, it follows from inequalities \eqref{Eq:singa} and \eqref{Eq:artan-1x} respectively, that
\begin{align*}
|\Im w(z)|
\geqslant& |\log \frac{2(z-\cos\gamma)}{z^2-1}|^{\frac12}\frac14\Big|\arctan\frac{\gamma}{\log \frac{2(z-\cos\gamma)}{z^2-1}}\Big|
\geqslant  |\log \frac {z^2-1}{2(z-\cos\gamma)}|^{\frac12}\big(\frac{\gamma}{4(1+\gamma^2)|\log \frac {z^2-1}{2(z-\cos\gamma)}|}\big).
\end{align*}
Given that $0<\frac{z^2-1}{2(z-\cos\gamma)}<1$, by  inequality \eqref{Eq:logt} with $t=\frac{z^2-1}{2(z-\cos\gamma)}$  and $\tau=\beta+1$, we obtain
\begin{equation}\label{Eq:imwzx-1l} 
|\Im w(z)|
\geqslant \frac{\gamma\sqrt{\beta+1}}{4(1+\gamma^2)} \Big|\frac{z-1}{2(z-\cos\gamma)}\Big|^{\frac{\beta+1}2} |z+1|^{\frac{\beta+1}2}.
\end{equation}
Given $0<\gamma\leqslant \frac{\pi}{2}$, we have 
\[
\min\left\{ \frac{\sqrt{\beta+1}}{148 (\pi-\gamma)}, \frac{\gamma\sqrt{\beta+1}}{4(1+\gamma^2)} 
\right \}\geqslant \frac{\gamma\sqrt{\beta+1}}{74 \pi(\pi-\gamma)}.
\]
Therefore, for $z\in \Gamma_{(-1,o)}^{+,r}\cup \Gamma_{(x,-1)}^{+,l}$,  the inequalities \eqref{Eq:imwz-1or} and \eqref{Eq:imwzx-1l} imply \eqref{Eq:imwz2-1}.

\subsection{Proof of Lemma \ref{Le:imw}}\label{APP:L8}

Set $w(z)=\mu+i\nu$.
For $z\in \Gamma_{(o,1)}^+$ or  $z\in \Gamma_{(x,-1)}^+$,  it follows from equation \eqref{Eq:wz2} that 
\begin{equation*}\label{Eq:wML}
\begin{split}
\mu+i\nu
&=\Big[-\log\frac{z^2-1}{2(z-\cos\gamma)}+\gamma i\Big]^{1/2} .
\end{split}
\end{equation*}
Since the imaginary parts of the two sides are equal, we have 
\[
\mu =\frac{\gamma}{2\nu} \qquad \text{ and } \qquad \mu^2-\nu^2=-\log\frac{z^2-1}{2(z-\cos\gamma)}.
\]
By a simple calculation, we obtain 
\begin{align}\label{Eq:nu1o}
\nu^2=\frac{\log\frac{z^2-1}{2(z-\cos\gamma)}+\sqrt{\log^2\frac{z^2-1}{2(z-\cos\gamma)}+\gamma^2}}2.
\end{align}

For $z \in\Gamma_{(o,1)}^{+,r} = (1+\sqrt{1-\cos\gamma}, \cos\gamma+4e]^+$ or $z \in\Gamma_{(x,-1)}^{+,r}=[1-\sqrt 3, \cos\gamma-\frac{\sin^2\gamma}{8}]^+$, by Remark \ref{rem:log}, $\log \frac{z^2-1}{2(z-\cos\gamma)}$ is increasing, and
 we have
\begin{equation}\label{Eq:log1o}
\log\frac{z^2-1}{2(z-\cos\gamma)}\geqslant\min\{\log\frac{2+\sqrt{1-\cos\gamma}}{2(1+\sqrt{1-\cos\gamma})}, \log \frac{1-(1-\sqrt3)^2}{2(\cos\gamma-1+\sqrt 3)}\}> \log \frac{2\sqrt{3}-3}{2\sqrt{3}}>-3.
\end{equation}
It follows from equation \eqref{Eq:nu1o} that $|\Im w(z)|^2=\nu^2$ is increasing with respect to $\log\frac{z^2-1}{2(z-\cos\gamma)}$.
Therefore, inequality \eqref{Eq:log1o} 
implies that 
\[
\nu^2\geqslant \frac{-3+\sqrt{3^2+\gamma^2}}2.
\]
For $0<\gamma\leqslant \frac{\pi}2$, using  the inequality 
\[\gamma^2\geqslant\Big(\frac{\gamma^2}{6\pi }\Big)^2+\frac{\gamma^2}{\pi} ,\]
 inequality \eqref{Eq:imwz-1o2} holds for $z \in \Gamma_{(o,1)}^{+,r}\cup\Gamma_{(x,-1)}^{+,r}$.

 Similarly,  when $z\in\Gamma_{(1,x)}^{+}\cup \Gamma_{(-1,o)}^{+}$, it follows from equation \eqref{Eq:wz2} that 
\[
\mu =\frac{\gamma-\pi}{2\nu}
\]
and 
\[
\mu^2-\nu^2=-\log\frac{z^2-1}{2(\cos\gamma-z)}.
\]
Using the last two equations, one can show
\begin{align}\label{Eq:nuo-1}
\nu^2=\frac{\log\frac{z^2-1}{2(\cos\gamma-z)}+\sqrt{\log^2\frac{z^2-1}{2(\cos\gamma-z)}+(\pi-\gamma)^2}}2.
\end{align}

For $z\in\Gamma_{(-1,o)}^{+,l}= [-4e+\cos\gamma, -1-\sqrt{\cos\gamma+1}]^+ $ or $z \in\Gamma_{(1,x)}^{+,l}=[\cos\gamma+\frac{\sin^2\gamma}{8},\cos\frac{\gamma}{2} ]$,  
we recall from Remark \ref{rem:log} that $\log \frac{z^2-1}{2(\cos\gamma-z)}$ is decreasing in these regions respectively. Thus,
\begin{equation}\label{Eq:logo1}
\log \frac{z^2-1}{2(\cos\gamma-z)}\geqslant\min\{ -\log \frac{2(1+\sqrt{\cos\gamma+1})}{2+\sqrt{\cos\gamma+1}}, -\log\Big(2+\frac{\cos\frac{\gamma}2}{\cos^2\frac{\gamma}{4}}\Big)\} \geqslant-2.
\end{equation}
Here, we have used equation \eqref{Eq:sincos2an} to calculate $-\log  \frac{2(\cos\frac{\gamma}{2}-\cos\gamma)}{\sin^2\frac{\gamma}{2}}$ . 
From equation \eqref{Eq:nuo-1}, it follows that $|\Im w(z)|^2=\nu^2$ is increasing with respect to $\log\frac{z^2-1}{2(\cos\gamma-z)}$. 
Therefore,  inequality \eqref{Eq:logo1} yields that 
\[
\nu^2\geqslant \frac{-2+\sqrt{(-2)^2+(\pi-\gamma)^2}}2.
\]
For $0<\gamma\leqslant \frac{\pi}2$, using inequality 
\[
(\pi-\gamma)^2\geqslant\Big(\frac{(\pi-\gamma)^2}{2\pi }\Big)^2+\frac{2(\pi-\gamma)^2}{\pi},
\]
we have
\begin{align*}
 |v|\geqslant\frac{\pi-\gamma}{2\sqrt{\pi}}
>\frac{\gamma}{2\sqrt{ 3\pi}}.
\end{align*}
This implies the inequality \eqref{Eq:imwz-1o2} for $z\in \Gamma_{(-1,o)}^{+,l}\cup \Gamma_{(1,x)}^{+,l}$.

\subsection{Proof of Lemma \ref{Le:g2}}\label{APP:L9}

1. As $z\in  \Gamma_{-1}^+$, we have $2-\varepsilon \leqslant |z-1|\leqslant 2+\varepsilon$ and $|z-\cos\gamma|\geqslant1+\cos\gamma -\varepsilon$.
Furthermore, we have
\[
\lim_{\varepsilon\to 0}\max\{(2-\varepsilon)^{\alpha}, (2+\varepsilon)^{\alpha}\}<1.\] 
Thus, inequalities in \eqref{Eq:gzGa-1-x} hold immediately.

2. Similarly, we have the inequalities in \eqref{Eq:gzGa1-x}. 

3.
 We only need to check the last two formulas in \eqref{Eq:Gaoz-x1-1}. Since $4e+\cos\gamma-1\leqslant |z-1|\leqslant 4e-\cos\gamma+1$  and $ 4e-\cos\gamma-1\leqslant |z+1|\leqslant 4e+\cos\gamma+1$ for  $z\in \Gamma_{o}^+$, recalling  $-1<\alpha\leqslant 0$, and $\beta\leqslant \alpha$, we have 
 \begin{align*}
\max \Big\{\Big(4e+\cos\gamma-1\Big)^{\alpha}, \Big(4e-\cos\gamma+1\Big)^{\alpha}, \Big(4e-\cos\gamma-1\Big)^{\beta},\Big(4e+\cos\gamma+1\Big)^{\beta}\Big\}<1.\end{align*}
Therefore,  \eqref{Eq:Gaoz-x1-1} holds. 

4. For $z\in \Gamma_x^+$, we also only need to check the last two formulas in \eqref{Eq:Gaxz-x1-1}. 
 Since $1-\cos\gamma-\frac{\sin^2\gamma}{8}\leqslant |z-1|\leqslant 1-\cos\gamma+\frac{\sin^2\gamma}{8}$  and $ 1+\cos\gamma-\frac{\sin^2\gamma}{8}\leqslant |z+1|\leqslant 1+\cos\gamma+\frac{\sin^2\gamma}{8}$ in this half circle, using the inequality \eqref{Eq:singa},
we have
\[
1-\cos\gamma-\frac{\sin^2\gamma}{8}= \frac{\sin^2\gamma}{2\cos^2\frac{\gamma}{2}}-\frac{\sin^2\gamma}{8}\geqslant \frac{\sin^2\gamma}3\geqslant\frac{\gamma^2}{12}.
\]
Recalling that $-1<\alpha\leqslant 0$,  $\beta\leqslant \alpha$, we have 
 \begin{align*}
\max \Big\{\Big(1-\cos\gamma-\frac{\sin^2\gamma}{8}\Big)^{\alpha}, \Big(1-\cos\gamma+\frac{\sin^2\gamma}{8}\Big)^{\alpha}\Big\}\leqslant 12 \gamma^{2\alpha}
 \end{align*}
 and 
 \begin{align*}
 \max \Big\{\Big(1+\cos\gamma-\frac{\sin^2\gamma}{8}\Big)^{\beta},\Big(1+\cos\gamma+\frac{\sin^2\gamma}{8}\Big)^{\beta}\Big\}\leqslant2.
 \end{align*}
Therefore, \eqref{Eq:Gaxz-x1-1} holds.

\end{appendices}

\end{document}